\documentclass[]{article}
\usepackage[a4paper, margin=3cm]{geometry}
\usepackage{amsmath}
\usepackage{amssymb}
\usepackage{amsthm}
\theoremstyle{plain}
\newtheorem{theorem}{Theorem}[section]
\newtheorem{lemma}[theorem]{Lemma}
\newtheorem{proposition}[theorem]{Proposition}

\theoremstyle{definition}
\newtheorem{assumption}[theorem]{Assumption}
\newtheorem{definition}[theorem]{Definition}
\newtheorem{example}[theorem]{Example}

\theoremstyle{remark}
\newtheorem{remark}[theorem]{Remark}

\usepackage{epstopdf}
\usepackage{makecell,multirow,diagbox,rotating,tabularx}
\usepackage{color}
\usepackage{booktabs}
\usepackage{algorithm}
\usepackage{algorithmic}
\usepackage{url}
\usepackage{cite}
\usepackage[hidelinks]{hyperref}
\usepackage{authblk} 
\usepackage{cleveref}
\RequirePackage{bm}
\usepackage{tikz}
\newcommand{\alglabel}[1]{\textcolor{gray}{\textit{$\triangleright$~#1}}}

\newcommand{\email}[1]{\href{mailto:#1}{#1}}

\newenvironment{keywords}{\paragraph{Keywords:}}{}

\title{Limiting Stationarity of Regularized Gap-Function Reformulations for Bilevel Optimization with Unbounded Multipliers
}

\author{
	Xiaoning Bai\thanks{Department of Mathematics, Southern University of Science and Technology, Shenzhen 518055, People's Republic of China. (\email{12331002@mail.sustech.edu.cn}).} \quad
	Shangzhi Zeng\thanks{National Center for Applied Mathematics Shenzhen, and Department of Mathematics, Southern University of Science and Technology, Shenzhen 518055, People's Republic of China. (\email{zengsz@sustech.edu.cn}).} \quad
	Jin Zhang\thanks{Corresponding author. Department of Mathematics, and National Center for Applied Mathematics Shenzhen, Southern University of
		Science and Technology, Shenzhen 518055, People's Republic of China. (\email{zhangj9@sustech.edu.cn}).}
}

\date{}

\begin{document}
	
\maketitle

\begin{abstract}
	Value-function-type reformulations have generated a broad class of methods for bilevel optimization. However, the corresponding value-function-type constraints are inherently degenerate and generally fail to satisfy standard constraint qualifications, so the associated multiplier sequences may be unbounded and bounded-multiplier convergence analyses become inapplicable. We study this issue for the regularized gap-function reformulation of bilevel problems with constrained convex lower-level programs. We prove that accumulation points of approximate stationary sequences are C-stationary for the corresponding Karush--Kuhn--Tucker-based mathematical program with complementarity constraints (MPCC), even when the multiplier sequence associated with the regularized gap-function constraint is unbounded. The result holds under Mangasarian--Fromovitz constraint qualification (MFCQ) for the upper- and lower-level constraint systems and MPCC-MFCQ at the limiting MPCC point, without any constraint qualification on the regularized gap-function constraint itself. We further provide an example showing that approximate stationary points of the standard regularized gap-function reformulation may converge to a point that is C-stationary but not M-stationary. To guarantee M-stationarity, we introduce a slack-based two-parameter penalty formulation preserving exact multiplier--slack complementarity and establish M-stationarity under a domination condition on the penalty parameters.  We develop an inexact slack-penalty method with adaptive penalty updates and feasibility correction, whose accumulation points are M-stationary under the stated assumptions.	
\end{abstract}

\begin{keywords}
	Bilevel optimization, Stationary condition,  Mathematical programs with complementarity constraints, Penalty methods, Unbounded penalty
\end{keywords}

\section{Introduction}\label{sec1}

Bilevel optimization models hierarchical decision processes in which an upper-level decision is constrained by the solution set of a lower-level problem, with applications in operations research, economics, machine learning, and data-driven decision making \cite{bard2013practical,dempe2002foundations,dempe2020bilevel,colson2007overview,franceschi2018bilevel}.

In this paper, we consider the bilevel optimization problem
\begin{equation}\label{BP}
	\begin{aligned}
		(\text{BP}) \qquad \min_{x \in \mathbb{R}^n, y \in \mathbb{R}^m} \  & F(x, y) \\
		\text{s.t.}\quad\ \ \, &y \in S(x), \; G(x,y) \leq 0,
	\end{aligned}
\end{equation}
where \(S(x)\) denotes the solution set of the lower-level problem 
\begin{equation}\label{LL}
	\begin{aligned}
		(P_x) \qquad \min_{y \in \mathbb R^m} \ & f(x,y) \\
		\mathrm{s.t.} \,\ 
		&g(x,y)\le 0,
	\end{aligned}
\end{equation}	
Here \(F:\mathbb R^n\times\mathbb R^m\to\mathbb R\), \(f:\mathbb R^n\times\mathbb R^m\to\mathbb R\), \(G=(G_1,\ldots,G_p):\mathbb R^n\times\mathbb R^m\to\mathbb R^p\) and \(g=(g_1,\ldots,g_q):\mathbb R^n\times\mathbb R^m\to\mathbb R^q\). We denote the upper-level constraint set by $\Omega:=\{(x,y)\in\mathbb R^n\times\mathbb R^m\mid G(x,y)\le0\}$. The feasible region of the upper-level variable is $X:=\{x\in\mathbb R^n\mid \exists\ y\in\mathbb R^m\ {\rm s.t.}\ (x,y)\in\Omega\}$, and $\Gamma(x):=\{y\in\mathbb R^m\mid g(x,y)\le0\}$ is the feasible region of the lower-level problem.

The principal difficulty lies in the implicit solution-set constraint
\(y\in S(x)\), which may render the bilevel feasible region nonsmooth,
nonconvex, and difficult to characterize even when all problem data are smooth.
A classical approach is to replace the implicit constraint \(y\in S(x)\) by the Karush--Kuhn--Tucker (KKT) system, yielding the following single-level reformulation:
\begin{equation}\label{eq:kkt-reform}
	\begin{aligned}
		\min_{x \in\mathbb R^n,\,y\in \mathbb R^m,\,z\in\mathbb R^{q}}
		\quad & F(x,y) \\
		\mathrm{s.t.}\qquad\quad \ \,
		& G(x,y) \le 0, \ \nabla_y \mathcal L(x,y,z)=0, \\
		& 0 \le z\;\perp\; -g(x,y) \ge 0,
	\end{aligned}
\end{equation}
where $\mathcal L(x,y,z):=f(x,y)+z^\top g(x,y)$ is the lower-level Lagrangian, and \(z\in\mathbb R_+^q\) is a lower-level multiplier. This problem is a mathematical program with complementarity constraints (MPCC); see, e.g., \cite{luo1996mathematical}. 

Under suitable convexity-type assumptions on the lower-level problem and some constraint qualifications, the MPCC reformulation is equivalent to the original bilevel problem \cite{dempe2012bilevel}. To address the failure of standard constraint qualifications caused by complementarity constraints, 
MPCC-tailored constraint qualifications and stationarity conditions have been developed; see, e.g., \cite{scheel2000mathematical,ye1999optimality,jane2005necessary,flegel2005m,kanzow2013new,dempe2012karush,kaming2025approximate}.

Recently, a variety of algorithms have been proposed for solving bilevel programs based on their MPCC reformulations \cite{hu2023improved,li2022bilevel,bennett2008bilevel,dempe2020bilevel,dempe2025duality}. Although the MPCC reformulation is natural, directly handling the full KKT system can be computationally demanding from an algorithmic perspective, particularly in large-scale settings. Indeed, linearizing the lower-level stationarity equation requires differentiating \(\nabla_y\mathcal L\), and hence evaluating second-order derivatives of the lower-level Lagrangian. 

Motivated by these considerations,  value-function-type reformulations have received increasing attention in the design of bilevel algorithms. The classical value-function reformulation, studied in the early works \cite{outrata1990numerical,ye1995optimality}, replaces the implicit constraint \(y\in S(x)\) by lower-level feasibility together with a value-function inequality, leading to
\[
	\min_{x\in\mathbb R^n,y\in\mathbb R^m}
		\quad F(x,y) \quad
		\mathrm{s.t.}\ G(x,y)\le 0,\ g(x,y)\le 0,\ f(x,y)-v(x)\le 0.
\]
Here, $v(x):=\inf_{u\in\mathbb R^m}\{f(x,u)\mid g(x,u)\le0\}$ is the value function of the lower-level problem. Since \(f(x,y)\ge v(x)\) for every \(y\in\Gamma(x)\), the constraint \(f(x,y)-v(x)\le0\) exactly characterizes lower-level optimality.

The value-function reformulation and its variants have been studied from both theoretical and algorithmic perspectives. Theoretical work has examined their equivalence properties, constraint qualifications, and necessary optimality conditions \cite{dempe2013bilevel,ye2010new,bai2025optimality}. On the algorithmic side, recent developments initially focused on unconstrained lower-level models. Representative examples include deterministic value-function-based methods that approximate the lower-level value function through finitely many gradient steps \cite{liu2022bome,shen2023penalty,shen2025penalty,sow2022primal}, stochastic function-value-gap penalty methods \cite{kwon2023fully,kwon2024penalty}, and federated value-function methods \cite{yang2025first,zhang2025federated}. Value-function and Moreau-envelope reformulations have also led to interior-point, difference-of-convex, inexact first-order, and smoothed stochastic primal--dual methods \cite{gao2022value,gao2024,liu2024moreau,bai2026alternating,lu2025tsp}.

For constrained lower-level programs, a number of algorithms exploit particular constraint structures, including equality constraints \cite{xiao2023alternating}, linear equality and inequality constraints \cite{khanduri2023linearly,kornowski2024first}, and strongly convex lower-level problems with coupled convex constraints \cite{jiang2024barrier}. General nonlinear lower-level constraints have been approached through hypergradient and stochastic-gradient methods \cite{shi2024double,giovannelli2025inexact}, penalty and minimax reformulations \cite{lu2024first}, proximal Lagrangian value functions \cite{yaoconstrained}, and augmented Lagrangian value-function reformulations \cite{nie2025augmented}. Regularized gap functions provide another value-function-type approach to general nonlinear lower-level constraints: they represent lower-level optimality through a regularized primal--dual gap. In particular, the regularized gap function introduced in \cite{yao2025overcoming} is defined as
\begin{equation}\label{eq:defG}
\mathcal G_\gamma(x,y,z):= \max_{\theta \in \mathbb R^{m}, \lambda \in\mathbb R_+^{q}} \Big\{
\mathcal{L}(x,y,\lambda) - \frac{1}{2\gamma_2}\| \lambda - z\|^2 - \mathcal{L}(x,\theta,z)- \frac{1}{2\gamma_1}\| \theta - y\|^2 \Big\},
\end{equation}
where \(z\in\mathbb R_+^q\), \(\gamma=(\gamma_1,\gamma_2)>0\), and \(\mathcal L(x,y,z):=f(x,y)+z^\top g(x,y)\) is the lower-level Lagrangian. 
Under the convexity assumptions and the existence of lower-level KKT multipliers, \(\mathcal G_\gamma(x,y,z)\ge0\), and \(\mathcal G_\gamma(x,y,z)=0\) is equivalent to \(y\in S(x)\) and \(z\in\mathcal M(x,y)\) \cite{yao2025overcoming}. The regularized gap function is continuously differentiable and encodes
lower-level feasibility and optimality through a single scalar constraint. This leads to the regularized gap-function reformulation
\begin{equation}\label{reformulation_GP_orig}
	\begin{aligned}
		\text{(GP)} \ \min_{x \in\mathbb R^n,\,y\in \mathbb R^m,\,z\in\mathbb R_+^{q}} \quad & F(x,y) \\
		\text{s.t.}\qquad\ \quad\, & \mathcal G_\gamma(x,y,z)\le0,\quad G(x,y)\le0,
	\end{aligned}
\end{equation}

Although value-function-type reformulations have led to many algorithms, these methods often solve penalized or relaxed problems, with penalty parameters tending to infinity or relaxation parameters tending to zero \cite{liu2024moreau,yao2025overcoming}. In standard nonlinear programming, suitable constraint qualifications often provide the multiplier boundedness needed to pass approximate first-order conditions to the limit. However, value-function-type constraints in bilevel optimization are intrinsically degenerate and generally fail to satisfy standard constraint qualifications \cite{bai2022directional}. Consequently, their associated multiplier sequences may be unbounded, in which case the conventional bounded-multiplier analysis is no longer applicable. For bilevel programs with affine constraints and linear lower-level problems, the challenge associated with unbounded multiplier sequences in the value-function reformulation has been addressed in \cite{mehlitz2023asymptotic}. In more general settings, however, the limiting stationarity information retained by approximate first-order sequences with unbounded multipliers remains insufficiently understood. In this paper, we address this issue for the regularized gap-function reformulation of bilevel problems with constrained convex lower-level programs. We show that, under suitable constraint qualifications for the original constraint systems and the associated KKT reformulation viewed as an MPCC, meaningful MPCC stationarity can still be recovered without imposing any constraint qualification involving the regularized gap-function constraint itself.

Against this background, we investigate the MPCC stationarity properties of
accumulation points of sequences satisfying approximate first-order conditions
for the regularized gap-function reformulation. The main contributions of this
paper are summarized as follows.
\begin{itemize}
	
		\item We formulate an approximate KKT condition that captures the common
	limiting first-order structure of penalty approximation of
	the regularized gap-function reformulation. Under Mangasarian--Fromovitz constraint qualification (MFCQ) for the upper- and
	lower-level constraint systems and MPCC-MFCQ at the limiting MPCC point,
	we prove that every accumulation point is C-stationary for the associated
	KKT-based MPCC reformulation, even when the multiplier sequence associated
	with the regularized gap-function constraint is unbounded.
	
		\item We construct an example showing that the preceding C-stationarity
	conclusion is generally sharp: stationary points of the standard
	penalty formulation may converge to a point that is C-stationary but
	not M-stationary for the associated MPCC. Motivated by this limitation, we
	introduce a slack-based two-parameter penalty formulation that preserves
	exact multiplier--slack complementarity. Under suitable constraint
	qualifications and a domination condition between the two penalty parameters, we prove that accumulation points satisfying the
	resulting approximate KKT condition are M-stationary for the associated
	KKT-based MPCC reformulation.

		\item We develop an inexact regularized gap-function bilevel slack-penalty
	method (iG-BSPM), which permits inexact solution of the proximal lower-level
	subproblem through an implementable residual criterion and incorporates
	adaptive penalty updates and feasibility correction. We prove that the
	 iterates generate an approximate KKT sequence for the
	slack-based formulation. Consequently, under the stated assumptions, every
	accumulation point is M-stationary for the associated
	MPCC reformulation  under MPCC-MFCQ.
\end{itemize}

This paper is organized as follows. Section~\ref{sec2} introduces the standing assumptions, basic properties of the regularized gap function, and the MPCC constraint qualifications and stationarity concepts used in the analysis. Section~\ref{sec3} introduces an approximate KKT condition for the regularized gap-function reformulation and establishes C-stationarity of its accumulation points. Section~\ref{sec4} introduces a slack-based two-parameter penalty formulation and its approximate KKT condition, and establishes M-stationarity of its accumulation points. Section~\ref{sec5} develops iG-BSPM and establishes its convergence
properties. The paper concludes with a summary of the main results.

\section{Preliminaries}\label{sec2}

\subsection{Notations and Basic Assumptions}

Let $\mathbb{R}^d$ denote the $d$-dimensional Euclidean space, and let $\mathbb{B}$ denote the closed unit ball centered at the origin. The nonnegative and positive orthants in $\mathbb{R}^d$ are denoted by $\mathbb{R}^d_+$ and $\mathbb{R}^d_{++}$, respectively. The standard inner product and Euclidean norm are denoted by $\langle \cdot, \cdot \rangle$ and $\|\cdot\|$, respectively.
For a vector \( w \in \mathbb{R}^d \), \([w]_+\) denotes the componentwise positive part, i.e., $([w]_+)_i=\max\{w_i,0\},\ i=1,\ldots,d$. For a vector \( w \in \mathbb{R}^d \) and a closed set \( D \subset \mathbb{R}^d \), the distance from \( w \) to \( D \) is defined as \(\mathrm{dist}(w, D) = \min_{u \in D} \|w - u\|\), the Euclidean projection operator onto \( D \) is denoted by \(\mathrm{Proj}_D(w):=\arg\min_{u\in D}\|u-w\|\). When \(D\) is nonconvex, \(\operatorname{Proj}_D(w)\) denotes an arbitrary selected element of the projection set whenever it is used as a point. The Cartesian product of two sets \( D_1 \) and \( D_2 \) is denoted by \( D_1 \times D_2 \). For a nonempty closed set \(D\), the Fr\'echet normal cone to \(D\) at \(\bar w\) and the Mordukhovich (limiting) normal cone to \(D\) at \(\bar w\) are defined by
\[
\begin{aligned}
	\widehat{\mathcal N}_D(\bar w)
	&:=
	\left\{
	\xi\in\mathbb R^d
	\;\middle|\;
	\langle \xi,w-\bar w\rangle
	\le o(\|w-\bar w\|)
	\quad \text{as } w\to \bar w,\ w\in D
	\right\},\\
	\mathcal N_D(\bar w)
	&:=
	\left\{
	\xi\in\mathbb R^d
	\;\middle|\;
	\exists\, w^k\to \bar w,\ \exists\, \xi^k\to \xi
	\text{ such that }
	w^k\in D,\ 
	\xi^k\in \widehat{\mathcal N}_D(w^k)
	\right\},
\end{aligned}
\]
where \( o(w) \) denotes a function satisfying \( o(w)/w\to 0 \) as \( w \downarrow 0 \). When \(D\) is convex, \(\mathcal N_D(\bar w)\) reduces to the classical convex normal cone.

The following assumptions are the standing assumptions throughout the paper. 

\begin{assumption}\label{asup_stationarity_upper}
	The upper-level objective function \(F\) is continuously differentiable on an open set containing \(\Omega\). For each \(i=1,\ldots,p\), the upper-level constraint function \(G_i\) is continuously differentiable and convex on an open set containing \(\Omega\).
\end{assumption}

\begin{assumption}\label{asup_stationarity_lower}
	For every \(x\in X\), the functions \(f(x,\cdot)\) and \(g_i(x,\cdot)\), \(i=1,\ldots,q\), are convex on \(\mathbb R^m\). Moreover, \(f\) and \(g_i\), \(i=1,\ldots,q\), are twice continuously differentiable on an open set containing \(X\times\mathbb R^m\).
\end{assumption}

\subsection{Basic Properties of the Regularized Gap Function}
This subsection recalls several basic properties of the regularized gap function used below.

The result \cite[Lemma 2.1]{yao2025overcoming} shows that the regularized gap function is nonnegative and that its vanishing exactly characterizes lower-level optimality together with the lower-level KKT multiplier condition.

\begin{lemma}\label{lem:gap_nonnegative_exact}
	Under the standing assumptions, let \(\gamma_1,\gamma_2>0\), for any \(x\in X\) and \(y\in\mathbb R^m\), define
	\[
	\mathcal M(x,y):=\left\{\lambda\in\mathbb R_+^{q}\ \middle|\ 0=\nabla_y f(x,y)+\nabla_y g(x,y)^\top\lambda,\ \lambda^\top g(x,y)=0\right\}.
	\]
	Then, for any \((x,y,z)\in X\times\mathbb R^m\times\mathbb R_+^q\), one has \(\mathcal G_\gamma(x,y,z)\ge0\). Moreover, for any \((x,y,z)\in X\times\mathbb R^m\times\mathbb R_+^q\), $\mathcal G_\gamma(x,y,z)\le 0$  if and only if $y\in S(x)$ and $z\in\mathcal M(x,y)$.
\end{lemma}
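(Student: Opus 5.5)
The plan is to split $\mathcal G_\gamma$ into a sum of two separately controllable pieces. Write
\[
\mathcal G_\gamma(x,y,z)=\Phi_1(x,y,z)+\Phi_2(x,y,z),
\]
where
\[
\Phi_1:=\max_{\lambda\ge0}\Big\{\mathcal L(x,y,\lambda)-\mathcal L(x,y,z)-\tfrac{1}{2\gamma_2}\|\lambda-z\|^2\Big\}
\]
and
\[
\Phi_2:=\max_{\theta}\Big\{\mathcal L(x,y,z)-\mathcal L(x,\theta,z)-\tfrac{1}{2\gamma_1}\|\theta-y\|^2\Big\}.
\]
This splitting is exact because the objective in \eqref{eq:defG} separates in $\lambda$ and $\theta$ after adding and subtracting $\mathcal L(x,y,z)$.

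\textbf{Nonnegativity.} Choosing $\lambda=z$ and $\theta=y$ shows $\Phi_1\ge0$ and $\Phi_2\ge0$, and therefore $\mathcal G_\gamma\ge0$. This part needs no assumptions beyond $z\ge0$.

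\textbf{Sufficiency ($\Leftarrow$).} Suppose $y\in S(x)$ and $z\in\mathcal M(x,y)$. Then $g(x,y)\le0$, so
\[
\Phi_1=\max_{\lambda\ge0}\Big\{(\lambda-z)^\top g(x,y)-\tfrac1{2\gamma_2}\|\lambda-z\|^2\Big\}.
\]
Using $z^\top g=0$ and $g\le0$, we get $(\lambda-z)^\top g=\lambda^\top g\le0$, so $\Phi_1\le0$. For $\Phi_2$, convexity of $\mathcal L(x,\cdot,z)$ (which holds since $z\ge0$ and Assumption~\ref{asup_stationarity_lower} applies on $X$) gives
\[
\mathcal L(x,\theta,z)\ge\mathcal L(x,y,z)+\nabla_y\mathcal L(x,y,z)^\top(\theta-y)=\mathcal L(x,y,z),
\]
because $\nabla_y\mathcal L(x,y,z)=0$. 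Hence $\Phi_2\le0$, and so $\mathcal G_\gamma\le0$.

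\textbf{Necessity ($\Rightarrow$).} Suppose $\mathcal G_\gamma\le0$. By nonnegativity, $\Phi_1=\Phi_2=0$.
\begin{itemize}
\item For $\Phi_1$: the maximization is a concave quadratic over $\mathbb R^q_+$, separable by components. Its maximizer is $\lambda^*=[z+\gamma_2 g(x,y)]_+$, with value
\[
\sum_i\Big(( \lambda_i^*-z_i)g_i-\tfrac1{2\gamma_2}(\lambda_i^*-z_i)^2\Big).
\]
I would check componentwise that each summand is nonnegative and vanishes iff $\lambda_i^*=z_i$, i.e.\ iff $z_i=[z_i+\gamma_2g_i]_+$. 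This equation is equivalent to $g_i\le0$, $z_i\ge0$, $z_ig_i=0$, which gives feasibility $y\in\Gamma(x)$ and complementarity. Alternatively, zero optimal value with $\lambda=z$ attaining it forces $z$ to be the unique maximizer by strong concavity.
\item For $\Phi_2$: the objective $-\mathcal L(x,\theta,z)-\frac1{2\gamma_1}\|\theta-y\|^2$ is strongly concave in $\theta$, and $\theta=y$ attains the value $0$. So $y$ is the unique maximizer, i.e.\ $y$ is the proximal point of $\mathcal L(x,\cdot,z)$ at itself. The first-order condition then yields $\nabla_y\mathcal L(x,y,z)=0$.
\end{itemize}
Together these give $z\in\mathcal M(x,y)$. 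Finally, $y\in S(x)$ follows from the standard convex argument: for any $u\in\Gamma(x)$,
\[
f(x,u)\ge f(x,u)+z^\top g(x,u)=\mathcal L(x,u,z)\ge\mathcal L(x,y,z)=f(x,y),
\]
using $z\ge0$, $g(x,u)\le0$, the minimality of $y$ for the convex function $\mathcal L(x,\cdot,z)$, and $z^\top g(x,y)=0$.

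The main care point is the componentwise analysis of $\Phi_1$. Specifically, I would verify that zero optimal value forces $\lambda^*=z$, using strong concavity and uniqueness of the maximizer rather than explicit formulas. I would also make sure the convexity assumptions are applied only for $x\in X$, which the statement guarantees.
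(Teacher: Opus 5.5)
Your proof is correct and complete. The splitting into the $\lambda$-block $\Phi_1$ and the $\theta$-block $\Phi_2$, the choice $(\lambda,\theta)=(z,y)$ for nonnegativity, and the uniqueness-of-maximizer argument forcing $z=[z+\gamma_2 g(x,y)]_+$ and $\nabla_y\mathcal L(x,y,z)=0$ all go through for $x\in X$. The paper does not prove this lemma itself; it imports it from \cite[Lemma~2.1]{yao2025overcoming}. Your decomposition is, however, exactly the one used in the proof of Lemma~\ref{lem:vanishing_gap_residuals}, where strong concavity/convexity gives $\frac{1}{2\gamma_2}\|\lambda^*-z\|^2\le\Phi_1\le\mathcal G_\gamma$ and $\frac{1}{2\gamma_1}\|\theta^*-y\|^2\le\Phi_2\le\mathcal G_\gamma$, which is the quantitative version of your qualitative step.
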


Using the definition of \(\Omega\), the regularized gap-function reformulation can be written compactly as
\begin{equation}\label{reformualtion_GP}
	\text{(GP)} \qquad
	\min_{(x,y,z)\in\Omega\times\mathbb R_+^{q}}
	\quad F(x,y)
	\quad
	\text{s.t.}\quad
	\mathcal G_\gamma(x,y,z)\le0.
\end{equation}

The preceding property yields the equivalence between the gap-function reformulation and the original bilevel problem, provided that lower-level multipliers exist at all feasible points; see \cite[Theorem 2.3]{yao2025overcoming}.

\begin{proposition}\label{prop:gap_reform_exact}
	Under the standing assumptions, let \(\gamma_1,\gamma_2>0\). Assume further that, for any feasible point \((x,y)\) of \eqref{BP}, a lower-level multiplier exists at \((x,y)\), i.e., \(\mathcal M(x,y)\neq\emptyset\). Then the regularized gap-function reformulation \eqref{reformualtion_GP} is equivalent to the bilevel optimization problem \eqref{BP}. Specifically, \((\bar{x},\bar{y})\) is an optimal solution to \eqref{BP} and \(\bar{z}\in\mathcal M(\bar{x},\bar{y})\) if and only if \((\bar{x},\bar{y},\bar{z})\) is an optimal solution to \eqref{reformualtion_GP}.
\end{proposition}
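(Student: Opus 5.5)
The plan is to use Lemma~\ref{lem:gap_nonnegative_exact} to identify the feasible set of \eqref{reformualtion_GP} exactly, and then compare objective values in both directions. Since \(F\) does not depend on \(z\), the whole argument reduces to set identities.

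The first step is to show that the feasible set of \eqref{reformualtion_GP} is
\[
\mathcal F_{GP}:=\{(x,y,z)\mid (x,y)\in\Omega,\ y\in S(x),\ z\in\mathcal M(x,y)\}.
\]
Let \((x,y,z)\in\Omega\times\mathbb R^q_+\). Then \(x\in X\) by definition of \(X\), so Lemma~\ref{lem:gap_nonnegative_exact} applies. It gives \(\mathcal G_\gamma(x,y,z)\le0\) if and only if \(y\in S(x)\) and \(z\in\mathcal M(x,y)\). Conversely, every element of \(\mathcal M(x,y)\) lies in \(\mathbb R^q_+\). Hence \(\mathcal F_{GP}\) is exactly the feasible set of (GP). Its projection onto \((x,y)\) is contained in the feasible set \(\mathcal F_{BP}\) of \eqref{BP}.

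The second step is the reverse inclusion of projections. Here the hypothesis \(\mathcal M(x,y)\neq\emptyset\) for every feasible \((x,y)\) of \eqref{BP} is needed. For any \((x,y)\in\mathcal F_{BP}\), choose \(z\in\mathcal M(x,y)\); then \((x,y,z)\in\mathcal F_{GP}\). Thus
\[
\{(x,y)\mid \exists z,\ (x,y,z)\in\mathcal F_{GP}\}=\mathcal F_{BP}.
\]
It follows that the two problems have the same infimum, since \(F\) does not depend on \(z\).

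The final step proves both implications of the equivalence.
\begin{itemize}
\item[\(\Rightarrow\)] Suppose \((\bar x,\bar y)\) solves \eqref{BP} and \(\bar z\in\mathcal M(\bar x,\bar y)\). Then \((\bar x,\bar y,\bar z)\) is feasible for (GP). For any feasible \((x,y,z)\) of (GP), the pair \((x,y)\) is feasible for \eqref{BP}, so \(F(\bar x,\bar y)\le F(x,y)\).
\item[\(\Leftarrow\)] Suppose \((\bar x,\bar y,\bar z)\) solves (GP). Feasibility gives \(\bar z\in\mathcal M(\bar x,\bar y)\) and \((\bar x,\bar y)\in\mathcal F_{BP}\). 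For any \((x,y)\in\mathcal F_{BP}\), lift it via some \(z\in\mathcal M(x,y)\) to a feasible point of (GP); this yields \(F(\bar x,\bar y)\le F(x,y)\).
\end{itemize}

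There is no real obstacle; the only delicate point is the lifting step in the \(\Leftarrow\) direction. It uses nonemptiness of \(\mathcal M\) at every feasible point of \eqref{BP}, not only at the solution. This is precisely the hypothesis of the proposition. One must also note that \(x\in X\) holds automatically for \((x,y)\in\Omega\), so that Lemma~\ref{lem:gap_nonnegative_exact} is applicable.
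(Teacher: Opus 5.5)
Your proof is correct and matches the argument the paper relies on: the paper does not prove this itself but obtains it from Lemma~\ref{lem:gap_nonnegative_exact}, citing Theorem 2.3 of the source. That route is what you do, namely identifying the feasible set of (GP) as $\{(x,y,z)\mid (x,y)\in\Omega,\ y\in S(x),\ z\in\mathcal M(x,y)\}$ and lifting feasible points of \eqref{BP} through the nonempty multiplier sets. Your remarks that $x\in X$ holds automatically for $(x,y)\in\Omega$, and that the multiplier hypothesis is needed only in the $\Leftarrow$ direction, are both accurate.
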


The function $\mathcal G_\gamma$ is continuously differentiable when \(f\) and \(g\) are continuously differentiable. Under the standing assumptions, the function \(\theta\mapsto f(x,\theta)+z^\top g(x,\theta)+\|\theta-y\|^2/(2\gamma_1)\) is coercive and \(1/\gamma_1\)-strongly convex for every \((x,y,z)\in\Omega\times\mathbb R_+^q\). Hence, the minimizer $\theta^*(x,y,z)$ defined below exists and is unique. The next result is adapted from \cite[Lemma 2.2]{yao2025overcoming}.

\begin{lemma}\label{smooth_G}
	Under the standing assumptions, let \(\gamma_1,\gamma_2>0\). Then \(\mathcal G_\gamma\) is continuously differentiable on \(\Omega\times\mathbb R_+^{q}\), and for any \((x,y,z)\in\Omega\times\mathbb R_+^{q}\),
	\begin{equation}\label{Ggradient}
		\nabla \mathcal G_\gamma(x,y,z) 
		=	\begin{pmatrix}
			\nabla_x f(x,y) + \nabla_x g(x,y)^\top \lambda^*  \\
			\nabla_y f(x,y) + \nabla_y g(x,y)^\top \lambda^*  \\
			-\left(z - \lambda^*\right)/\gamma_2
		\end{pmatrix}-
		\begin{pmatrix}
			\nabla_x f(x,\theta^*) + \nabla_x g(x,\theta^*)^\top z \\
			\left(y - \theta^*\right)/\gamma_1 \\
			g(x, \theta^*)
		\end{pmatrix},
	\end{equation}
	where $\theta^*$ and $\lambda^*$ denote $\theta^*(x,y,z)$ and $\lambda^*(x,y,z)$, respectively, defined as
	\begin{equation}\label{eq:deftheta}
		\begin{aligned}
			\theta^*(x,y,z) &:= \underset{\theta \in \mathbb R^m}{\mathrm{argmin}}  \left\{ f(x,\theta) + z^{T} g(x, \theta) + \frac{1}{2\gamma_1}\| \theta - y\|^2  \right\}, \\
			\lambda^*(x,y,z) &:= \underset{\lambda \in \mathbb R_+^{q}}{\mathrm{argmax}} \left\{  f(x, y) + \lambda^{T} g(x, y) - \frac{1}{2\gamma_2}\| \lambda - z\|^2 \right\} = \mathrm{Proj}_{\mathbb R_+^{q}}\left( z + \gamma_2 g(x,y)  \right).
		\end{aligned}
	\end{equation}
\end{lemma}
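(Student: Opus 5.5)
The plan is to write $\mathcal G_\gamma$ as the sum of two Moreau-envelope-type parametric optimization problems and apply Danskin's theorem to each. Since the objective in \eqref{eq:defG} separates in $(\theta,\lambda)$, we have
\[
\mathcal G_\gamma(x,y,z)=\Phi_1(x,y,z)-\Phi_2(x,y,z),
\]
where
\[
\Phi_1(x,y,z):=\max_{\lambda\in\mathbb R_+^q}\Big\{f(x,y)+\lambda^\top g(x,y)-\tfrac{1}{2\gamma_2}\|\lambda-z\|^2\Big\},
\]
\[
\Phi_2(x,y,z):=\min_{\theta\in\mathbb R^m}\Big\{f(x,\theta)+z^\top g(x,\theta)+\tfrac{1}{2\gamma_1}\|\theta-y\|^2\Big\}.
\]
We treat $\Phi_1$ and $\Phi_2$ separately.

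For $\Phi_1$, the inner problem is a strongly concave quadratic in $\lambda$ over $\mathbb R^q_+$. Completing the square gives the explicit maximizer $\lambda^*=\mathrm{Proj}_{\mathbb R_+^q}(z+\gamma_2 g(x,y))$, which proves the second formula in \eqref{eq:deftheta}. It also gives the closed form
\[
\Phi_1=f(x,y)+\tfrac{\gamma_2}{2}\big\|[z/\gamma_2+g(x,y)]_+\big\|^2-\tfrac{1}{2\gamma_2}\|z\|^2 .
\]
The map $w\mapsto\frac12\|[w]_+\|^2$ is $C^1$ with gradient $[w]_+$, so the chain rule gives $C^1$ smoothness and the first block in \eqref{Ggradient}. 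For the $z$-block, $\lambda^*-z/\gamma_2\cdot\gamma_2=\lambda^*-z$ contributes $(\lambda^*-z)/\gamma_2$. Equivalently, Danskin's theorem with a unique maximizer yields the same partial gradients of the integrand evaluated at $\lambda^*$.

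For $\Phi_2$, the main work is justifying Danskin's theorem, since $\theta$ ranges over an unbounded set. Fix $(\bar x,\bar y,\bar z)\in\Omega\times\mathbb R^q_+$. By Assumption \ref{asup_stationarity_lower}, the objective $\theta\mapsto \mathcal L(x,\theta,z)+\|\theta-y\|^2/(2\gamma_1)$ is $1/\gamma_1$-strongly convex for $x\in X$ and $z\ge0$. The minimizer $\theta^*$ is therefore unique, as noted before the lemma.

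The key step is a local uniform boundedness of minimizers. We compare the objective value at $\theta$ with its value at $\theta^*(\bar x,\bar y,\bar z)$, and use a lower bound from a supporting hyperplane of the convex function $\mathcal L(x,\cdot,z)$ at a fixed point. Continuity of first derivatives in $(x,z)$ makes this bound uniform near $(\bar x,\bar z)$, so the quadratic term forces all minimizers for nearby parameters into a fixed compact ball. Continuity of $\theta^*$ then follows from uniqueness and a standard closedness argument.

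This localization is the main obstacle. One subtlety is that convexity is only assumed for $x\in X$, so the argument must be carried out relative to $\Omega\times\mathbb R^q_+$, interpreting differentiability on this (possibly nonopen) set through the continuous extension of the formula. With the compact localization in hand, Danskin's theorem applied to the minimization over the compact ball gives that $\Phi_2$ is differentiable. Its gradient is the partial gradient of the integrand at $\theta^*$, namely $\big(\nabla_x f(x,\theta^*)+\nabla_x g(x,\theta^*)^\top z,\ (y-\theta^*)/\gamma_1,\ g(x,\theta^*)\big)$. This gradient is continuous because $\theta^*$ is continuous.

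Subtracting the gradient of $\Phi_2$ from that of $\Phi_1$ yields \eqref{Ggradient}, and continuity of both pieces gives $\mathcal G_\gamma\in C^1$.
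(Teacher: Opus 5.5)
Your proof is correct. The paper gives no argument for this lemma; it imports it from \cite[Lemma~2.2]{yao2025overcoming}. Your splitting $\mathcal G_\gamma=\Phi_1-\Phi_2$ is exactly the representation \eqref{vg_eq2}, $\mathcal G_\gamma=f+\mathcal P_{\gamma_2}-M_{\mathcal L}^{\gamma_1}$, stated right after the lemma, so what you supply is a self-contained proof of a cited fact (needing only $C^1$ data), not a competing route. Three points are worth tightening.

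(i) The phrase about differentiability ``through the continuous extension of the formula'' is not what your argument uses. What your Danskin step actually delivers is the following. Take an open neighbourhood $V$ of $\bar u$ and a closed ball $K$ containing $\theta^*(u)$ for all $u\in(\Omega\times\mathbb R^q_+)\cap V$. Then $\phi_K(u):=\min_{\theta\in K}\{\mathcal L(x,\theta,z)+\|\theta-y\|^2/(2\gamma_1)\}$ is finite on all of $V$; only continuity of the integrand is needed, and $z$ may be any vector in $\mathbb R^q$. It is Fr\'echet differentiable wherever its minimizer over $K$ is unique, hence at every point of the set by strong convexity, and $\Phi_2=\phi_K$ on $(\Omega\times\mathbb R^q_+)\cap V$. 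This relative statement is the right one, because $\mathcal G_\gamma$ need not be finite off the set. For example, with $m=q=1$, $f\equiv0$, $g(x,\theta)=\theta^4-1$, one gets $M_{\mathcal L}^{\gamma_1}=-\infty$, i.e.\ $\mathcal G_\gamma=+\infty$, for every $z<0$.

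(ii) The localization and the continuity of $\theta^*$ follow more directly from the strong-monotonicity estimates the paper uses later. The bound $\|\theta^*(x,y,z)-y\|\le\gamma_1\|\nabla_y\mathcal L(x,y,z)\|$ from the proof of Lemma~\ref{lem:theta-bounded} gives the uniform ball at once. Comparing the optimality conditions at two parameters $u_1,u_2$ in the set, as in Lemma~\ref{lem:theta-z-lipschitz}, gives $\|\theta^*(u_1)-\theta^*(u_2)\|\le\gamma_1\|\nabla_y\mathcal L(x_1,\theta^*(u_2),z_1)-\nabla_y\mathcal L(x_2,\theta^*(u_2),z_2)\|+\|y_1-y_2\|$, so no closedness argument is needed. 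Your supporting-hyperplane bound also works; it is just longer.

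(iii) Your sentence on the $z$-block of $\Phi_1$ is garbled. The clean computation is $\nabla_z\Phi_1=[z/\gamma_2+g(x,y)]_+-z/\gamma_2=(\lambda^*-z)/\gamma_2$.
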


The next lemma shows that \(\mathcal G_\gamma(x^k,y^k,z^k)\to0\) implies \(\theta_k^*-y^k\to0\) and \(\lambda_k^*-z^k\to0\), where \(\theta_k^*:=\theta^*(x^k,y^k,z^k)\) and \(\lambda_k^*:=\lambda^*(x^k,y^k,z^k)\).

\begin{lemma}\label{lem:vanishing_gap_residuals}
	Under the standing assumptions, let \(\gamma_1,\gamma_2>0\), \(\{(x^k,y^k,z^k)\}\subset\Omega\times\mathbb R_+^q\), and define \(\theta_k^*:=\theta^*(x^k,y^k,z^k)\) and \(\lambda_k^*:=\lambda^*(x^k,y^k,z^k)\). If \(\mathcal G_\gamma(x^k,y^k,z^k)\to0\), then 
	\[
	\theta_k^*-y^k\to0, \quad \lambda_k^*-z^k\to0.
	\]
\end{lemma}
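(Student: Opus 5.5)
The plan is to obtain a quantitative lower bound for $\mathcal G_\gamma$ in terms of the two residuals,
\[
\mathcal G_\gamma(x,y,z)\ \ge\ \frac{1}{2\gamma_1}\|\theta^*-y\|^2+\frac{1}{2\gamma_2}\|\lambda^*-z\|^2,
\]
valid at every $(x,y,z)\in\Omega\times\mathbb R_+^q$ with $x\in X$. The lemma then follows by applying it along the sequence and letting $k\to\infty$.

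Because the maximization in \eqref{eq:defG} separates in $\theta$ and $\lambda$, the gap splits as $\mathcal G_\gamma=A+B$, where
\[
A:=\mathcal L(x,y,\lambda^*)-\tfrac{1}{2\gamma_2}\|\lambda^*-z\|^2-\mathcal L(x,y,z)
\]
and
\[
B:=\mathcal L(x,y,z)-\mathcal L(x,\theta^*,z)-\tfrac{1}{2\gamma_1}\|\theta^*-y\|^2,
\]
with $\theta^*,\lambda^*$ as in \eqref{eq:deftheta}. I add and subtract $\mathcal L(x,y,z)$ to get this split.

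\textbf{Primal part.} The function $\phi(\theta):=\mathcal L(x,\theta,z)+\frac{1}{2\gamma_1}\|\theta-y\|^2$ is $1/\gamma_1$-strongly convex, since $z\ge0$ and Assumption~\ref{asup_stationarity_lower} makes $\mathcal L(x,\cdot,z)$ convex. Its minimizer is $\theta^*$, so strong convexity gives $\phi(y)\ge\phi(\theta^*)+\frac{1}{2\gamma_1}\|y-\theta^*\|^2$. Since $\phi(y)=\mathcal L(x,y,z)$, this reads $B\ge \frac{1}{2\gamma_1}\|\theta^*-y\|^2$.

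\textbf{Dual part.} The function $\psi(\lambda):=\lambda^\top g(x,y)-\frac{1}{2\gamma_2}\|\lambda-z\|^2$ is $1/\gamma_2$-strongly concave, and $\lambda^*$ maximizes it over the convex set $\mathbb R_+^q$. Since $z\in\mathbb R^q_+$ is feasible, the first-order optimality condition combined with strong concavity gives $\psi(\lambda^*)\ge\psi(z)+\frac{1}{2\gamma_2}\|z-\lambda^*\|^2$. Writing this out,
\[
\lambda^{*\top}g-\tfrac{1}{2\gamma_2}\|\lambda^*-z\|^2\ \ge\ z^\top g+\tfrac{1}{2\gamma_2}\|\lambda^*-z\|^2,
\]
that is, $A\ge \frac{1}{2\gamma_2}\|\lambda^*-z\|^2$.

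Adding the two bounds gives the displayed inequality. This step also reproves the nonnegativity in Lemma~\ref{lem:gap_nonnegative_exact}.

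Applying the inequality at each $(x^k,y^k,z^k)$ yields
\[
\frac{1}{2\gamma_1}\|\theta_k^*-y^k\|^2+\frac{1}{2\gamma_2}\|\lambda_k^*-z^k\|^2\le \mathcal G_\gamma(x^k,y^k,z^k)\to0,
\]
and both residuals tend to zero because $\gamma_1,\gamma_2>0$ are fixed.

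The only delicate points are the domain of the convexity assumptions and getting the quadratic-growth constants right. Convexity of $f(x,\cdot)$ and $g_i(x,\cdot)$ is assumed for $x\in X$, and $(x^k,y^k)\in\Omega$ implies $x^k\in X$, so the assumptions apply. For the constants, the key is to use the growth inequality at the minimizer, which gives the factor $\frac{1}{2\gamma}$, rather than the definition of strong convexity at a midpoint. No compactness or boundedness of the sequence is needed, since the bound is pointwise.
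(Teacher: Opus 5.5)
Your proof is correct and takes essentially the same route as the paper. The paper also uses quadratic growth of the $1/\gamma_1$-strongly convex $\theta$-subproblem at the feasible point $y^k$, and of the $1/\gamma_2$-strongly concave $\lambda$-subproblem at the feasible point $z^k$. The only difference is that the paper bounds each residual separately by $\mathcal G_\gamma$, discarding the other half via its nonnegativity, whereas you add the two halves to get a single combined inequality.
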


\begin{proof}
	For each \(k\), define $\theta_k^*:=\theta^*(x^k,y^k,z^k)$, and $\lambda_k^*:=\lambda^*(x^k,y^k,z^k)$. By the definition of \(\mathcal G_\gamma\),
	\[
	\mathcal G_\gamma(x^k,y^k,z^k)
	=
	\max_{\lambda\in\mathbb R_+^q}
	\left\{
	\mathcal L(x^k,y^k,\lambda)
	-\frac{1}{2\gamma_2}\|\lambda-z^k\|^2
	\right\}
	-
	\min_{\theta\in\mathbb R^m}
	\left\{
	\mathcal L(x^k,\theta,z^k)
	+\frac{1}{2\gamma_1}\|\theta-y^k\|^2
	\right\}.
	\]
	
	Since \(z^k\in\mathbb R_+^q\) is feasible for the maximization problem in \(\lambda\), and the minimization problem defining \(\theta_k^*\) is \(1/\gamma_1\)-strongly convex with \(y^k\) feasible, we obtain
	\begin{equation}\label{eq:theta_residual_bound}
		\begin{aligned}
			\frac{1}{2\gamma_1}\|\theta_k^*-y^k\|^2\le
			\mathcal L(x^k,y^k,z^k)
			-
			\left(
			\mathcal L(x^k,\theta_k^*,z^k)
			+\frac{1}{2\gamma_1}\|\theta_k^*-y^k\|^2
			\right)\le
			\mathcal G_\gamma(x^k,y^k,z^k).
		\end{aligned}
	\end{equation}
	Similarly, since \(y^k\) is admissible for the minimization problem in \(\theta\), and since the maximization problem defining \(\lambda_k^*\) is \(1/\gamma_2\)-strongly concave with \(z^k\in\mathbb R_+^q\) feasible, we obtain
	\begin{equation}\label{eq:lambda_residual_bound}
		\frac{1}{2\gamma_2}\|\lambda_k^*-z^k\|^2\le\left(
			\mathcal L(x^k,y^k,\lambda_k^*)
			-\frac{1}{2\gamma_2}\|\lambda_k^*-z^k\|^2
			\right)		-		\mathcal L(x^k,y^k,z^k) \le
			\mathcal G_\gamma(x^k,y^k,z^k).
	\end{equation}
	Since \(\mathcal G_\gamma(x^k,y^k,z^k)\to0\), the two estimates \eqref{eq:theta_residual_bound} and \eqref{eq:lambda_residual_bound} imply $\theta_k^*-y^k\to0$, $\lambda_k^*-z^k\to0$. 
\end{proof}

Since $\lambda^*(x,y,z)$ admits a closed-form expression, substituting it into the maximization problem in the definition of $\mathcal G_\gamma(x,y,z)$ yields the representation
\begin{equation}\label{vg_eq2}
	\mathcal G_\gamma(x,y,z) = f(x,y) + \mathcal{P}_{\gamma_2}(x,y,z) - M_{\mathcal L}^{\gamma_1}(x,y,z).
\end{equation}
Here,
\[
	\mathcal{P}_{\gamma_2}(x,y,z) := \frac{1}{2\gamma_2} \left(\| [z + \gamma_2 g(x,y)]_+\|^2 - \|z\|^2\right),
\]
which is the classical Powell--Hestenes--Rockafellar (PHR) penalty function \cite{hestenes1969multiplier,powell1969method,rockafellar1976augmented} for the lower-level constraints, and
\begin{equation}\label{def_moreau_envelope}
	M_{\mathcal L}^{\gamma_1}(x,y,z) := \min_{\theta \in \mathbb R^m} \left\{ \mathcal L(x,\theta,z) + \frac{1}{2\gamma_1}\| \theta - y\|^2 \right\},
\end{equation}
which is the Moreau envelope \cite{moreau1965proximite} of the lower-level Lagrangian \(\mathcal L\), for fixed \(x\) and \(z\). Since the squared positive-part function is continuously differentiable, \(\mathcal P_{\gamma_2}\) is continuously differentiable whenever \(g\) is continuously differentiable. 

In the algorithmic analysis, \(\theta^*(x,y,z)\) is computed only approximately. The next estimate quantifies how the stationarity residual controls both the distance to \(\theta^*(x,y,z)\) and the Moreau-envelope error.
\begin{lemma}\label{error_bound}
	Suppose Assumption~\ref{asup_stationarity_lower} holds, and let \(\gamma_1>0\). Then, for any \((x,y,z)\in X\times\mathbb R^m\times\mathbb R_+^q\) and \(\theta\in\mathbb R^m\), it holds that
	\[
	\|\theta-\theta^*(x,y,z)\|
	\le
	\gamma_1
	\left\|
	\nabla_y f(x,\theta)
	+\nabla_y g(x,\theta)^\top z
	+\frac{\theta-y}{\gamma_1}
	\right\|,
	\]
	where $\theta^*(x,y,z):=\operatorname{argmin}_{\theta\in\mathbb R^m}\left\{f(x,\theta)+z^\top g(x,\theta)+\|\theta-y\|^2/(2\gamma_1)	\right\}$. Moreover,
	\[
	\mathcal L(x,\theta,z)
	+\frac{1}{2\gamma_1}\|\theta-y\|^2
	-M_{\mathcal L}^{\gamma_1}(x,y,z)
	\le
	\frac{\gamma_1}{2}
	\left\|
	\nabla_y f(x,\theta)
	+\nabla_y g(x,\theta)^\top z
	+\frac{\theta-y}{\gamma_1}
	\right\|^2.
	\]
\end{lemma}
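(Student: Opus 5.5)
The plan is to work with the strongly convex function
\[
\phi(\vartheta):=\mathcal L(x,\vartheta,z)+\frac{1}{2\gamma_1}\|\vartheta-y\|^2 ,
\]
viewed as a function of \(\vartheta\) for fixed \((x,y,z)\in X\times\mathbb R^m\times\mathbb R_+^q\). Its gradient is
\[
\nabla\phi(\vartheta)=\nabla_y f(x,\vartheta)+\nabla_y g(x,\vartheta)^\top z+(\vartheta-y)/\gamma_1 .
\]
By Assumption~\ref{asup_stationarity_lower} and \(z\ge0\), the map \(\vartheta\mapsto f(x,\vartheta)+z^\top g(x,\vartheta)\) is convex and differentiable. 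Hence \(\phi\) is continuously differentiable and \(1/\gamma_1\)-strongly convex. Its unique minimizer is \(\theta^*:=\theta^*(x,y,z)\), it satisfies \(\nabla\phi(\theta^*)=0\), and \(\phi(\theta^*)=M_{\mathcal L}^{\gamma_1}(x,y,z)\) by \eqref{def_moreau_envelope}. Both claims are then standard consequences of strong convexity.

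For the first inequality, I will use strong monotonicity of \(\nabla\phi\), namely \(\langle\nabla\phi(\theta)-\nabla\phi(\theta^*),\theta-\theta^*\rangle\ge\|\theta-\theta^*\|^2/\gamma_1\). Since \(\nabla\phi(\theta^*)=0\), Cauchy--Schwarz gives \(\|\theta-\theta^*\|^2/\gamma_1\le\|\nabla\phi(\theta)\|\,\|\theta-\theta^*\|\). Dividing by \(\|\theta-\theta^*\|\) when it is nonzero, and noting the trivial case otherwise, yields \(\|\theta-\theta^*\|\le\gamma_1\|\nabla\phi(\theta)\|\).

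For the second inequality, I will use the strong convexity lower bound at the point \(\theta\):
\[
\phi(u)\ge\phi(\theta)+\langle\nabla\phi(\theta),u-\theta\rangle+\frac{1}{2\gamma_1}\|u-\theta\|^2
\quad\text{for all } u .
\]
Minimizing both sides over \(u\), the right-hand side is minimized at \(u=\theta-\gamma_1\nabla\phi(\theta)\), which gives
\[
M_{\mathcal L}^{\gamma_1}(x,y,z)=\min_u\phi(u)\ge\phi(\theta)-\frac{\gamma_1}{2}\|\nabla\phi(\theta)\|^2 .
\]
This is the Polyak--Łojasiewicz inequality, and rearranging it gives the claim.

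There is no genuine obstacle here. The only point needing care is that convexity and differentiability hold at the given \(x\). This is guaranteed because \(x\in X\): Assumption~\ref{asup_stationarity_lower} supplies convexity of \(f(x,\cdot)\) and \(g_i(x,\cdot)\) on \(\mathbb R^m\), together with smoothness on an open set containing \(X\times\mathbb R^m\).
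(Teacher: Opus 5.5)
Your proposal is correct and follows essentially the same argument as the paper. The first bound comes from the $1/\gamma_1$-strong monotonicity of $\nabla\phi$ plus Cauchy--Schwarz, and the second from the strong-convexity lower model of $\phi$ at $\theta$; the paper evaluates that model at $u=\theta^*$ and uses $ab-a^2/(2\gamma_1)\le\gamma_1 b^2/2$, whereas you minimize it over all $u$, which gives the same estimate.
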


\begin{proof}
	Fix \((x,y,z)\in X\times\mathbb R^m\times\mathbb R_+^q\). By Assumption~\ref{asup_stationarity_lower} and \(z\in\mathbb R_+^q\), the function \(\theta\mapsto\mathcal L(x,\theta,z)\) is convex. Hence $\mathcal L(x,\theta,z)+\frac{1}{2\gamma_1}\|\theta-y\|^2$ is \(1/\gamma_1\)-strongly convex. Therefore, it has a unique minimizer, denoted by \(\theta^*=\theta^*(x,y,z)\), and the first-order optimality condition gives $0= \nabla_y\mathcal L(x,\theta^*,z) +(\theta^*-y)/\gamma_1$.
		
	We first prove the error bound. Since the above strongly convex function has a \(1/\gamma_1\)-strongly monotone subdifferential, we obtain
	\[
	\frac{1}{\gamma_1}\|\theta-\theta^*\|^2
	\le
	\left\langle \nabla_y\mathcal L(x,\theta,z)+\frac{\theta-y}{\gamma_1},\theta-\theta^*\right\rangle
	\le
	\left\|\nabla_y\mathcal L(x,\theta,z)+\frac{\theta-y}{\gamma_1}\right\| \|\theta-\theta^*\|.
	\]
	If \(\theta=\theta^*\), the desired error bound holds trivially. Otherwise, dividing the preceding inequality by \(\|\theta-\theta^*\|\) gives $\|\theta-\theta^*\|\le \gamma_1\|\nabla_y\mathcal L(x,\theta,z)+(\theta-y)/\gamma_1\|$.
	
	It remains to prove the value estimate. By the \(1/\gamma_1\)-strong convexity of $\mathcal L(x,\theta,z)+\frac{1}{2\gamma_1}\|\theta-y\|^2$, we have
	\[
	\begin{aligned}
		&\quad \mathcal L(x,\theta,z)
		+\frac{1}{2\gamma_1}\|\theta-y\|^2
		-
		\mathcal L(x,\theta^*,z)
		-\frac{1}{2\gamma_1}\|\theta^*-y\|^2\\
		&\le
		\left\langle \nabla_y\mathcal L(x,\theta,z)+\frac{\theta-y}{\gamma_1},\theta-\theta^*\right\rangle
		-
		\frac{1}{2\gamma_1}\|\theta-\theta^*\|^2\\
		&\le
		\left\|\nabla_y\mathcal L(x,\theta,z)+\frac{\theta-y}{\gamma_1}\right\| \|\theta-\theta^*\|
		-
		\frac{1}{2\gamma_1}\|\theta-\theta^*\|^2\\
		&\le
		\frac{\gamma_1}{2}\left\|\nabla_y\mathcal L(x,\theta,z)+\frac{\theta-y}{\gamma_1}\right\|^2,
	\end{aligned}
	\]
	where the second inequality follows from the Cauchy--Schwarz inequality, and the last inequality follows from the elementary estimate \(ab-a^2/(2\gamma_1)\le \gamma_1 b^2/2\). Using the definition of \(M_{\mathcal L}^{\gamma_1}(x,y,z)\) yields
	\[
	\mathcal L(x,\theta,z)
	+\frac{1}{2\gamma_1}\|\theta-y\|^2
	-M_{\mathcal L}^{\gamma_1}(x,y,z)
	\le
	\frac{\gamma_1}{2}
	\left\|
	\nabla_y\mathcal L(x,\theta,z)
	+\frac{\theta-y}{\gamma_1}
	\right\|^2.
	\]
	The desired estimate follows again from
	\(\nabla_y\mathcal L(x,\theta,z)=\nabla_y f(x,\theta)+\nabla_y g(x,\theta)^\top z\).
\end{proof}

\subsection{MPCC Reformulation, Constraint Qualifications, and Stationarity Notions}
This subsection recalls the MPCC structure of the KKT reformulation introduced in Section~\ref{sec1}. We state the complementarity geometry and index sets, the MPCC constraint qualification used below, and the MPCC stationarity notions.

The complementarity condition \(0\le z \perp -g(x,y)\ge0\) in \eqref{eq:kkt-reform} can be written as \((z,-g(x,y))\in\mathcal C\), where
\begin{equation}\label{def_C}
\mathcal C:=\{(z,s)\in\mathbb R_+^q\times\mathbb R_+^q\mid z_i s_i=0,\ i=1,\ldots,q\}
\end{equation}
is the standard complementarity set. For any \((\bar z,\bar s)\in\mathcal C\), we define the complementarity index sets by
\[
\mathcal I_{+0}(\bar z,\bar s):=\{i\mid \bar z_i>0,\ \bar s_i=0\},\quad
\mathcal I_{0+}(\bar z,\bar s):=\{i\mid \bar z_i=0,\ \bar s_i>0\},\quad
\mathcal I_{00}(\bar z,\bar s):=\{i\mid \bar z_i=0,\ \bar s_i=0\}.
\]
With a slight abuse of notation, when \((\bar z,\bar s)=(\bar z,-g(\bar x,\bar y))\) is induced by a feasible point \(\bar w:=(\bar x,\bar y,\bar z)\) of the MPCC reformulation \eqref{eq:kkt-reform}, we write
\begin{equation}\label{MPCC_index_set}
	\begin{aligned}
		\mathcal I_{+0}(\bar w)&:=\{i\mid \bar z_i>0,\ -g_i(\bar x,\bar y)=0\},\ \mathcal I_{0+}(\bar w):=\{i\mid \bar z_i=0,\ -g_i(\bar x,\bar y)>0\},\\
		\mathcal I_{00}(\bar w)&:=\{i\mid \bar z_i=0,\ -g_i(\bar x,\bar y)=0\}.
	\end{aligned}
\end{equation}
We also define the active upper- and lower-level constraint index sets by
\[
I_G(\bar x,\bar y):=\{j\mid G_j(\bar x,\bar y)=0\},\qquad
I_g(\bar x,\bar y):=\{i\mid g_i(\bar x,\bar y)=0\}.
\]

The limiting normal-cone formula for \(\mathcal C\) follows from the formula for \(\operatorname{gph}N_{\mathbb R_+^q}\) in \cite[Proposition~3.7]{ye2000constraint}.

\begin{lemma}\label{lem:normal_cone_complementarity}
	For any \((\bar z,\bar s)\in\mathcal C\), the limiting normal cone to \(\mathcal C\) at \((\bar z,\bar s)\) is given by
	\[
	\mathcal N_{\mathcal C}(\bar z,\bar s)
	=
	\left\{
	(\xi^z,\xi^s)\in\mathbb R^{q}\times \mathbb R^{q}
	\ \middle|\
	\xi^z_{\mathcal I_{+0}}=0,\ \xi^s_{\mathcal I_{0+}}=0,\ 
	\left[
	(\xi^z_i<0,\ \xi^s_i<0)\ \text{or}\ \xi^z_i\xi^s_i=0
	\right]\ \forall\ i\in\mathcal I_{00}
	\right\},
	\]
	where $\mathcal I_{+0}:=\mathcal I_{+0}(\bar z,\bar s)$, $\mathcal I_{0+}:=\mathcal I_{0+}(\bar z,\bar s)$, $\mathcal I_{00}:=\mathcal I_{00}(\bar z,\bar s)$ .
\end{lemma}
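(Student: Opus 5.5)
Because $\mathcal C=\prod_{i=1}^q \mathcal C_i$ with $\mathcal C_i:=\{(a,b)\in\mathbb R_+^2\mid ab=0\}$, I will reduce everything to a single pair of coordinates. The Fr\'echet and limiting normal cones of a finite product of closed sets are the products of the factor cones. Hence $\mathcal N_{\mathcal C}(\bar z,\bar s)=\prod_i \mathcal N_{\mathcal C_i}(\bar z_i,\bar s_i)$, after reordering coordinates so that $(\xi^z_i,\xi^s_i)$ pairs with $\mathcal C_i$. It therefore suffices to compute the normal cone of the planar set $\mathcal C_1=(\mathbb R_+\times\{0\})\cup(\{0\}\times\mathbb R_+)$ at each of its points. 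Equivalently, one could invoke \cite[Proposition~3.7]{ye2000constraint}, since $\mathcal C$ is a linear image of $\operatorname{gph}N_{\mathbb R_+^q}$ via $(z,s)\mapsto(-s,z)$ up to sign conventions. I prefer the direct computation, which is elementary.

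The case analysis for $(a,b)\in\mathcal C_1$ runs as follows.

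\emph{Case $a>0$, $b=0$.} Near $(a,b)$ the set coincides locally with the half-line $\mathbb R_+\times\{0\}$, whose point $(a,0)$ lies in the relative interior. Both cones are therefore $\{0\}\times\mathbb R$, which gives $\xi^z_i=0$ for $i\in\mathcal I_{+0}$ with $\xi^s_i$ free.

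\emph{Case $a=0$, $b>0$.} Symmetrically, the cone is $\mathbb R\times\{0\}$, which gives $\xi^s_i=0$ for $i\in\mathcal I_{0+}$.

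\emph{Case $(0,0)$.} The set is a cone, so the Fr\'echet normal cone is the polar of $\mathcal C_1$, namely $\mathbb R_-\times\mathbb R_-$. The limiting cone is the union of three pieces: this polar, the limits of Fr\'echet normals at nearby points $(a,0)$ with $a>0$, which give $\{0\}\times\mathbb R$, and the limits at nearby points $(0,b)$ with $b>0$, which give $\mathbb R\times\{0\}$. The union equals $\{(\xi^z,\xi^s)\mid (\xi^z<0,\ \xi^s<0)\ \text{or}\ \xi^z\xi^s=0\}$. The boundary of the negative quadrant is already covered by $\xi^z\xi^s=0$.

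Finally, I assemble the coordinates via the product rule. The only point that needs care is the product formula for limiting normals. The Fr\'echet product formula follows directly from the definition, since $o(\|w-\bar w\|)$ splits across factors because each factor term is controlled by the full norm. The limiting formula then follows by taking sequences componentwise, as in Rockafellar--Wets, Proposition~6.41. I expect no real obstacle here; the main thing is to get the sign conventions at the $\mathcal I_{00}$ indices right.
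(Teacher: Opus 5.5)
Your proof is correct, but it takes a different route from the paper. The paper gives no computation. It obtains the formula by citing the description of the limiting normal cone to $\operatorname{gph}N_{\mathbb R_+^q}$ in \cite[Proposition~3.7]{ye2000constraint}, and implicitly transports it to $\mathcal C$ by a linear change of variables. You prove it from scratch instead. The product rule for Fr\'echet and limiting normals (Rockafellar--Wets, Proposition~6.41) reduces everything to the planar set $(\mathbb R_+\times\{0\})\cup(\{0\}\times\mathbb R_+)$. There the regular normal cone at the vertex is the polar $\mathbb R_-\times\mathbb R_-$, and the limiting cone adds the two axes inherited from the neighbouring branches.

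What your route buys is self-containment. It also makes explicit the componentwise three-branch description of $\mathcal N_{\mathcal C_i}$ that the paper later writes down in the proof of Lemma~\ref{lem:approximate_kkt_slack_normal_cone_free} and then exploits branch by branch in Theorem~\ref{thm:two_parameter_M_stationarity}. The citation route is shorter, but it hides a sign-bookkeeping step.

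Your parenthetical remark gets that step slightly off. The map $(z,s)\mapsto(-s,z)$ sends $\mathcal C$ onto $-\operatorname{gph}N_{\mathbb R_+^q}$, not onto the graph itself. The clean choice is $A(z,s):=(z,-s)$. This is a symmetric linear isomorphism with $A(\mathcal C)=\operatorname{gph}N_{\mathbb R_+^q}$, so $\mathcal N_{\mathcal C}(\bar z,\bar s)=A\,\mathcal N_{\operatorname{gph}N_{\mathbb R_+^q}}(\bar z,-\bar s)$. Under $A$, the biactive branch $(\alpha_i<0,\ \beta_i>0)$ of the graph's normal cone becomes exactly $(\xi^z_i<0,\ \xi^s_i<0)$. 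Since you do not rely on that remark, it does not affect your argument.
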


We next recall the constraint qualifications used for the KKT reformulation. Since standard nonlinear-programming constraint qualifications fail at feasible points of complementarity systems, MPCC-tailored constraint qualifications are needed. We shall use Mangasarian--Fromovitz constraint qualification (MFCQ) in its no-nonzero-abnormal-multiplier form and its local stability property.

Let \(g:\mathbb R^d\to\mathbb R^p,\ h:\mathbb R^d\to\mathbb R^q\) be continuously differentiable, and define a smooth equality--inequality system $\{w\mid h(w)=0,\ g(w)\le0\}$, denote the active inequality index set by \(I_g(\bar w):=\{i\mid g_i(\bar w)=0\}\). Since the subsequent analysis is formulated in terms of multiplier sequences and normal-cone inclusions, we use the following no-nonzero-abnormal-multiplier form of MFCQ.

\begin{definition}[MFCQ]\label{def:mfcq_general}
	Let \(\bar w\in\{w\mid h(w)=0,\ g(w)\le0\}\). We say that the Mangasarian--Fromovitz constraint qualification holds at \(\bar w\) for the system \(\{w\mid h(w)=0,\ g(w)\le0\}\), if
	\[
	\mu^h\in\mathbb R^q,\ 
	\mu^g_i\ge0,\ i\in I_g(\bar w),\ 
	\sum_{j=1}^q\mu^h_j\nabla h_j(\bar w)+\sum_{i\in I_g(\bar w)}\mu^g_i\nabla g_i(\bar w)=0
	\Longrightarrow
	\mu^h=0,\  \mu^g_i=0,\ i\in I_g(\bar w).
	\]
\end{definition}

The local stability of MFCQ is a standard consequence of the stability of Robinson's constraint qualification and its equivalence to MFCQ for smooth equality--inequality systems; see Bonnans and Shapiro~\cite[Remark~2.88 and Corollary~2.101]{bonnans2000perturbation}.

\begin{lemma}\label{lem:mfcq_stability}
	Let \(\bar w\in\{w\mid h(w)=0,\ g(w)\le0\}\), and MFCQ holds at \(\bar w\) for the system \(\{w\mid h(w)=0,\ g(w)\le0\}\), then there exists a neighborhood \(U\) of \(\bar w\) such that MFCQ holds at every feasible point \(\hat{w}\in\{w\mid h(w)=0,\ g(w)\le0\}\cap U\).
\end{lemma}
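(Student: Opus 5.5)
The plan is to argue by contradiction, using a compactness argument on normalized multipliers together with continuity of the gradients. Suppose no such neighborhood \(U\) exists. Then there is a sequence of feasible points \(w^k\to\bar w\) with \(h(w^k)=0\), \(g(w^k)\le0\), at each of which MFCQ (in the no-nonzero-abnormal-multiplier form of Definition~\ref{def:mfcq_general}) fails. So for each \(k\) there exist \(\mu^{h,k}\in\mathbb R^q\) and \(\mu^{g,k}_i\ge0\), \(i\in I_g(w^k)\), not all zero, such that
\[
\sum_{j=1}^q\mu^{h,k}_j\nabla h_j(w^k)+\sum_{i\in I_g(w^k)}\mu^{g,k}_i\nabla g_i(w^k)=0 .
\]
Extend \(\mu^{g,k}\) by zero outside \(I_g(w^k)\), and normalize so that \(\|(\mu^{h,k},\mu^{g,k})\|=1\). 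This is legitimate because the relation is positively homogeneous.

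The key index observation comes next. By continuity of \(g\), every \(i\notin I_g(\bar w)\) satisfies \(g_i(\bar w)<0\), and hence \(g_i(w^k)<0\) for all large \(k\). Therefore \(I_g(w^k)\subseteq I_g(\bar w)\) eventually, and \(\mu^{g,k}_i=0\) for \(i\notin I_g(\bar w)\).

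Passing to a subsequence, the normalized multipliers converge to some \((\bar\mu^h,\bar\mu^g)\) with unit norm, \(\bar\mu^g\ge0\), and \(\bar\mu^g_i=0\) for \(i\notin I_g(\bar w)\). Since \(h\) and \(g\) are continuously differentiable, \(\nabla h_j(w^k)\to\nabla h_j(\bar w)\) and \(\nabla g_i(w^k)\to\nabla g_i(\bar w)\). Taking limits in the displayed relation gives
\[
\sum_j\bar\mu^h_j\nabla h_j(\bar w)+\sum_{i\in I_g(\bar w)}\bar\mu^g_i\nabla g_i(\bar w)=0
\]
with a nonzero multiplier. This contradicts MFCQ at \(\bar w\).

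The only delicate point is the inclusion of active index sets: the limit multiplier must be supported on \(I_g(\bar w)\) and not merely on the limits of the sets \(I_g(w^k)\). Continuity of \(g\) settles this. Alternatively, one could cite the stability of Robinson's constraint qualification as in Bonnans--Shapiro, but the direct compactness argument above is self-contained.
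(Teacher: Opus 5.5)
Your proof is correct, but it takes a different route from the paper. The paper gives no argument of its own; it cites Bonnans--Shapiro (Remark~2.88 and Corollary~2.101), identifying MFCQ for a smooth equality--inequality system with Robinson's constraint qualification and reading the lemma off from the stability of Robinson's condition under small perturbations of the reference point. You instead work entirely in the dual, no-nonzero-abnormal-multiplier form of Definition~\ref{def:mfcq_general}. You normalize nonzero abnormal multipliers at nearby feasible points $w^k\to\bar w$ and pass to the limit. The eventual inclusion $I_g(w^k)\subseteq I_g(\bar w)$, from continuity of $g$, guarantees that the limit is an admissible nonzero abnormal multiplier at $\bar w$. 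Because the paper states MFCQ directly in this dual form, your route needs neither the equivalence between MFCQ and Robinson's condition (which rests on a theorem of the alternative such as Motzkin's) nor the metric-regularity machinery behind its stability. It uses only continuity of $g$, $\nabla g$ and $\nabla h$. It is also the same normalization-and-limit device the paper itself uses to bound $\{z^k\}$ and $\{\mu^k\}$ in Lemma~\ref{lem:approximate_kkt_normal_cone_free} and Theorem~\ref{thm:akkt_to_stationarity}; in fact your argument never uses $h(w^k)=0$ or $g(w^k)\le0$. The cited result is, in exchange, far more general: it covers cone-constrained systems in Banach spaces and comes with quantitative stability estimates. None of that extra strength is needed for the way the lemma is used in this paper.
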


We now apply this constraint qualification to the MPCC reformulation. The relevant nonlinear program is the tightened nonlinear program (TNLP) obtained by fixing the local complementarity structure at the reference point \(\bar w\):
\begin{equation}\label{eq:tnlp}
	\begin{aligned}
		\min_{x \in\mathbb R^n,\,y\in \mathbb R^m,\,z\in\mathbb R^{q}} & F(x,y) \\
		\mathrm{s.t.}\quad\ \ \quad & G(x,y) \le 0, \\
		& \nabla_y{\mathcal L}(x,y,z)=0, \\
		& -g_i(x,y) = 0 \;\; (i \in \mathcal{I}_{+0}(\bar w) \cup \mathcal{I}_{00}(\bar w)),
		\quad z_i = 0 \;\; (i \in \mathcal{I}_{0+}(\bar w) \cup \mathcal{I}_{00}(\bar w)),\\
		& -g_i(x,y) \ge 0 \;\; (i \in \mathcal{I}_{0+}(\bar w)),
		\quad z_i \ge 0 \;\; (i \in \mathcal{I}_{+0}(\bar w)).
	\end{aligned}
\end{equation}

We now introduce MPCC-MFCQ for the KKT reformulation, namely, MFCQ applied to the TNLP \eqref{eq:tnlp} \cite{scheel2000mathematical,kanzow2013new}.

\begin{definition}[MPCC-MFCQ]\label{def:mpcc_mfcq}
	Let \(\bar w:=(\bar x,\bar y,\bar z)\) be a feasible point of the MPCC reformulation \eqref{eq:kkt-reform}. We say that MPCC-MFCQ holds at \(\bar w\) if the MFCQ holds at \(\bar w\) for the TNLP \eqref{eq:tnlp} defined by \(\bar w\).
\end{definition}

By the no-nonzero-abnormal-multiplier characterization of MFCQ, MPCC-MFCQ excludes nonzero abnormal multiplier systems for the TNLP \eqref{eq:tnlp}; see, e.g., \cite{ye2020constraint}. We use the following explicit multiplier form to rule out normalized limits of unbounded multiplier sequences. 

\begin{lemma}[No abnormal multiplier under MPCC-MFCQ]\label{lem:mpcc_mfcq_no_abnormal_multiplier}
	Let \(\bar w:=(\bar x,\bar y,\bar z)\) be a feasible point of the MPCC reformulation \eqref{eq:kkt-reform}. Suppose that MPCC-MFCQ holds at \(\bar w\). Define
	\[
	I_G:=\{i\mid G_i(\bar x,\bar y)=0\},\quad
	I_g:=\{i\mid g_i(\bar x,\bar y)=0\},\quad
	I_z:=\{i\mid \bar z_i=0\}.
	\]
	If multipliers $\mu_i\ge0\ (i\in I_G)$, $\omega\in\mathbb R^m$, $\eta_i\in\mathbb R\ (i\in I_g)$, $\nu_i\in\mathbb R\ (i\in I_z)$ satisfy
	\begin{equation}\label{eq:mpcc_mfcq_xy_component_system}
		0=
		\nabla_y\nabla_{x,y}\mathcal L(\bar x,\bar y,\bar z)\omega
		+\sum_{i\in I_g}\eta_i\nabla g_i(\bar x,\bar y)
		+\sum_{i\in I_G}\mu_i\nabla G_i(\bar x,\bar y),
	\end{equation}
	and
	\begin{equation}\label{eq:mpcc_mfcq_y_gradient_component_system}
		\nabla_y g_i(\bar x,\bar y)^\top\omega=0\quad (i\in I_g\setminus I_z),
		\qquad
		\nabla_y g_i(\bar x,\bar y)^\top\omega-\nu_i=0\quad (i\in I_z),
	\end{equation}
	then
	\[
	\mu_i=0\ (i\in I_G),\qquad
	\omega=0,\qquad
	\eta_i=0\ (i\in I_g),\qquad
	\nu_i=0\ (i\in I_z).
	\]
	Consequently, no nonzero multiplier tuple satisfying \eqref{eq:mpcc_mfcq_xy_component_system}--\eqref{eq:mpcc_mfcq_y_gradient_component_system} can exist under MPCC-MFCQ.
\end{lemma}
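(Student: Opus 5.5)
The plan is to show that the stated system is exactly the MFCQ multiplier condition (Definition~\ref{def:mfcq_general}) for the TNLP \eqref{eq:tnlp} at \(\bar w\), written out in components. The conclusion then follows immediately from MPCC-MFCQ (Definition~\ref{def:mpcc_mfcq}).

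\textbf{Step 1: classify the TNLP constraints at \(\bar w\).} The equality constraints of \eqref{eq:tnlp} are:
\begin{itemize}
\item[] the \(m\) equations \(\nabla_y\mathcal L(x,y,z)=0\), with multiplier \(\omega\in\mathbb R^m\);
\item[] \(-g_i(x,y)=0\) for \(i\in\mathcal I_{+0}(\bar w)\cup\mathcal I_{00}(\bar w)\);
\item[] \(z_i=0\) for \(i\in\mathcal I_{0+}(\bar w)\cup\mathcal I_{00}(\bar w)\).
\end{itemize}
The inequality constraints are \(G_j\le0\), \(g_i\le 0\) for \(i\in\mathcal I_{0+}(\bar w)\), and \(-z_i\le0\) for \(i\in\mathcal I_{+0}(\bar w)\).

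At \(\bar w\) we have \(-g_i(\bar x,\bar y)>0\) for \(i\in\mathcal I_{0+}(\bar w)\) and \(\bar z_i>0\) for \(i\in\mathcal I_{+0}(\bar w)\), so these inequalities are inactive. The only active inequalities are therefore \(G_j\le0\) for \(j\in I_G\).

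By feasibility of \(\bar w\) for \eqref{eq:kkt-reform}, we have the index-set identities
\[
\mathcal I_{+0}(\bar w)\cup\mathcal I_{00}(\bar w)=I_g,\qquad
\mathcal I_{0+}(\bar w)\cup\mathcal I_{00}(\bar w)=I_z,\qquad
I_g\setminus I_z=\mathcal I_{+0}(\bar w).
\]

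\textbf{Step 2: write the MFCQ gradient equation in blocks.} Attach multipliers as follows: \(\mu_j\ge0\) to \(G_j\), \(j\in I_G\); \(\omega\) to \(\nabla_y\mathcal L\); \(\eta_i\) to \(g_i\), \(i\in I_g\) (the sign can be absorbed since these are equality multipliers); and \(-\nu_i\) to \(z_i\), \(i\in I_z\). The gradient of \(\nabla_y\mathcal L\) with respect to \(w=(x,y,z)\), applied to \(\omega\), has two parts. Its \((x,y)\)-part is \(\nabla_y\nabla_{x,y}\mathcal L(\bar x,\bar y,\bar z)\omega\), understood via the paper's convention as the transpose Jacobian \(\nabla_{x,y}(\nabla_y\mathcal L)^\top\omega\). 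Its \(z\)-part is \(\nabla_y g(\bar x,\bar y)\omega\), since \(\nabla_y\mathcal L\) is affine in \(z\) with coefficient \(\nabla_y g^\top\).

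The MFCQ gradient equation then splits into three blocks:
\begin{itemize}
\item[] The \((x,y)\)-block is exactly \eqref{eq:mpcc_mfcq_xy_component_system}; the \(z_i\) terms contribute nothing here.
\item[] For \(i\in I_z\), the \(z_i\)-component is \(\nabla_y g_i(\bar x,\bar y)^\top\omega-\nu_i=0\).
\item[] For \(i\notin I_z\), i.e.\ \(i\in\mathcal I_{+0}(\bar w)=I_g\setminus I_z\), the \(z_i\)-component is \(\nabla_y g_i(\bar x,\bar y)^\top\omega=0\). No multiplier appears in this component because \(-z_i\le0\) is inactive.
\end{itemize}
These last two blocks are precisely \eqref{eq:mpcc_mfcq_y_gradient_component_system}.

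For completeness one should also check the remaining indices \(i\in\mathcal I_{0+}(\bar w)\). These have \(\bar z_i=0\), so they lie in \(I_z\) and are already covered by the second block. Hence every index in \(\{1,\dots,q\}\) is accounted for.

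\textbf{Step 3: conclude.} MFCQ at \(\bar w\) for \eqref{eq:tnlp} forces all multipliers of active constraints to vanish, giving \(\mu=0\), \(\omega=0\), \(\eta=0\) and \(\nu=0\).

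The only delicate point is the bookkeeping in Step 2. It requires matching the derivative convention \(\nabla_y\nabla_{x,y}\mathcal L\) with the Jacobian transpose of \(\nabla_y\mathcal L\). It also requires checking that the \(z\)-block contains no \(z\)-gradient multipliers for \(i\in\mathcal I_{+0}(\bar w)\). Both follow directly from the inactivity observed in Step 1.
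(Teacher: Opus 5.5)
Your proposal is correct and follows the paper's approach: the paper gives no separate argument and simply presents the lemma as the explicit no-nonzero-abnormal-multiplier form of MFCQ for the TNLP \eqref{eq:tnlp}. Your bookkeeping makes that identification explicit and is accurate: only $G_j$, $j\in I_G$, is active among the inequalities; $I_g=\mathcal I_{+0}(\bar w)\cup\mathcal I_{00}(\bar w)$, $I_z=\mathcal I_{0+}(\bar w)\cup\mathcal I_{00}(\bar w)$ and $I_g\setminus I_z=\mathcal I_{+0}(\bar w)$; the $z$-block of the Jacobian of $\nabla_y\mathcal L$ applied to $\omega$ gives $\nabla_y g_i(\bar x,\bar y)^\top\omega$; and the transposed-Jacobian reading of $\nabla_y\nabla_{x,y}\mathcal L$ is justified by symmetry of second derivatives, since $f,g$ are $C^2$.
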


We recall the MPCC stationarity notions used below. The three notions considered below share the same basic stationarity system and differ in the sign and complementarity conditions imposed on the multipliers for the biactive index set \(\mathcal I_{00}(\bar{w})\). We use the standard concepts of strong (S-), Mordukhovich (M-), and Clarke (C-) stationarity, following \cite{scheel2000mathematical,jane2005necessary,flegel2005m,kanzow2013new}.

\begin{definition}[MPCC stationarity]\label{def:stationarity}
	Let \(\bar w:=(\bar x,\bar y,\bar z)\) be a feasible point of the MPCC reformulation \eqref{eq:kkt-reform}. We say that \(\bar w\) is \emph{weakly stationary} if there exist multipliers \(\mu\in\mathbb R_+^p\), \(\omega\in\mathbb R^m\), and \(\eta\in\mathbb R^q\) such that
	\begin{equation}\label{eq:mpcc-stationarity}
		\begin{aligned}
			0&=
			\nabla F(\bar x,\bar y)
			+\nabla G(\bar x,\bar y)^\top\mu
			+\nabla_y\nabla_{x,y}\mathcal L(\bar x,\bar y,\bar z)\omega
			+\nabla g(\bar x,\bar y)^\top\eta,\\
			\mu &\ge 0, \quad G(\bar{x},\bar{y}) \le 0, \quad \mu_jG_j(\bar x,\bar y)=0,\ j=1,\ldots,p, \\
			\eta_i &= 0 \quad  \forall\ i \in \mathcal{I}_{0+}(\bar w), \qquad
			\nabla_y g_i(\bar x,\bar y)^\top\omega=0 \quad  \forall\ i \in \mathcal{I}_{+0}(\bar w),
		\end{aligned}
	\end{equation}	
	We say that \(\bar w\) is \emph{strongly stationary} (S-stationary), \emph{Mordukhovich-stationary} (M-stationary), and \emph{Clarke-stationary} (C-stationary), respectively, if there exist \(\mu\in\mathbb R_+^p\), \(\omega\in\mathbb R^m\), and \(\eta\in\mathbb R^q\) satisfying \eqref{eq:mpcc-stationarity} and, for every \(i\in\mathcal I_{00}(\bar w)\), the condition below holds:
	\begin{equation*}
		\begin{aligned}
			&\eta_i\ge0,\quad
			\nabla_y g_i(\bar x,\bar y)^\top\omega\ge0,\\
			&\text{either }
			\eta_i>0\ \text{and}\
			\nabla_y g_i(\bar x,\bar y)^\top\omega>0,
			\quad\text{or}\quad
			\eta_i\nabla_y g_i(\bar x,\bar y)^\top\omega=0,\\
			&\eta_i\nabla_y g_i(\bar x,\bar y)^\top\omega\ge0,
		\end{aligned}
	\end{equation*}
	respectively.
\end{definition}

In the TNLP multiplier system, the multiplier \(\nu_i\) corresponds to the fixed constraint \(z_i=0\). In the stationarity notions above, this multiplier is eliminated through the \(z\)-component stationarity equation, namely $\nu_i=\nabla_y g_i(\bar x,\bar y)^\top\omega$, for any $i\in \mathcal I_{0+}(\bar w)\cup \mathcal I_{00}(\bar w)$, which is precisely the zero-\(z\) index set.

The above stationarity notions differ only in their treatment of the biactive index set \(\mathcal I_{00}\), and satisfy 
\[
\text{S-stationarity} \quad\Longrightarrow\quad \text{M-stationarity} \quad\Longrightarrow\quad \text{C-stationarity}.
\]
However, S-stationarity is not a necessary optimality condition for MPCCs in general; it usually requires strong MPCC-tailored constraint qualifications, such as MPCC-LICQ. M-stationarity is weaker than S-stationarity but is often the sharper first-order necessary condition available under weaker constraint qualifications. In contrast, C-stationarity is often too weak to exclude certain first-order descent directions at biactive indices; see, e.g., \cite{scheel2000mathematical,outrata1999optimality,jane2005necessary,flegel2005m,kanzow2013new}.

\section{Approximate KKT Condition and Convergence to \\ C-Stationary Points}\label{sec3}
In this section, we establish limiting stationarity properties for approximate first-order points of the regularized gap-function reformulation \eqref{reformualtion_GP}. We introduce an approximate KKT condition for \eqref{reformualtion_GP} and relate points satisfying this condition to C-stationarity of the KKT reformulation \eqref{eq:kkt-reform}.
Since \(\mathcal G_\gamma(x,y,z)\ge0\), two natural approximation schemes for \eqref{reformualtion_GP} are the penalty formulation
\begin{equation}\label{eq:penalized}
	\min_{(x,y,z) \in \Omega \times \mathbb R_+^{q} } \quad F(x, y) + \rho \mathcal G_\gamma(x,y,z).
\end{equation}
where \(\rho\) is the penalty parameter, and the relaxed formulation
\begin{equation}\label{eq:relax}
	\min_{(x,y,z)\in\Omega\times\mathbb R_+^{q}}
	\quad F(x,y)
	\quad
	\text{s.t.}\quad
	\mathcal G_\gamma(x,y,z)\le s_\mathcal G,
\end{equation}
where \(s_\mathcal G>0\) is the relaxation parameter.

Because these approximation problems are generally nonconvex, one typically obtains only approximate stationary points. We therefore study accumulation points generated as the penalty parameter $\rho \to \infty$ or the relaxation parameter $s_\mathcal G \to 0$. The common first-order form of \eqref{eq:penalized} and \eqref{eq:relax} leads to the following approximate KKT condition for the gap-function reformulation \eqref{reformualtion_GP}.

\begin{definition}
	We say that a point $(\bar{x}, \bar{y})$ satisfies the approximate KKT condition for (GP) in \eqref{reformualtion_GP} if there exist sequences $\{(x^k, y^k, z^k)\} \subset \mathbb{R}^{n+m+q}$, $\{ \varepsilon^k \} \subset \mathbb{R}^{n+m+q}$, and $\{\rho^k\} \subset \mathbb{R}_{++}$ such that
	\begin{equation}\label{approximate_kkt}
		\begin{aligned}
			&\lim_{k\to\infty} (x^k, y^k) = (\bar{x}, \bar{y}), \quad \lim_{k\to\infty} \varepsilon^k = 0, \quad (x^k,y^k,z^k) \in \Omega \times \mathbb{R}_+^{q}, \quad \lim_{k\to\infty} \mathcal{G}_\gamma(x^k,y^k,z^k) = 0, \\
			&\varepsilon^k \in \nabla F(x^k,y^k) + \rho^k\nabla \mathcal{G}_\gamma(x^k,y^k,z^k) + \mathcal{N}_{\Omega \times \mathbb{R}_+^{q}}(x^k,y^k,z^k),
		\end{aligned}
	\end{equation}
	where, for notational simplicity, the gradient \(\nabla F(x^k,y^k)\) is embedded into the \((x,y,z)\)-space by assigning zero entries to the components of absent variables.
\end{definition}

Given an approximate KKT point $(\bar{x},\bar{y})$ of (GP), our objective is to characterize its corresponding stationarity properties. If the multiplier sequence $\{\rho^k\}$ for $\mathcal{G}_\gamma$ contains a bounded subsequence, the conclusion follows directly by passing to the limit in \eqref{approximate_kkt}. Under standard continuity assumptions on the problem data and their derivatives, this yields a classical KKT-type stationarity condition for the reformulation \eqref{reformualtion_GP}. The remaining case is $\rho^k \to \infty$.
In this unbounded-multiplier case, we analyze the limiting stationarity condition via the MPCC reformulation \eqref{eq:kkt-reform}.

Throughout this section, we use the shorthand \(w=(x,y,z)\). For a feasible point \(\bar w=(\bar x,\bar y,\bar z)\) of the MPCC reformulation \eqref{eq:kkt-reform}, we use the MPCC index-set notation introduced in \eqref{MPCC_index_set}. In addition, for any \(z\in\mathbb R_+^q\), define
\[
I_+(z):=\{i\mid z_i>0\},
\quad
I_0(z):=\{i\mid z_i=0\}.
\]

The next lemma gives the estimates used to compare the approximate KKT condition for (GP) with the stationarity system of the MPCC reformulation \eqref{eq:kkt-reform}.

\begin{lemma}\label{lem:approximate_kkt_normal_cone_free}
	Suppose that $(\bar{x}, \bar{y})$ satisfies the approximate KKT condition for $\mathrm{(GP)}$. Let\linebreak[4] $\{(x^k, y^k, z^k)\}\subset \mathbb{R}^{n+m+q}$, $\{ \varepsilon^k \} \subset \mathbb{R}^{n+m+q}$, and $\{\rho^k\} \subset \mathbb{R}_{++}$ satisfy \eqref{approximate_kkt}. If the MFCQ holds at $\bar{y}$ for the lower-level constraint system $\{ y \mid g(\bar{x}, y) \le 0 \}$, then $\{z^k\}$ is bounded.
	
	Furthermore, if $\bar{z} \in \mathbb{R}_+^q$ is an accumulation point of the sequence $\{z^k\}$, then $\bar{z}_i g_i(\bar{x}, \bar{y}) = 0$ for all $i = 1, \ldots , q$, and $\bar{w} := (\bar{x}, \bar{y}, \bar{z})$ is a feasible point for both $\mathrm{(GP)}$ in \eqref{reformualtion_GP} and the MPCC reformulation in \eqref{eq:kkt-reform}.
	
	Suppose, in addition, that the MFCQ holds at $(\bar{x}, \bar{y})$ for the upper-level inequality system $\{ (x,y) \mid G(x,y) \le 0 \}$. By partitioning $\varepsilon^k = (\varepsilon_{x,y}^k, \varepsilon_{z}^k)$ with $\varepsilon_{x,y}^k \in \mathbb{R}^{n+m}$ and $\varepsilon_z^k \in \mathbb{R}^q$, there exists a multiplier sequence $\{\mu^k\} \subset \mathbb{R}_+^p$ such that $\mu_i^k G_i(x^k,y^k) = 0$ for $i=1,\ldots,p$, and
	\begin{equation}\label{eq:old-gap-gradient-combined}
		\varepsilon_{x,y}^k + \rho^k r^k = \nabla F(x^k,y^k) + \nabla G(x^k,y^k)^\top \mu^k + \nabla_y\nabla_{x,y}{\mathcal{L}}(x^k,y^k,z^k)\omega^k + \nabla g(x^k,y^k)^\top \eta^k,
	\end{equation}
	where
	\[
	\lambda_k^* := \lambda^*(x^k,y^k, z^k), \quad \theta_k^* := \theta^*(x^k,y^k, z^k), \quad  \omega^k := -\rho^k(\theta_k^* - y^k), \quad \eta^k := \rho^k(\lambda_k^* - z^k),
	\]
	and $r^k \in \mathbb{R}^{n+m}$ is a residual vector satisfying \(r^k=0\) whenever \(\theta_k^*=y^k\), and \(\|r^k\|/|\theta_k^*-y^k\|\to0\) otherwise, as $k \to \infty$. Moreover, the following conditions hold:
	\begin{equation}\label{eq:approximate_kkt_opened_z}
		\begin{aligned}
			g_i(x^k,\theta_k^*) &= \frac{\lambda_{k,i}^* - z_i^k}{\gamma_2} - \frac{\varepsilon_{z,i}^k}{\rho^k}, \quad &&i \in I_+(z^k),\\
			g_i(x^k,\theta_k^*) &\le \frac{\lambda_{k,i}^* - z_i^k}{\gamma_2} - \frac{\varepsilon_{z,i}^k}{\rho^k}, \quad &&i \in I_0(z^k),
		\end{aligned}
	\end{equation}
	where $\lambda_{k,i}^*$ and $\varepsilon_{z,i}^k$ denote the $i$-th components of the vectors $\lambda_k^*$ and $\varepsilon_z^k$, respectively.
\end{lemma}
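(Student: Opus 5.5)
The plan has three parts: boundedness of $\{z^k\}$, feasibility of the limit, and the multiplier identity. Throughout, Lemma~\ref{lem:vanishing_gap_residuals} is the basic tool. Since $\mathcal G_\gamma(x^k,y^k,z^k)\to0$, it gives $\theta_k^*-y^k\to0$ and $\lambda_k^*-z^k\to0$.

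\emph{Boundedness of $\{z^k\}$.} Argue by contradiction. Suppose $\|z^k\|\to\infty$ along a subsequence, and normalize $z^k/\|z^k\|\to\bar\zeta\ge0$ with $\|\bar\zeta\|=1$.

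Two facts feed the contradiction. First, $\lambda_k^*=[z^k+\gamma_2 g(x^k,y^k)]_+$ and $\lambda_k^*-z^k\to0$. Hence $z^k_i\to\infty$ forces $\gamma_2 g_i(x^k,y^k)=\lambda^*_{k,i}-z^k_i\to0$ eventually, so $g_i(\bar x,\bar y)=0$ whenever $\bar\zeta_i>0$. Second, the optimality condition for $\theta_k^*$ reads $\nabla_y f(x^k,\theta_k^*)+\nabla_y g(x^k,\theta_k^*)^\top z^k+(\theta_k^*-y^k)/\gamma_1=0$.

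Divide this optimality condition by $\|z^k\|$ and pass to the limit, using $\theta_k^*\to\bar y$. This yields $\nabla_y g(\bar x,\bar y)^\top\bar\zeta=0$ with $\bar\zeta$ supported on active indices. That contradicts MFCQ for $\{y\mid g(\bar x,y)\le0\}$ at $\bar y$, so $\{z^k\}$ is bounded.

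\emph{Feasibility of the limit.} Let $z^k\to\bar z$ along a subsequence. The same identity $\lambda_k^*-z^k\to0$ gives $\bar z=[\bar z+\gamma_2 g(\bar x,\bar y)]_+$, which is equivalent to $\bar z\ge0$, $g(\bar x,\bar y)\le0$ and $\bar z_ig_i(\bar x,\bar y)=0$. Passing to the limit in the optimality condition for $\theta_k^*$ gives $\nabla_y\mathcal L(\bar x,\bar y,\bar z)=0$. Closedness of $\Omega$ gives $G(\bar x,\bar y)\le0$. So $\bar w$ is feasible for \eqref{eq:kkt-reform}. By continuity of $\mathcal G_\gamma$ (Lemma~\ref{smooth_G}) we get $\mathcal G_\gamma(\bar w)=0$, so $\bar w$ is also feasible for (GP).

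\emph{Multiplier identity.} The normal cone splits as $\mathcal N_{\Omega\times\mathbb R^q_+}=\mathcal N_\Omega\times\mathcal N_{\mathbb R^q_+}$. By MFCQ at $(\bar x,\bar y)$ and its stability (Lemma~\ref{lem:mfcq_stability}), for large $k$ we have $\mathcal N_\Omega(x^k,y^k)=\{\nabla G^\top\mu^k:\mu^k\ge0,\ \mu^k_iG_i=0\}$; here convexity of $G$ helps as well.

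For the $z$-block, the inclusion gives $\varepsilon^k_z=\rho^k\big(-(z^k-\lambda_k^*)/\gamma_2-g(x^k,\theta_k^*)\big)+\nu^k$, with $\nu^k_i=0$ for $i\in I_+(z^k)$ and $\nu^k_i\le0$ for $i\in I_0(z^k)$. Dividing by $\rho^k$ yields \eqref{eq:approximate_kkt_opened_z}.

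For the $(x,y)$-block, I will rewrite $\rho^k$ times the $(x,y)$ part of \eqref{Ggradient}. Write $\lambda_k^*=z^k+\eta^k/\rho^k$. The terms $\nabla_x f(x^k,y^k)+\nabla_x g(x^k,y^k)^\top z^k-\nabla_x f(x^k,\theta_k^*)-\nabla_x g(x^k,\theta_k^*)^\top z^k$ give $\nabla_x\mathcal L(x^k,y^k,z^k)-\nabla_x\mathcal L(x^k,\theta_k^*,z^k)$. By a Taylor expansion this equals $\nabla_y\nabla_x\mathcal L(x^k,y^k,z^k)(y^k-\theta_k^*)+o(\|\theta_k^*-y^k\|)$, using the $C^2$ assumption and boundedness of $z^k$.

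For the $y$-part, I will use the $\theta$-optimality condition. It says $-(y^k-\theta_k^*)/\gamma_1=\nabla_y\mathcal L(x^k,\theta_k^*,z^k)$, so that component becomes $\nabla_y\mathcal L(x^k,y^k,z^k)-\nabla_y\mathcal L(x^k,\theta_k^*,z^k)=\nabla_{yy}\mathcal L(y^k-\theta_k^*)+o(\cdot)$. The remaining $\nabla g(x^k,y^k)^\top(\lambda_k^*-z^k)$ terms, multiplied by $\rho^k$, give $\nabla g^\top\eta^k$.

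Collecting terms and moving the Taylor remainders, times $\rho^k$, into $-\rho^kr^k$ produces \eqref{eq:old-gap-gradient-combined} with $\omega^k=-\rho^k(\theta_k^*-y^k)$. The property $\|r^k\|/\|\theta_k^*-y^k\|\to0$ comes from uniform continuity of second derivatives on a compact neighborhood, and $r^k=0$ when $\theta_k^*=y^k$ is immediate.

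The main obstacle is getting the sign and index bookkeeping exactly right. This concerns the Hessian-vector identity with the paper's $\nabla_y\nabla_{x,y}\mathcal L$ convention and the handling of the uniform $o(\cdot)$ remainder. The boundedness argument is routine by comparison.
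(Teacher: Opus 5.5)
Your proposal is correct and follows essentially the same route as the paper: a contradiction argument with normalized $z^k$ against the lower-level MFCQ, then the gradient formula \eqref{Ggradient}, the $\theta$-optimality condition, and a Taylor expansion of $\nabla_{x,y}\mathcal L$ in $y$. The only variations are that you obtain MPCC feasibility directly from $\lambda_k^*-z^k\to0$ and the limiting $\theta$-optimality condition, rather than via $\mathcal G_\gamma(\bar w)=0$ and Lemma~\ref{lem:gap_nonnegative_exact}, and that you mis-state the $\theta$-optimality condition, which should read $(y^k-\theta_k^*)/\gamma_1=\nabla_y\mathcal L(x^k,\theta_k^*,z^k)$ (this sign slip is harmless, since the $y$-component you then derive is the correct one).
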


\begin{proof}
	We first establish the boundedness of the sequence $\{z^k\}$. Suppose, for the sake of contradiction, that $\{z^k\}$ is unbounded, i.e., $\|z^k\| \to \infty$. We may pass to a subsequence, if necessary, to assume that 
	\[
	\frac{z^k}{\|z^k\|} \to \hat{z}, \quad  \hat{z} \ge 0, \quad  \|\hat{z}\| = 1.
	\]
	By the first-order optimality condition for the subproblem \eqref{eq:deftheta} defining $\theta_k^* := \theta^*(x^k,y^k,z^k)$, we obtain
	\begin{equation}\label{eq:stationary-condition-theta}
		0 = \nabla_y f(x^k,\theta_k^*) + \nabla_y g(x^k,\theta_k^*)^\top z^k + \frac{1}{\gamma_1}(\theta_k^* - y^k).
	\end{equation}
	Since $\lim_{k\to\infty}\mathcal{G}_\gamma(x^k,y^k,z^k) = 0$, Lemma \ref{lem:vanishing_gap_residuals} forces $\theta_k^* - y^k \to 0$. Combining with $y^k \to \bar{y}$ yields $\theta_k^* \to \bar{y}$. Dividing \eqref{eq:stationary-condition-theta} by $\|z^k\|$, passing to the limit as $k \to \infty$, and invoking the continuity of $\nabla_y f$ and $\nabla_y g$, we deduce
	\begin{equation}\label{eq:zero-normal-g}
		\nabla_y g(\bar{x},\bar{y})^\top \hat{z} = 0.
	\end{equation}
	
	Next, we demonstrate that \eqref{eq:zero-normal-g} is incompatible with the MFCQ for the lower-level constraint system $\{ y \mid g(\bar{x}, y) \le 0 \}$. By Lemma~\ref{lem:vanishing_gap_residuals}, we have \(\lambda_k^*-z^k\to0\). Equivalently, for every $i = 1, \dots, q$,
	\begin{equation}\label{eq:limit-z}
		\mathrm{Proj}_{\mathbb{R}_+}\left( z_i^k + \gamma_2 g_i(x^k,y^k) \right) - z_i^k \to 0.
	\end{equation}
	For any fixed index $i \notin I_g(\bar{x},\bar{y})$, we have $g_i(\bar{x},\bar{y}) < 0$. By continuity, $g_i(x^k,y^k) < 0$ for all sufficiently large $k$. The limit \eqref{eq:limit-z} forces $z_i^k \to 0$, and consequently $\hat{z}_i = 0$ for every $i \notin I_g(\bar{x},\bar{y})$. Therefore, \eqref{eq:zero-normal-g} reduces to
	\[
	\sum_{i \in I_g(\bar{x},\bar{y})} \hat{z}_i \nabla_y g_i(\bar{x},\bar{y}) = 0.
	\]
	Because $\hat{z} \ge 0$, $\hat{z}_i = 0$ for all $i \notin I_g(\bar{x},\bar{y})$, and $\|\hat{z}\| = 1 \neq 0$, this contradicts the MFCQ for the lower-level constraint system $g(\bar{x},\cdot) \le 0$ at $\bar{y}$. Hence, the sequence $\{z^k\}$ must be bounded.

	Now let \(\bar z\in\mathbb R_+^q\) be an arbitrary accumulation point of \(\{z^k\}\). Passing to a subsequence if necessary, we may assume that \(z^k\to\bar z\). Since \((x^k,y^k)\in\Omega\), \(z^k\in\mathbb R_+^q\), and \((x^k,y^k,z^k)\to(\bar x,\bar y,\bar z)\), the closedness of \(\Omega\) and \(\mathbb R_+^q\) yields \((\bar x,\bar y)\in\Omega\) and \(\bar z\in\mathbb R_+^q\). In particular, this guarantees $G(\bar{x},\bar{y}) \le 0$. Moreover, by the continuity of \(\mathcal G_\gamma\),
	\[
	\mathcal G_\gamma(\bar x,\bar y,\bar z)
	=
	\lim_{k\to\infty}
	\mathcal G_\gamma(x^k,y^k,z^k)
	=0.
	\]
	Therefore, \(\bar w:=(\bar x,\bar y,\bar z)\) is feasible for \(\mathrm{(GP)}\). By Lemma~\ref{lem:gap_nonnegative_exact}, we have $\bar y\in S(\bar x)$, and $\bar z\in\mathcal M(\bar x,\bar y)$. Consequently, the relations $g(\bar{x},\bar{y}) \le 0$, $\bar{z} \ge 0$, $\bar{z}^\top g(\bar{x},\bar{y}) = 0$, and
	\[
	\nabla_y f(\bar{x},\bar{y}) + \nabla_y g(\bar{x},\bar{y})^\top \bar{z} = \nabla_y \mathcal{L}(\bar{x},\bar{y},\bar{z}) = 0
	\]
	must hold. These are precisely the feasibility conditions of the MPCC reformulation \eqref{eq:kkt-reform} for $\bar{w} := (\bar{x},\bar{y},\bar{z})$.
	
	It remains to establish the multiplier representation. The $(x,y)$-component of the approximate KKT inclusion \eqref{approximate_kkt} yields
	\begin{equation}\label{eq:approximate_kkt_split}
		\varepsilon_{x,y}^k \in \nabla F(x^k,y^k) + \rho^k\nabla_{x,y}\mathcal{G}_\gamma(x^k,y^k,z^k) + \mathcal{N}_{\Omega}(x^k,y^k).
	\end{equation}
	Since \(G\) is continuously differentiable and MFCQ holds at \((\bar x,\bar y)\) for the upper-level inequality system $G(x,y) \le 0$, Lemma~\ref{lem:mfcq_stability} implies that the MFCQ also holds for this system at $(x^k,y^k)$ for all sufficiently large $k$. Therefore, the standard normal-cone representation for smooth inequality systems applies (see, e.g., \cite[Theorem 6.14 and Example 6.40]{rockafellar1998variational}):
	\[
	\mathcal{N}_{\Omega}(x^k,y^k) = \left\{ \nabla G(x^k,y^k)^\top \mu \ \middle|\ \mu \in \mathbb{R}_+^p,\ \mu_i G_i(x^k,y^k) = 0,\ i=1,\ldots,p \right\}.
	\]
	Substituting this representation into \eqref{eq:approximate_kkt_split}, we obtain a multiplier sequence $\{\mu^k\} \subset \mathbb{R}_+^p$ satisfying
	\begin{equation}\label{eq:approximate_kkt_opened_xy}
		\mu_i^k G_i(x^k,y^k) = 0, i=1,\ldots,p, \quad \text{and} \quad \varepsilon_{x,y}^k = \nabla F(x^k,y^k) + \nabla G(x^k,y^k)^\top \mu^k + \rho^k\nabla_{x,y}\mathcal{G}_\gamma(x^k,y^k,z^k).
	\end{equation}
	
	Applying the gradient formula for $\mathcal{G}_\gamma$ from \eqref{Ggradient}, evaluated at $(x^k,y^k,z^k)$, we have
	\begin{equation}\label{eq:grad_xy-G}
		\begin{aligned}
			\nabla_{x,y} \mathcal{G}_\gamma(x^k,y^k,z^k)
			&= \begin{pmatrix}
				\nabla_x f(x^k,y^k) + \nabla_x g(x^k,y^k)^\top \lambda_k^* \\
				\nabla_y f(x^k,y^k) + \nabla_y g(x^k,y^k)^\top \lambda_k^* \end{pmatrix} -
			\begin{pmatrix}
				\nabla_x f(x^k,\theta_k^*) + \nabla_x g(x^k,\theta_k^*)^\top z^k \\
				(y^k - \theta_k^*) / \gamma_1
			\end{pmatrix} \\
			&= \begin{pmatrix}
				\nabla_x f(x^k,y^k) + \nabla_x g(x^k,y^k)^\top \lambda_k^* \\
				\nabla_y f(x^k,y^k) + \nabla_y g(x^k,y^k)^\top \lambda_k^* \end{pmatrix} -
			\begin{pmatrix}
				\nabla_x f(x^k,\theta_k^*) + \nabla_x g(x^k,\theta_k^*)^\top z^k \\
				\nabla_y f(x^k,\theta_k^*) + \nabla_y g(x^k,\theta_k^*)^\top z^k
			\end{pmatrix} \\
			&= \nabla g(x^k,y^k)^\top(\lambda_k^* - z^k) - \left(\nabla_{x,y}\mathcal{L}(x^k,\theta_k^*,z^k) - \nabla_{x,y}\mathcal{L}(x^k,y^k,z^k)\right),
		\end{aligned}
	\end{equation}
	where the second equality follows from the first-order stationarity condition for $\theta_k^*$ given in \eqref{eq:stationary-condition-theta}.
	
	Because $x^k \in X$, $(x^k, y^k) \to (\bar{x}, \bar{y})$, $\theta_k^* \to \bar{y}$, and the sequence $\{z^k\}$ is bounded, all points under consideration reside within a bounded neighborhood. Moreover, by Assumption~\ref{asup_stationarity_lower}, the second derivatives of $f$ and $g_i$ ($i=1,\ldots,q$) are continuous around $(x^k, y^k)$. Applying Taylor's theorem at $y^k$ to the mapping $y \mapsto \nabla_{x,y}\mathcal{L}(x^k, y, z^k)$ yields a residual vector $r^k \in \mathbb{R}^{n+m}$ satisfying \(r^k=0\) whenever \(\theta_k^*=y^k\), and \(\|r^k\|/|\theta_k^*-y^k\|\to0\) otherwise, as $k \to \infty$ and
	\[
	\nabla_{x,y}\mathcal{L}(x^k,\theta_k^*,z^k) - \nabla_{x,y}\mathcal{L}(x^k,y^k,z^k) = \nabla_y\nabla_{x,y}\mathcal{L}(x^k,y^k,z^k)(\theta_k^* - y^k) + r^k.
	\]
	Substituting this Taylor expansion into \eqref{eq:grad_xy-G} produces
	\[
	\nabla_{x,y}\mathcal{G}_\gamma(x^k,y^k,z^k) = \nabla g(x^k,y^k)^\top(\lambda_k^* - z^k) - \nabla_y\nabla_{x,y}\mathcal{L}(x^k,y^k, z^k)(\theta_k^* - y^k) - r^k.
	\]
	Multiplying this relation by $\rho^k$ and utilizing the definitions
	\[
	\omega^k := -\rho^k(\theta_k^* - y^k), \quad \eta^k := \rho^k(\lambda_k^* - z^k),
	\]
	we obtain
	\begin{equation}\label{lem2_eq:old-gap-gradient-combined}
		\rho^k\nabla_{x,y}\mathcal{G}_\gamma(x^k,y^k,z^k) = \nabla_y\nabla_{x,y}\mathcal{L}(x^k,y^k,z^k)\omega^k + \nabla g(x^k,y^k)^\top \eta^k - \rho^k r^k.
	\end{equation}
	Substituting \eqref{lem2_eq:old-gap-gradient-combined} into \eqref{eq:approximate_kkt_opened_xy} yields the $(x,y)$-component of the multiplier representation:
	\[
	\varepsilon_{x,y}^k + \rho^k r^k = \nabla F(x^k,y^k) + \nabla G(x^k,y^k)^\top \mu^k + \nabla_y\nabla_{x,y}\mathcal{L}(x^k,y^k,z^k)\omega^k + \nabla g(x^k,y^k)^\top \eta^k.
	\]
	
	Next, we derive the componentwise relations for the $z$-variables. From the gradient formula for $\mathcal{G}_\gamma$ in \eqref{Ggradient},
	\[
	\nabla_z\mathcal{G}_\gamma(x^k,y^k,z^k) = \frac{\lambda_k^* - z^k}{\gamma_2} - g(x^k,\theta_k^*).
	\]
	Combining this identity with the $z$-component of \eqref{approximate_kkt}, namely
	\[
	\varepsilon_z^k \in \rho^k\nabla_z\mathcal{G}_\gamma(x^k,y^k,z^k) + \mathcal{N}_{\mathbb{R}_+^q}(z^k),
	\]
	and applying the definition of the normal cone to the non-negative orthant $\mathbb{R}_+^q$, we deduce the following alternatives for each index $i$: if $z_i^k > 0$, then $\varepsilon_{z,i}^k - \rho^k\nabla_{z_i}\mathcal{G}_\gamma(x^k,y^k,z^k) = 0$; if $z_i^k = 0$, then $\varepsilon_{z,i}^k - \rho^k\nabla_{z_i}\mathcal{G}_\gamma(x^k,y^k,z^k) \le 0$. Rearranging these terms algebraically yields
	\begin{equation*}
		g_i(x^k,\theta_k^*) = \frac{\lambda_{k,i}^* - z_i^k}{\gamma_2} - \frac{\varepsilon_{z,i}^k}{\rho^k}, \ \text{if } z_i^k > 0, \quad g_i(x^k,\theta_k^*) \le \frac{\lambda_{k,i}^* - z_i^k}{\gamma_2} - \frac{\varepsilon_{z,i}^k}{\rho^k}, \ \text{if } z_i^k = 0.
	\end{equation*}
	These conditions match the claims in the lemma exactly.
	This completes the proof.
\end{proof}

With the multiplier sequence constructed above, we now compare the limiting approximate KKT system with the stationarity system of the MPCC reformulation \eqref{eq:kkt-reform} under the additional MPCC-MFCQ assumption.

\begin{theorem}\label{thm:akkt_to_stationarity}
	Suppose that $(\bar{x},\bar{y})$ satisfies the approximate KKT condition for $\mathrm{(GP)}$. Let\linebreak[4] $\{(x^k, y^k, z^k)\} \subset \mathbb{R}^{n+m+q}$, $\{ \varepsilon^k \} \subset \mathbb{R}^{n+m+q}$, and $\{\rho^k\} \subset \mathbb{R}_{++}$ satisfy \eqref{approximate_kkt}. 
	Assume that MFCQ holds at $\bar{y}$ for the lower-level constraint system $\{ y \mid g(\bar{x}, y) \le 0 \}$, and that MFCQ holds at $(\bar{x}, \bar{y})$ for the upper-level inequality system $\{ (x,y) \mid G(x,y) \le 0 \}$. 
	Let $\bar{z} \in \mathbb{R}_+^q$ be any accumulation point of the sequence $\{z^k\}$.
	Then the following stationarity conditions hold for $\bar{w} := (\bar{x},\bar{y},\bar{z})$:
	\begin{enumerate}
		\item If $\{\rho^k\}$ is bounded, then $\bar{w}$ is S-stationary for \eqref{eq:kkt-reform}.
		\item If $\rho^k\to+\infty$, and we further assume that MPCC-MFCQ holds at $\bar{w}$ for the MPCC reformulation \eqref{eq:kkt-reform}, then $\bar{w}$ is C-stationary for \eqref{eq:kkt-reform}.
	\end{enumerate}
\end{theorem}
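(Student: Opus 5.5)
The plan is to pass to the limit in the multiplier representation \eqref{eq:old-gap-gradient-combined} of Lemma~\ref{lem:approximate_kkt_normal_cone_free}. After extracting a subsequence with $z^k\to\bar z$, that lemma gives feasibility of $\bar w$ for \eqref{eq:kkt-reform} and multipliers $\mu^k,\omega^k,\eta^k$. By continuity, $\mu^k_j=0$ for $j\notin I_G(\bar x,\bar y)$ for large $k$. I also record the estimate
\[
\|\rho^k r^k\|=o(\|\theta_k^*-y^k\|)\,\rho^k=o(\|\omega^k\|).
\]

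\emph{Case 1 ($\rho^k$ bounded).} By Lemma~\ref{lem:vanishing_gap_residuals}, $\omega^k=-\rho^k(\theta_k^*-y^k)\to0$, $\eta^k=\rho^k(\lambda_k^*-z^k)\to0$ and $\rho^k r^k\to0$.
\begin{itemize}
\item \eqref{eq:old-gap-gradient-combined} then reads $\nabla F+\nabla G^\top\mu^k\to0$.
\item $\{\mu^k\}$ is bounded: otherwise divide by $\|\mu^k\|$ and obtain a nonzero nonnegative $\hat\mu$ supported on $I_G$ with $\nabla G(\bar x,\bar y)^\top\hat\mu=0$, contradicting MFCQ for $G\le0$.
\item A limit $\bar\mu$ gives \eqref{eq:mpcc-stationarity} with $\omega=0$, $\eta=0$. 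These trivially satisfy the S-stationarity sign conditions on $\mathcal I_{00}$.
\end{itemize}

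\emph{Case 2 ($\rho^k\to\infty$): componentwise relations.} I would derive relations on index sets from \eqref{eq:approximate_kkt_opened_z}, written as
\[
\rho^k g_i(x^k,\theta_k^*)\ \{=,\le\}\ \eta_i^k/\gamma_2-\varepsilon^k_{z,i}.
\]
A Taylor expansion gives
\[
\rho^k g_i(x^k,\theta_k^*)=\rho^k g_i(x^k,y^k)-\nabla_y g_i(x^k,y^k)^\top\omega^k+o(\|\omega^k\|).
\]
\begin{itemize}
\item For $i\in\mathcal I_{0+}$: if $z_i^k>0$, the equality would give $g_i(x^k,\theta_k^*)=-z_i^k/\gamma_2-\varepsilon_{z,i}^k/\rho^k\to0$ (since $\lambda^*_{k,i}=0$ eventually), contradicting $g_i(\bar x,\bar y)<0$. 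Hence $z_i^k=0$ and $\eta_i^k=0$ eventually.
\item For $i\in\mathcal I_{+0}$: eventually $z_i^k>0$ and $\lambda^*_{k,i}=z_i^k+\gamma_2 g_i>0$, so $\eta_i^k=\rho^k\gamma_2 g_i(x^k,y^k)$. The equality then yields $\nabla_y g_i(x^k,y^k)^\top\omega^k=\varepsilon^k_{z,i}+o(\|\omega^k\|)$.
\item For $i\in\mathcal I_{00}$, split into two branches.
  \begin{itemize}
  \item If $z_i^k+\gamma_2 g_i\ge0$: as before $\eta_i^k=\rho^k\gamma_2 g_i$, and $\nabla_y g_i^\top\omega^k=\varepsilon_{z,i}^k+o(\|\omega^k\|)$ up to a one-sided inequality when $z_i^k=0$.
  \item If $z_i^k+\gamma_2g_i<0$: then $\eta_i^k=-\rho^kz_i^k\le0$ and $\rho^kg_i<\eta_i^k/\gamma_2$. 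Plugging this in gives $\nabla_y g_i^\top\omega^k\le\varepsilon_{z,i}^k+o(\|\omega^k\|)$ when $z_i^k>0$, while $\eta_i^k=0$ when $z_i^k=0$.
  \end{itemize}
  So along each subsequence, either $\eta_i^k=0$, or $\nabla_y g_i^\top\omega^k$ is asymptotically negligible, or both quantities are asymptotically nonpositive.
\end{itemize}

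\emph{Case 2: boundedness and limit.} Let $t^k:=\|(\mu^k,\omega^k,\eta^k_{I_g})\|$ and suppose $t^k\to\infty$ along a subsequence.
\begin{itemize}
\item Divide \eqref{eq:old-gap-gradient-combined} and the relations above by $t^k$. The terms $\varepsilon^k/t^k$, $\rho^kr^k/t^k$ and the $o(\|\omega^k\|)/t^k$ terms vanish.
\item The normalized limit $(\hat\mu,\hat\omega,\hat\eta)\neq0$ satisfies \eqref{eq:mpcc_mfcq_xy_component_system} with $\hat\eta$ supported on $I_g$. It also satisfies $\nabla_y g_i^\top\hat\omega=0$ on $\mathcal I_{+0}=I_g\setminus I_z$.
\item Setting $\hat\nu_i:=\nabla_y g_i^\top\hat\omega$ on $I_z$ gives \eqref{eq:mpcc_mfcq_y_gradient_component_system}.
\item This contradicts Lemma~\ref{lem:mpcc_mfcq_no_abnormal_multiplier}, so the multipliers are bounded.
\end{itemize}
Taking a convergent subsequence and passing to the limit in \eqref{eq:old-gap-gradient-combined}, where now $\rho^kr^k\to0$ because $\omega^k$ is bounded, yields \eqref{eq:mpcc-stationarity}. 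The case analysis for $\mathcal I_{00}$ passes to the limit as $\eta_i\nabla_y g_i^\top\omega\ge0$, which is C-stationarity.

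The main obstacle I expect is the $\mathcal I_{00}$ sign analysis. It has to be done uniformly in both the normalized and the bounded regime, keeping the error terms in the form $o(\|\omega^k\|)+\|\varepsilon^k\|$ so that they vanish after division by $\max\{1,t^k\}$. It also requires passing to subsequences on which each biactive index stays in a fixed branch.
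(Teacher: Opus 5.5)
Your proposal is correct and follows the paper's proof essentially step by step. The shared steps are: the multiplier representation from Lemma~\ref{lem:approximate_kkt_normal_cone_free}; MFCQ-based boundedness of $\mu^k$ for bounded $\rho^k$; vanishing of $z_i^k,\eta_i^k$ on $\mathcal I_{0+}(\bar w)$ and of the $\mathcal I_{+0}(\bar w)$ residual; normalization plus Lemma~\ref{lem:mpcc_mfcq_no_abnormal_multiplier}; and the biactive sign check. The differences are cosmetic. The paper carries $\nu_i^k:=\rho^k(g_i(x^k,y^k)-g_i(x^k,\theta_k^*))$ explicitly in the normalization. It also splits the $\mathcal I_{00}(\bar w)$ analysis by the sign of $\bar\eta_i$ rather than by your $k$-level branches: if $\bar\eta_i>0$ then $\lambda^*_{k,i}>0$, so $\nu_i^k\ge\varepsilon^k_{z,i}$; if $\bar\eta_i<0$ then $z_i^k>0$, so $\nu_i^k\le\varepsilon^k_{z,i}$.

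One slip: your summary of the biactive branches omits the subcase $z_i^k=0$, $g_i(x^k,y^k)\ge0$. There $\eta_i^k=\gamma_2\rho^k g_i(x^k,y^k)\ge0$ and $\nabla_y g_i(x^k,y^k)^\top\omega^k\ge\varepsilon^k_{z,i}+o(\|\omega^k\|)$, so both quantities are asymptotically nonnegative, not nonpositive. This still gives $\bar\eta_i\nabla_y g_i(\bar x,\bar y)^\top\bar\omega\ge0$, so C-stationarity is unaffected. Also, no $\mathcal I_{00}$ sign information is needed in the normalized regime.
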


\begin{proof}
	Let $\{(x^k, y^k, z^k)\} \subset \mathbb{R}^{n+m+q}$, $\{ \varepsilon^k \} \subset \mathbb{R}^{n+m+q}$, and $\{\rho^k\} \subset \mathbb{R}_{++}$ satisfy \eqref{approximate_kkt}.
	As shown in Lemma~\ref{lem:approximate_kkt_normal_cone_free}, the sequence $\{z^k\}$ is bounded, and conditions \eqref{eq:old-gap-gradient-combined} and \eqref{eq:approximate_kkt_opened_z} hold with $\{\mu^k\}$ satisfying $\mu_i^k G_i(x^k,y^k) = 0$ for $ i=1,\ldots,p$.
	Let $\bar{z} \in \mathbb{R}_+^q$ be any accumulation point of the sequence $\{z^k\}$. Then $\bar{z}_ig_i(\bar{x}, \bar{y}) =0$ for $i = 1, \ldots , q$, and $\bar{w} := (\bar{x}, \bar{y}, \bar{z})$ is a feasible point for both $\mathrm{(GP)}$ in \eqref{reformualtion_GP} and the MPCC reformulation in \eqref{eq:kkt-reform}.
	In the remainder of the proof, by passing to a subsequence if necessary, we assume without loss of generality that $(x^k, y^k, z^k) \to \bar{w} := (\bar{x}, \bar{y}, \bar{z})$. Recall that $\omega^k := -\rho^k(\theta_k^* - y^k)$, $\eta^k := \rho^k(\lambda_k^* - z^k)$.
	
	Because $\lim_{k\to\infty}\mathcal{G}_\gamma(x^k,y^k,z^k) = 0$, it follows from Lemma~\ref{lem:vanishing_gap_residuals} that $\theta_k^* - y^k \to 0$ and $\lambda_k^* - z^k \to 0$, where we recall that 
	$
	\theta_k^*:=\theta^*(x^k,y^k,z^k)$, $
	\lambda_k^*:=\lambda^*(x^k,y^k,z^k) = \operatorname{Proj}_{\mathbb R_+^q}\left(z^k+\gamma_2g(x^k,y^k)\right).
	$
	
	\textbf{Case 1: $\{\rho^k\}$ is bounded.}
	
	Suppose that the sequence $\{\rho^k\}$ is bounded. Passing to a subsequence if necessary, we may assume that $\rho^k\to\bar{\rho}\ge0$. 
	
	We first show that the sequence $\{\mu^k\} \subset \mathbb{R}_+^p$ in condition \eqref{eq:old-gap-gradient-combined} is bounded. 
	Suppose, for the sake of contradiction, that $\{\mu^k\}$ is unbounded, meaning $\|\mu^k\|\to+\infty$. Passing to a subsequence, we may assume that
	\[
	\frac{\mu^k}{\|\mu^k\|}\to \hat{\mu}\neq0.
	\]
	For any \(i\notin I_G(\bar{x},\bar{y})\), we have \(G_i(\bar{x},\bar{y})<0\). Hence, by the continuity of \(G_i\), \(G_i(x^k,y^k)<0\) for all sufficiently large \(k\). The complementarity relation \(\mu_i^kG_i(x^k,y^k)=0\) then yields \(\mu_i^k=0\) eventually, and therefore \(\hat{\mu}_i=0\).
	
	Next, we consider the limits of the vectors $\omega^k$, $\eta^k$, and $\rho^kr^k$ in \eqref{eq:old-gap-gradient-combined}. Since \(\{\rho^k\}\) is bounded, while \(\theta_k^*-y^k\to0\) and \(\lambda_k^*-z^k\to0\), it follows that $
	\omega^k:=-\rho^k(\theta_k^*-y^k)\to0$, $
	\eta^k:=\rho^k(\lambda_k^*-z^k)\to0$.
	Moreover, \(\|r^k\|/\|\theta_k^*-y^k\|\to0\) or \(r^k=0\) imply \(r^k\to0\), and the boundedness of \(\{\rho^k\}\) gives \(\rho^kr^k\to0\). Dividing \eqref{eq:old-gap-gradient-combined} by \(\|\mu^k\|\), using \(\|\mu^k\|\to+\infty\), and passing to the limit, we obtain by the continuity of the derivatives
	\[
	0= \sum_{i\in I_G(\bar{x},\bar{y})} \hat{\mu}_i\nabla G_i(\bar{x},\bar{y}),
	\]
	where \(\hat{\mu}_i\ge0\) for \(i\in I_G(\bar{x},\bar{y})\) and \(\hat{\mu}\neq0\). This contradicts MFCQ at \((\bar{x},\bar{y})\) for the upper-level inequality system \(\{(x,y)\mid G(x,y)\le0\}\). Therefore, \(\{\mu^k\}\) is bounded.
	
	Since \(\{\mu^k\}\) is bounded, we may pass to a further subsequence, if necessary, and assume that \(\mu^k\to\bar{\mu}\in\mathbb R_+^p\). Passing the complementarity relation \(\mu_i^kG_i(x^k,y^k)=0\) to the limit yields
	\[
	\bar{\mu}_iG_i(\bar{x},\bar{y})=0,\qquad i=1,\ldots,p.
	\]
	Using again \(\omega^k\to0\), \(\eta^k\to0\), and \(\rho^kr^k\to0\), and then passing to the limit in \eqref{eq:old-gap-gradient-combined}, we obtain
	\[
	0= \nabla F(\bar{x},\bar{y}) + \nabla G(\bar{x},\bar{y})^\top\bar{\mu}.
	\]
	Consequently, the MPCC stationarity condition \eqref{eq:mpcc-stationarity} holds with $\bar{\omega}:=0$, $\bar{\eta}:=0$, and $\nabla_y g_i(\bar x,\bar y)^\top\bar{\omega}:=0$ for any $i=1,\cdots,q$. For any biactive index $i\in\mathcal I_{00}(\bar{w})$, we have $\bar{\eta}_i=0\ge0$ and $\nabla_y g_i(\bar x,\bar y)^\top\bar{\omega}=0\ge0$. Therefore, $\bar{w}$ is S-stationary for \eqref{eq:kkt-reform}. 
	
	\textbf{Case 2: $\{\rho^k\}$ is unbounded.}
	
	Next, we consider the case where $\rho^k \to \infty$. We first establish the following result regarding the multipliers.
	
	\textit{Claim 1: For all sufficiently large $k$, $\mu_i^k=0$ for $i \notin I_G(\bar{x},\bar{y})$ and $z_i^k=\eta_i^k=0$ for $i \notin I_g(\bar{x},\bar{y})$. Moreover, $\nu_i^k:=\rho^k\bigl(g_i(x^k,y^k)-g_i(x^k,\theta_k^*)\bigr) \to 0$ for $i \in \mathcal I_{+0}(\bar{w})$ as $k \to \infty$.}
	
	If $i\notin I_G(\bar{x},\bar{y})$, then $G_i(\bar{x},\bar{y})<0$. By the continuity of $G_i$ and the convergence $(x^k,y^k)\to(\bar{x},\bar{y})$, we have $G_i(x^k,y^k)<0$ for all sufficiently large $k$. Hence, the complementarity condition $\mu_i^kG_i(x^k,y^k)=0$ implies $\mu_i^k=0$ for all sufficiently large $k$.
	
	If $i\notin I_g(\bar{x},\bar{y})$, then $g_i(\bar{x},\bar{y})<0$. Since $\bar{z}_ig_i(\bar{x}, \bar{y}) =0$ for $i = 1, \ldots , q$, we must have $\bar{z}_i = 0$ for $i\notin I_g(\bar{x},\bar{y})$. Because $(x^k,y^k)\to(\bar{x},\bar{y})$ and $(x^k,\theta_k^*)\to(\bar{x},\bar{y})$, there exists $\delta_g>0$ such that for all $i\notin I_g(\bar{x},\bar{y})$ and all sufficiently large $k$, 
	\[
	g_i(x^k,y^k)\le-\delta_g, \quad \text{and} \quad g_i(x^k,\theta_k^*)\le-\delta_g.
	\]
	Fixing an index $i\notin I_g(\bar{x},\bar{y})$, since $z_i^k\to \bar{z}_i = 0$, we have for all sufficiently large $k$:
	\begin{equation}\label{eq:z+g-bound}
		z_i^k+\gamma_2 g_i(x^k,y^k) \le -\frac{\gamma_2\delta_g}{2}<0, \quad \text{and} \quad z_i^k+\gamma_2 g_i(x^k,\theta_k^*) \le -\frac{\gamma_2\delta_g}{2}<0,
	\end{equation}
	and therefore, $\lambda_{k,i}^*=\operatorname{Proj}_{\mathbb R_+}\bigl(z_i^k+\gamma_2 g_i(x^k,y^k)\bigr)=0$ for all sufficiently large $k$. 
	We next show that $z_i^k=0$ for all sufficiently large $k$. If, to the contrary, $z_i^{k}>0$ for some large $k$, the first line of \eqref{eq:approximate_kkt_opened_z} gives
	\[
	\frac{z_i^k}{\gamma_2} + g_i(x^k,\theta_k^*) = -\frac{\varepsilon_{z,i}^k}{\rho^k} \to 0,
	\]
	which contradicts \eqref{eq:z+g-bound}. Thus, $z_i^k=0$ and $\eta_i^k = \rho^k(\lambda_{k,i}^* - z_i^k)=0$ for all $i\notin I_g(\bar{x},\bar{y})$ and all sufficiently large $k$.
	
	Finally, if $i\in\mathcal I_{+0}(\bar{w})$, then $\bar{z}_i>0$ and $g_i(\bar{x},\bar{y})=0$. Hence, fixing any $i\in\mathcal I_{+0}(\bar{w})$, we observe $z_i^k>0$ for all sufficiently large $k$ and $g_i(x^k,y^k)\to0$. Then,
	\[
	\lambda_{k,i}^*= \operatorname{Proj}_{\mathbb R_+}\bigl(z_i^k+\gamma_2 g_i(x^k,y^k)\bigr) = z_i^k+\gamma_2g_i(x^k,y^k).
	\]
	Combining this with the first line of \eqref{eq:approximate_kkt_opened_z} gives
	\[
	\nu_i^k = \rho^k\bigl(g_i(x^k,y^k)-g_i(x^k,\theta_k^*)\bigr) = \frac{\rho^k}{\gamma_2} \left(\lambda_{k,i}^* - z_i^k\right) - \rho^k g_i(x^k,\theta_k^*) = \varepsilon_{z,i}^k.
	\]
	Thus, $\nu_i^k=\varepsilon_{z,i}^k\to0$ for every $i\in\mathcal I_{+0}(\bar{w})$.
	
	\textit{Claim 2: The multiplier sequences $\{\mu^k\}, \{\omega^k\}, \{\eta^k\}, \{\nu^k\}$ are bounded.}
	
	We now establish the boundedness of $\{\mu^k\}, \{\omega^k\}, \{\eta^k\},$ and $\{\nu^k\}$. We demonstrated above that $\nu_i^k \to 0$ for $i \in \mathcal I_{+0}(\bar{w})$ as $k \to \infty$, ensuring that $(\nu_i^k)_{i \in \mathcal I_{+0}(\bar{w})}$ is bounded. For the remaining components, suppose, to the contrary, that
	\[
	\hat{M}_k := \max\left\{ \|\mu^k\|, \|\omega^k\|, \|\eta^k\|, \|(\nu_i^k)_{i\in I_0(\bar{z})}\| \right\} \to+\infty,
	\]
	where $I_0(\bar{z}):=\{i\mid \bar{z}_i=0\}=\mathcal I_{0+}(\bar{w})\cup\mathcal I_{00}(\bar{w})$. Passing to a subsequence if necessary, we assume that
	\[
	\frac{(\mu^k,\omega^k,\eta^k,\nu^k)}{\hat{M}_k} \to (\hat{\mu},\hat{\omega},\hat{\eta},\hat{\nu})\neq0.
	\]
	From Claim 1, we have $\hat{\mu}_i=0$ for $i\notin I_G(\bar{x},\bar{y})$, $\hat{\eta}_i=0$ for $i\notin I_g(\bar{x},\bar{y})$, and $\hat{\nu}_i=0$ for $i\in\mathcal I_{+0}(\bar{w})$.
	Moreover, since \(\mu^k\ge0\), we have \(\hat{\mu}_i\ge0\) for all \(i\in I_G(\bar{x},\bar{y})\).
	
	Next, because $\omega^k := -\rho^k(\theta_k^* - y^k) $ and $\|r^k\| / \|\theta_k^* - y^k\| \to 0$ or \(r^k=0\), we have 	
	\[
	\frac{\rho^k\|r^k\|}{\hat{M}_k} = \frac{\|r^k\|}{\|\theta_k^*-y^k\|} \cdot \frac{\|\omega^k\|}{\hat{M}_k} \to0.
	\]
	Dividing \eqref{eq:old-gap-gradient-combined} by $\hat{M}_k$, passing to the limit as $k \to \infty$, and using the continuity of the derivatives along with $\varepsilon_{x,y}^k \to 0$, we obtain
	\begin{equation}\label{limit_xy_stationarity}
		0= \nabla_y\nabla_{x,y}\mathcal{L}(\bar{x},\bar{y},\bar{z})\hat{\omega} + \nabla g(\bar{x},\bar{y})^\top\hat{\eta} + \nabla G(\bar{x},\bar{y})^\top\hat{\mu}.
	\end{equation}
	Applying Taylor's theorem at $y^k$ to the vector mapping $y \mapsto g(x^k,y)$ yields a residual vector $r_{g}^k \in \mathbb{R}^{q}$ satisfying $\|r_{g}^k\| / \|\theta_k^* - y^k\| \to 0$ as $k \to \infty$, where the $i$-th component satisfies
	\[
	g_i(x^k,y^k)-g_i(x^k,\theta_k^*) = -\nabla_y g_i(x^k,y^k)^\top(\theta_k^*-y^k) + r_{g,i}^k, \quad i = 1, \ldots ,q.
	\]
	Multiplying by $\rho^k$ and using $\omega^k=-\rho^k(\theta_k^*-y^k)$ and $\nu_i^k:=\rho^k\bigl(g_i(x^k,y^k)-g_i(x^k,\theta_k^*)\bigr)$, we obtain
	\begin{equation}\label{g}
		\nu_i^k = \nabla_y g_i(x^k,y^k)^\top\omega^k + \rho^kr_{g,i}^k, \quad i = 1, \ldots ,q.
	\end{equation}
	Moreover, since 
	\[
	\frac{|\rho^kr_{g,i}^k|}{\hat{M}_k} = \frac{|r_{g,i}^k|}{\|\theta_k^*-y^k\|} \cdot \frac{\|\omega^k\|}{\hat{M}_k} \to0,
	\]
	dividing \eqref{g} by $\hat{M}_k$ and noting that $\nu_i^k \to 0$ for $i \in \mathcal I_{+0}(\bar{w})$, we obtain
	\begin{equation} \label{limit_Izero}
		0=\nabla_y g_i(\bar{x},\bar{y})^\top\hat{\omega}, \quad i\in\mathcal I_{+0}(\bar{w}), \quad \text{and} \quad
		0=\nabla_y g_i(\bar{x},\bar{y})^\top\hat{\omega}-\hat{\nu}_i, \quad i\in I_0(\bar{z}).
	\end{equation}
	Equations \eqref{limit_xy_stationarity} and \eqref{limit_Izero} exactly match systems \eqref{eq:mpcc_mfcq_xy_component_system} and \eqref{eq:mpcc_mfcq_y_gradient_component_system} in Lemma~\ref{lem:mpcc_mfcq_no_abnormal_multiplier}. Combined with the support relations from Claim~1 and the assumption that MPCC-MFCQ holds at \(\bar{w}\), Lemma~\ref{lem:mpcc_mfcq_no_abnormal_multiplier} excludes any nonzero multiplier tuple satisfying these component relations. This contradicts the normalization \((\hat{\mu},\hat{\omega},\hat{\eta},\hat{\nu})\neq0\). Therefore, $\hat{M}_k$ is bounded, and consequently, the sequences $\{\mu^k\},\ \{\omega^k\},\ \{\eta^k\}, \text{ and } \{\nu^k\}$ are bounded.
	
	We now prove that $\bar{w}$ is C-stationary for \eqref{eq:kkt-reform}. 
	
	Because the sequences $\{\mu^k\}, \{\omega^k\}, \{\eta^k\},$ and $\{\nu^k\}$ are bounded, we may pass to a further subsequence, if necessary, and assume that 
	\[
	(\mu^k,\omega^k,\eta^k,\nu^k) \to (\bar{\mu},\bar{\omega},\bar{\eta},\bar{\nu}).
	\]
	It follows from Claim 1 that $\bar{\mu}_i=0$ for $i\notin I_G(\bar{x},\bar{y})$, $\bar{\eta}_i=0$ for $i\notin I_g(\bar{x},\bar{y})$, and $\bar{\nu}_i=0$ for $i\in\mathcal I_{+0}(\bar{w})$.
	Since $\{\omega^k\}$ is bounded and $\|r^k\| / \|\theta_k^* - y^k\| \to 0$ or \(r^k=0\), we have 	
	\[
	{\rho^k\|r^k\|} = \frac{\|r^k\|}{\|\theta_k^*-y^k\|} \cdot {\|\omega^k\|} \to0.
	\] 
	Similarly, $\rho^kr_{g,i}^k\to0$.
	
	Taking $k \to \infty$ in \eqref{eq:old-gap-gradient-combined} and applying the continuity of the derivatives, we obtain
	\begin{equation}\label{eq:C_limit_stationarity_relation}
		0= \nabla F(\bar{x},\bar{y}) + \nabla_y\nabla_{x,y}\mathcal{L}(\bar{x},\bar{y},\bar{z})\bar{\omega} + \nabla g(\bar{x},\bar{y})^\top\bar{\eta} + \nabla G(\bar{x},\bar{y})^\top\bar{\mu},
	\end{equation}
	where
	\begin{equation}\label{eq:C_limit_support_relations}
		\bar{\mu}_i=0 \quad \text{for } i\notin I_G(\bar{x},\bar{y}), \qquad \bar{\eta}_i=0 \quad \text{for } i\notin I_g(\bar{x},\bar{y}).
	\end{equation}
	Next, taking $k \to \infty$ in \eqref{g} and applying the continuity of the derivatives yields
	\[
	\bar{\nu}_i = \nabla_y g_i(\bar{x},\bar{y})^\top\bar{\omega}, \quad i = 1, \ldots ,q.
	\]
	For \(i\in\mathcal I_{+0}(\bar{w})\), we know $\bar{\nu}_i=0$, thus
	\begin{equation}\label{eq:C_limit_I0plus_orthogonality}
		\nabla_y g_i(\bar{x},\bar{y})^\top\bar{\omega}=0, \quad i\in\mathcal I_{+0}(\bar{w}).
	\end{equation}
	Consider $i\in\mathcal I_{00}(\bar{w})$. If $\bar{\eta}_i>0$, then for all sufficiently large $k$, we have $\eta_i^k = \rho^k(\lambda_{k,i}^*-z_i^k)>0$, which implies $\lambda_{k,i}^* > 0$. The definition of $\lambda_{k,i}^*$ then ensures that $\lambda_{k,i}^*= z_i^k + \gamma_2 g_i(x^k,y^k)$.
	Therefore, from \eqref{eq:approximate_kkt_opened_z}, we have
	\[
	\nu_i^k = \rho^k\bigl(g_i(x^k,y^k)-g_i(x^k,\theta_k^*)\bigr) = \frac{\rho^k}{\gamma_2}(\lambda_{k,i}^*- z_i^k) - \rho^kg_i(x^k,\theta_k^*) \ge \varepsilon_{z,i}^k.
	\]
	Passing to the limit as $k \to \infty$ in the above inequality and noting that $\varepsilon_{z,i}^k \to 0$ yields $\bar{\nu}_i\ge0$, and hence $\bar{\eta}_i\bar{\nu}_i\ge0$. 
	
	If $\bar{\eta}_i<0$, then for all sufficiently large $k$, we have $\eta_i^k = \rho^k(\lambda_{k,i}^*-z_i^k)<0$. Since $\lambda_{k,i}^*\ge0$, this implies $z_i^k>0$. The definition of $\lambda_{k,i}^*$ dictates that $\lambda_{k,i}^* = \operatorname{Proj}_{\mathbb R_+} \left(z_i^k+\gamma_2g_i(x^k,y^k)\right) \ge z_i^k+\gamma_2g_i(x^k,y^k)$. Using the first line of \eqref{eq:approximate_kkt_opened_z}, we obtain
	\[
	\nu_i^k = \rho^k\bigl(g_i(x^k,y^k)-g_i(x^k,\theta_k^*)\bigr) \le \frac{\rho^k}{\gamma_2}(\lambda_{k,i}^*- z_i^k) - \rho^kg_i(x^k,\theta_k^*) = \varepsilon_{z,i}^k.
	\]
	Passing to the limit as $k \to \infty$ in the inequality and noting that $\varepsilon_{z,i}^k \to 0$ yields $\bar{\nu}_i\le0$, and hence $\bar{\eta}_i\bar{\nu}_i\ge0$. 
	
	If $\bar{\eta}_i=0$, then clearly $\bar{\eta}_i\bar{\nu}_i = 0$. Hence, we conclude that
	\begin{equation}\label{eq:C_limit_biactive_sign}
		\bar{\eta}_i\bar{\nu}_i\ge0, \quad \forall i\in\mathcal I_{00}(\bar{w}).
	\end{equation}
	Combining \eqref{eq:C_limit_stationarity_relation}, \eqref{eq:C_limit_support_relations}, \eqref{eq:C_limit_I0plus_orthogonality}, and \eqref{eq:C_limit_biactive_sign}, we conclude that $\bar{w}$ is C-stationary for \eqref{eq:kkt-reform}. 
	This completes the proof.
\end{proof}

\begin{remark}
	Theorem~\ref{thm:akkt_to_stationarity} implies that under the lower-level and upper-level MFCQ assumptions, together with MPCC-MFCQ at the limiting MPCC point, the case \(\rho^k\to+\infty\) yields C-stationarity for the MPCC reformulation \eqref{eq:kkt-reform}. Thus, even though the regularized gap-function formulation avoids writing the full MPCC system explicitly, its approximate KKT limits still recover C-stationarity for the MPCC reformulation of the original bilevel problem. 
\end{remark}

Now, a natural question raised by the preceding analysis is whether, as \(\rho^k\to+\infty\), an accumulation point \(\bar{w}\) necessarily satisfies a stronger stationarity condition, such as M-stationarity, for the MPCC reformulation \eqref{eq:kkt-reform}. The difference between C- and M-stationarity lies only in the sign condition on $\mathcal I_{00}(\bar{w})$. The following example demonstrates that this is not generally the case.

\begin{example}
	Consider the following bilevel optimization problem:
	\[
	\begin{aligned}
		\min_{x\in\mathbb{R}, y\in\mathbb{R}} \  & F(x, y)=x-2y+\frac12x^2+\frac12y^2 \\
		\text{s.t.}\ \quad & y \in \arg\min_{y'} \left\{ f(x, y')=\frac12y'^2-xy' \;\middle|\; g(x, y')=-y' \le 0 \right\}.
	\end{aligned}
	\]
	Set \(\gamma_1=\gamma_2=1\) in the regularized gap-function reformulation \eqref{reformualtion_GP}, and consider the penalty subproblem:
	\[
	\min_{x\in\mathbb{R},y\in\mathbb{R},z\in\mathbb{R}_+} \quad \psi_\rho(x,y,z) := F(x,y)+\rho \mathcal{G}_1(x,y,z).
	\]
	For any \((x,y,z)\), the auxiliary points defined in \eqref{eq:deftheta} are given by $\theta^*(x,y,z)=(x+y+z)/2$, $\lambda^*(x,y,z)=\operatorname{Proj}_{\mathbb R_+}(z+g(x,y))=[z-y]_+$. Consequently,
	\[
	\begin{aligned}
		\mathcal G_1(x,y,z)
		=\frac14(x+y+z)^2-xy-\frac12z^2+\frac12[z-y]_+^2.
	\end{aligned}
	\]
	For every \(\rho>0\), the point
	\[
	w_\rho := (x_\rho,y_\rho,z_\rho) :=
	\left(-\frac{1}{\rho+1},\frac{2}{\rho+1},\frac{1}{\rho+1}\right)
	\]
	is a stationary point of the penalty subproblem. Indeed, since \(z_\rho-y_\rho<0\), we have $\theta^*_\rho = \theta^*(x_\rho,y_\rho,z_\rho)=z_\rho$, $\lambda^*_\rho = \lambda^*(x_\rho,y_\rho,z_\rho)=0$, while the gradient formula \eqref{Ggradient} yields \(\nabla\psi_\rho(w_\rho)=0\). 
	
	We have \(w_\rho\to\bar{w}:=(0,0,0)\) as \(\rho\to+\infty\). The multiplier sequences are given by
	\[
	\eta_\rho := \rho\left(\lambda^*_\rho-z_\rho\right)
	=-\frac{\rho}{\rho+1}\to -1, \qquad 
	\nu_\rho := \rho\left(g(x_\rho,y_\rho)-g(x_\rho,\theta^*_\rho)\right)
	=-\frac{\rho}{\rho+1}\to -1.
	\]
	At the limit point \(\bar{w}\), the MPCC stationarity system from Definition~\ref{def:stationarity} uniquely determines the multipliers \(\bar{\omega}=1\), \(\bar{\eta}=-1\), and \(\bar{\nu}=-1\). Because \(\bar{\eta}\bar{\nu}=1\ge0\), the limit point \(\bar{w}\) satisfies the C-stationarity sign condition. However, the pair \((\bar{\eta},\bar{\nu})=(-1,-1)\) satisfies neither \(\bar{\eta}>0, \bar{\nu}>0\) nor \(\bar{\eta}\bar{\nu}=0\). Consequently, the M-stationarity condition in Definition~\ref{def:stationarity} fails.
\end{example}

Motivated by this limitation, we introduce a two-parameter slack reformulation. In the subsequent section, we establish that the approximate KKT condition associated with this new reformulation implies M-stationarity.

\section{M-Stationarity Enhancement}\label{sec4}

In the preceding section, we established that accumulation points generated by stationary points of the standard penalty approximation \eqref{eq:penalized} for the regularized gap function-based reformulation (GP) in \eqref{reformualtion_GP} are guaranteed only to be C-stationary points of the MPCC reformulation \eqref{eq:kkt-reform}. This limitation primarily arises from insufficiently strict control over the multiplier variable $z$.

To overcome this limitation, we introduce an equivalent reformulation of (GP) in \eqref{reformualtion_GP}. By introducing a slack variable \(s\in\mathbb R^q_+\), we convert the inequality constraint \(g(x,y)\le0\) into the equation $g(x,y)+s=0$. We then explicitly impose the complementarity relation between this slack variable and the multiplier \(z\) by requiring \((z,s)\in\mathcal C\), where \(\mathcal C\) is the complementarity set defined in \eqref{def_C}. This yields the following slack reformulation:
\begin{equation}\label{reformulation_slack}
	\text{(GP)}_{\mathcal C} \quad	\begin{aligned}
		\min_{\substack{(x,y)\in\mathbb R^{n+m}, (z,s)\in\mathbb R^{q+q}}} &F(x,y)\\
		\mathrm{s.t.}\ \quad \quad \quad &(x,y)\in\Omega,\qquad
		\mathcal G_\gamma(x,y,z)\le0,\\
		&g(x,y)+s=0,\qquad
		(z,s)\in\mathcal C.
	\end{aligned}
\end{equation}

For this formulation, we consider the two-parameter penalty subproblem
\begin{equation}\label{eq:penalized-slack}
	\min_{(x,y) \in \Omega, (z,s) \in \mathcal{C} } \quad F(x, y) + \rho_1 \mathcal G_\gamma(x,y,z) + \rho_2 \|g(x,y)+s\|^2,
\end{equation}
where \(\rho_1\) and $\rho_2$ are positive penalty parameters. Observe that in this subproblem, the complementarity condition $(z,s) \in \mathcal{C} $ is retained as a hard constraint rather than being penalized. This exact complementarity condition is used below to recover M-stationarity for the MPCC reformulation \eqref{eq:kkt-reform}.

To analyze the stationarity properties of the accumulation points generated by \eqref{eq:penalized-slack} as the penalty parameters $\rho_1, \rho_2 \to \infty$, we introduce the following approximate KKT condition for $\text{(GP)}_{\mathcal C}$.

\begin{definition}\label{def:two_parameter_approximate_kkt}
	We say that a point $(\bar{x}, \bar{y})$ satisfies the approximate KKT condition for $\text{(GP)}_{\mathcal C}$ in \eqref{reformulation_slack} if there exist sequences $\{(x^k, y^k, z^k,s^k)\} \subset \mathbb{R}^{n+m+q+q}$, $\{ \varepsilon^k \} \subset \mathbb{R}^{n+m+q+q}$, and $\{(\rho_1^k, \rho_2^k)\} \subset \mathbb{R}_{++}^2$ such that
	\begin{equation}\label{approximate_kkt-s}
		\begin{aligned}
			&\lim_{k\to\infty} (x^k, y^k) = (\bar{x}, \bar{y}), \quad \lim_{k\to\infty} \varepsilon^k = 0, \quad (x^k,y^k) \in \Omega, \quad (z^k, s^k) \in \mathcal{C}, \\
			&\lim_{k\to\infty} \mathcal{G}_\gamma(x^k,y^k,z^k) = 0, \quad \lim_{k\to\infty} \bigl( g(x^k, y^k) + s^k \bigr) = 0, \\
			&\varepsilon^k \in \nabla F(x^k,y^k) + \rho_1^k\nabla \mathcal{G}_\gamma(x^k,y^k,z^k) + \rho_2^k\nabla \left( \|g(x^k, y^k) + s^k\|^2\right) + \mathcal{N}_{\Omega }(x^k,y^k) \times \mathcal N_{\mathcal C}(z^k,s^k),
		\end{aligned}
	\end{equation}
	where, for notational simplicity, the gradients \(\nabla F(x^k,y^k)\), \(\nabla\mathcal G_\gamma(x^k,y^k,z^k)\), and $\nabla \left( \|g(x^k, y^k) + s^k\|^2\right)$ are embedded into the \((x,y,z,s)\)-space by assigning zero entries to the components of absent variables.
\end{definition}

We use the shorthand \(u = (x,y,z,s)\). As in the preceding section, before presenting the approximate KKT condition, we recall the index sets used below. For \(z,s\in\mathbb R_+^q\), define
\[
I_+(z):=\{i\mid z_i>0\},\quad
I_0(z):=\{i\mid z_i=0\},\quad
I_+(s):=\{i\mid s_i>0\},\quad
I_0(s):=\{i\mid s_i=0\}.
\]
For a limit point \(\bar u=(\bar x,\bar y,\bar z,\bar s)\), we use the same notation as in Section \ref{sec3}: \(I_G(\bar x,\bar y)\), \(I_g(\bar x,\bar y)\), and \(I_0(\bar z)\) denote the active index sets of the constraints \(G(x,y)\le0\), \(g(x,y)\le0\), and \(z\ge0\), respectively. If \(\bar u\) is feasible for the slack reformulation, then \(g(\bar x,\bar y)+\bar s=0\). Hence $I_g(\bar x,\bar y)=I_0(\bar s)$. Moreover, for \(\bar w=(\bar x,\bar y,\bar z)\), the MPCC complementarity index sets satisfy
\[
I_g(\bar x,\bar y)=\mathcal I_{+0}(\bar w)\cup\mathcal I_{00}(\bar w),
\quad
I_0(\bar z)=\mathcal I_{0+}(\bar w)\cup\mathcal I_{00}(\bar w).
\]

As in Lemma~\ref{lem:approximate_kkt_normal_cone_free}, the next lemma relates the approximate KKT condition for \(\mathrm{(GP)}_{\mathcal C}\) in \eqref{reformulation_slack} to the stationarity conditions of the MPCC reformulation \eqref{eq:kkt-reform}.

\begin{lemma}\label{lem:approximate_kkt_slack_normal_cone_free}
	Suppose that $(\bar{x}, \bar{y})$ satisfies the approximate KKT condition for $\mathrm{(GP)}_{\mathcal C}$. Let\linebreak[4] $\{(x^k, y^k, z^k, s^k)\} \subset \mathbb{R}^{n+m+q+q}$, $\{ \varepsilon^k \} \subset \mathbb{R}^{n+m+q+q}$, and $\{(\rho_1^k, \rho_2^k)\} \subset \mathbb{R}_{++}^2$ satisfy \eqref{approximate_kkt-s}. If the MFCQ holds at $\bar{y}$ for the lower-level constraint system $\{ y \mid g(\bar{x}, y) \le 0 \}$, then $\{z^k\}$ and $\{s^k\}$ are bounded.
	
	Furthermore, if $(\bar{z}, \bar{s}) \in \mathbb{R}_+^q \times \mathbb{R}_+^q$ is any accumulation point of the sequence $\{(z^k, s^k)\}$, then $\bar{u} := (\bar{x}, \bar{y}, \bar{z}, \bar{s})$ is a feasible point for $\mathrm{(GP)}_{\mathcal C}$ in \eqref{reformulation_slack}, and $\bar{w} := (\bar{x}, \bar{y}, \bar{z})$ is a feasible point for the MPCC reformulation in \eqref{eq:kkt-reform}.
	
	Assume, in addition, that the MFCQ holds at $(\bar{x}, \bar{y})$ for the upper-level inequality system $\{ (x,y) \mid G(x,y) \le 0 \}$. By partitioning the error sequence $\varepsilon^k = (\varepsilon_{x,y}^k, \varepsilon_{z}^k, \varepsilon_s^k)$ with $\varepsilon_{x,y}^k \in \mathbb{R}^{n+m}$ and $\varepsilon_z^k, \varepsilon_s^k \in \mathbb{R}^q$, there exists a multiplier sequence $\{\mu^k\} \subset \mathbb{R}_+^p$ satisfying $\mu_i^k G_i(x^k, y^k) = 0$ for $i = 1, \ldots, p$, such that
	\begin{equation}\label{eq:slack-gap-gradient-combined}
		\varepsilon_{x,y}^k + \rho_1^k r^k = \nabla F(x^k, y^k) + \nabla G(x^k, y^k)^\top \mu^k + \nabla_y\nabla_{x,y}\mathcal{L}(x^k, y^k, z^k)\omega^k + \nabla g(x^k, y^k)^\top \eta^k,
	\end{equation}
	where 
	\[
	\omega^k := -\rho_1^k(\theta_k^* - y^k), \quad
	\eta^k := \rho_1^k(\lambda_k^* - z^k) + \zeta^k, \quad
	\zeta^k := 2\rho_2^k\bigl(g(x^k, y^k) + s^k\bigr),
	\]
	with $\theta_k^* := \theta^*(x^k, y^k, z^k)$ and $\lambda_k^* := \lambda^*(x^k, y^k, z^k)$. Here, $r^k \in \mathbb{R}^{n+m}$ is a residual vector satisfying \(r^k=0\) whenever \(\theta_k^*=y^k\), and \(\|r^k\|/|\theta_k^*-y^k\|\to0\) otherwise, as $k \to \infty$. Moreover, the following componentwise conditions hold:
	\begin{equation}\label{eq:approximate_kkt_slack_zs}
		\begin{aligned}
			&\text{for } i\in I_+(z^k):\
			g_i(x^k,\theta_k^*) =
			\dfrac{\lambda_{k,i}^* - z_i^k}{\gamma_2}
			-
			\dfrac{\varepsilon_{z,i}^k}{\rho_1^k},
			\qquad
			\text{for } i\in I_+(s^k):\
			\zeta_i^k = \varepsilon_{s,i}^k,\\[0.6em]
			&\text{for } i\in I_0(z^k)\cap I_0(s^k):\ \text{one of }
			\left\{
			\begin{aligned}
				&g_i(x^k,\theta_k^*) \le
				\dfrac{\lambda_{k,i}^* - z_i^k}{\gamma_2}
				-
				\dfrac{\varepsilon_{z,i}^k}{\rho_1^k},
				\quad
				\zeta_i^k \ge \varepsilon_{s,i}^k,
				\\[0.2em]
				&g_i(x^k,\theta_k^*) =
				\dfrac{\lambda_{k,i}^* - z_i^k}{\gamma_2}
				-
				\dfrac{\varepsilon_{z,i}^k}{\rho_1^k},
				\\[0.2em]
				&\zeta_i^k = \varepsilon_{s,i}^k
			\end{aligned}
			\right.
			\text{ holds,}
		\end{aligned}
	\end{equation}
	where $I_+(z^k) := \{i \mid z_i^k > 0\}$, $I_0(z^k) := \{i \mid z_i^k = 0\}$, $I_+(s^k) := \{i \mid s_i^k > 0\}$, and $I_0(s^k) := \{i \mid s_i^k = 0\}$.
\end{lemma}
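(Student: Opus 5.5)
The plan mirrors the proof of Lemma~\ref{lem:approximate_kkt_normal_cone_free}, with extra bookkeeping for the slack variable and the complementarity set \(\mathcal C\).

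\textbf{Boundedness of \(\{z^k\}\).} We argue by contradiction exactly as before. The first-order condition \eqref{eq:stationary-condition-theta} for \(\theta_k^*\) does not involve \(s^k\). Lemma~\ref{lem:vanishing_gap_residuals} gives \(\theta_k^*-y^k\to0\) and \(\lambda_k^*-z^k\to0\). Dividing \eqref{eq:stationary-condition-theta} by \(\|z^k\|\) yields \(\nabla_y g(\bar x,\bar y)^\top\hat z=0\) with \(\hat z\ge0\) and \(\|\hat z\|=1\). The projection argument gives \(\hat z_i=0\) for \(i\notin I_g(\bar x,\bar y)\), which contradicts lower-level MFCQ.

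\textbf{Boundedness of \(\{s^k\}\).} This follows directly from \(s^k=(g(x^k,y^k)+s^k)-g(x^k,y^k)\), since \(g(x^k,y^k)+s^k\to0\) and \(g(x^k,y^k)\to g(\bar x,\bar y)\).

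\textbf{Feasibility of the limit.} Take an accumulation point \((\bar z,\bar s)\). Closedness of \(\Omega\) and \(\mathcal C\) gives \((\bar x,\bar y)\in\Omega\) and \((\bar z,\bar s)\in\mathcal C\). Continuity gives \(g(\bar x,\bar y)+\bar s=0\) and \(\mathcal G_\gamma(\bar x,\bar y,\bar z)=0\), so \(\bar u\) is feasible for \(\mathrm{(GP)}_{\mathcal C}\). Lemma~\ref{lem:gap_nonnegative_exact} then gives \(\bar z\in\mathcal M(\bar x,\bar y)\), which is MPCC feasibility of \(\bar w\).

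\textbf{Multiplier representation in \((x,y)\).} Lemma~\ref{lem:mfcq_stability} and the normal-cone formula for \(\Omega\) give the multipliers \(\mu^k\), exactly as in \eqref{eq:approximate_kkt_opened_xy}. The only new term is
\[
\rho_2^k\nabla_{x,y}\|g(x^k,y^k)+s^k\|^2=\nabla g(x^k,y^k)^\top\zeta^k .
\]
This term is absorbed into \(\eta^k\). The gap-function part is handled by the identity \eqref{eq:grad_xy-G} and the same Taylor expansion, which produces \(r^k\) and \(\omega^k\). Here Assumption~\ref{asup_stationarity_lower}, boundedness of \(z^k\), and \(\theta_k^*\to\bar y\) make the \(C^2\) remainder estimate uniform.

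\textbf{Componentwise conditions \eqref{eq:approximate_kkt_slack_zs}.} This is the main step. The \((z,s)\)-component of \eqref{approximate_kkt-s} reads
\[
\bigl(\varepsilon_z^k-\rho_1^k\nabla_z\mathcal G_\gamma,\ \varepsilon_s^k-\zeta^k\bigr)\in\mathcal N_{\mathcal C}(z^k,s^k),
\]
using \(\nabla_s\rho_2^k\|g+s\|^2=\zeta^k\). Because \(\mathcal C\) is a product of one-dimensional complementarity sets, this inclusion decouples by index, and Lemma~\ref{lem:normal_cone_complementarity} is applied at the point \((z^k,s^k)\) itself.

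For \(i\in I_+(z^k)\), which forces \(s_i^k=0\), the \(z\)-component vanishes. For \(i\in I_+(s^k)\), the \(s\)-component vanishes, giving \(\zeta_i^k=\varepsilon_{s,i}^k\). For biactive indices, the three alternatives are \((\xi^z_i<0,\ \xi^s_i<0)\), \(\xi^z_i=0\), or \(\xi^s_i=0\). The strict-negative case can be weakened to nonstrict inequalities for the stated form.

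Substituting \(\nabla_z\mathcal G_\gamma=(\lambda_k^*-z^k)/\gamma_2-g(x^k,\theta_k^*)\) and dividing by \(\rho_1^k>0\) yields the displayed relations. The only real care needed is the sign convention when rearranging, which should match the sign bookkeeping in the proof of Lemma~\ref{lem:approximate_kkt_normal_cone_free}.
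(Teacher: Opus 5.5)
Your proposal is correct and follows the paper's proof essentially step for step: the same normalization-and-contradiction argument for $\{z^k\}$, $s^k\to -g(\bar x,\bar y)$ for $\{s^k\}$, closedness and continuity plus Lemma~\ref{lem:gap_nonnegative_exact} for feasibility, absorbing $\nabla g(x^k,y^k)^\top\zeta^k$ into $\eta^k$ alongside the same Taylor remainder $r^k$, and the componentwise formula for $\mathcal N_{\mathcal C}(z^k,s^k)$ from Lemma~\ref{lem:normal_cone_complementarity}. Your sign bookkeeping in the last step also checks out: $\xi^z_i\le 0$ rearranges to the stated inequality on $g_i(x^k,\theta_k^*)$, and $\xi^s_i\le 0$ gives $\zeta_i^k\ge\varepsilon_{s,i}^k$.
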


\begin{proof}
	The boundedness of $\{z^k\}$ follows from an argument analogous to that used in the proof of Lemma~\ref{lem:approximate_kkt_normal_cone_free}. Indeed, the present approximate KKT condition still implies that $(x^k, y^k) \to (\bar{x}, \bar{y})$, $z^k \ge 0$, and $\mathcal{G}_\gamma(x^k, y^k, z^k) \to 0$. 
	Because $\lim_{k\to\infty} \mathcal{G}_\gamma(x^k, y^k, z^k) = 0$, Lemma~\ref{lem:vanishing_gap_residuals} ensures that $\theta_k^* - y^k \to 0$ and $\lambda_k^* - z^k \to 0$, where we recall that $
	\theta_k^* := \theta^*(x^k, y^k, z^k)$, $ 
	\lambda_k^* := \lambda^*(x^k, y^k, z^k) = \operatorname{Proj}_{\mathbb{R}_+^q}\left(z^k + \gamma_2 g(x^k, y^k)\right)$.
	If $\{z^k\}$ were unbounded, normalizing by setting $\hat{z}^k = z^k / \|z^k\|$ and passing to a necessary subsequence would yield a nonzero limit vector $\hat{z} \ge 0$, supported on $I_g(\bar{x}, \bar{y})$, such that
	\[
	\sum_{i \in I_g(\bar{x}, \bar{y})} \hat{z}_i \nabla_y g_i(\bar{x}, \bar{y}) = 0.
	\]
	This contradicts the lower-level MFCQ for the system $g(\bar{x}, \cdot) \le 0$ at $\bar{y}$. Thus, $\{z^k\}$ is bounded.
	
	The first difference from Lemma~\ref{lem:approximate_kkt_normal_cone_free} is the presence of the slack variable $s^k$. Since $g(x^k, y^k) + s^k \to 0$ and $(x^k, y^k) \to (\bar{x}, \bar{y})$, the continuity of $g$ yields $s^k \to -g(\bar{x}, \bar{y})$, implying that $\{s^k\}$ is also bounded. Let $(\bar{z}, \bar{s})$ be an accumulation point of $\{(z^k, s^k)\}$. Passing to a subsequence if necessary, we may assume that $(z^k, s^k) \to (\bar{z}, \bar{s})$. The closedness of $\Omega$ and $\mathcal{C}$, combined with $g(x^k, y^k) + s^k \to 0$, establishes that
	\[
	(\bar{x}, \bar{y}) \in \Omega, \qquad
	(\bar{z}, \bar{s}) \in \mathcal{C}, \qquad
	g(\bar{x}, \bar{y}) + \bar{s} = 0,
	\]
	which implies that $\bar{z}_i g_i(\bar{x}, \bar{y}) = 0$ for $i = 1, \ldots, q$. Moreover, the continuity of $\mathcal{G}_\gamma$ established in Lemma~\ref{smooth_G} ensures that $\mathcal{G}_\gamma(\bar{x}, \bar{y}, \bar{z}) = 0$. Thus, $\bar{u} = (\bar{x}, \bar{y}, \bar{z}, \bar{s})$ is feasible for $\mathrm{(GP)}_{\mathcal C}$ in \eqref{reformulation_slack}.  This immediately dictates that $\bar{y} \in S(\bar{x})$ and $\bar{z} \in \mathcal{M}(\bar{x}, \bar{y})$. Consequently, 
	\[
	\nabla_y \mathcal{L}(\bar{x}, \bar{y}, \bar{z}) = 0, \qquad
	g(\bar{x}, \bar{y}) \le 0, \qquad
	\bar{z} \ge 0, \qquad
	\bar{z}^\top g(\bar{x}, \bar{y}) = 0.
	\]
	Hence, $\bar{w} = (\bar{x}, \bar{y}, \bar{z})$ is a feasible point for the MPCC reformulation \eqref{eq:kkt-reform}.
	
	It remains to analyze the multiplier representation. Define $\zeta^k := 2\rho_2^k\bigl(g(x^k, y^k) + s^k\bigr)$. The $(x, y)$-component of \eqref{approximate_kkt-s} contains, alongside the gap-function term, the equality-penalty contribution $\nabla g(x^k, y^k)^\top \zeta^k$. Applying the normal-cone representation of $\mathcal{N}_\Omega(x^k, y^k)$ under the upper-level MFCQ, we deduce the existence of a multiplier $\mu^k \in \mathbb{R}_+^p$ satisfying $\mu_i^k G_i(x^k, y^k) = 0$ such that
	\[
	\varepsilon_{x,y}^k
	=
	\nabla F(x^k, y^k)
	+
	\nabla G(x^k, y^k)^\top \mu^k
	+
	\rho_1^k \nabla_{x,y} \mathcal{G}_\gamma(x^k, y^k, z^k)
	+
	\nabla g(x^k, y^k)^\top \zeta^k.
	\]
	We evaluate the gradient formula for $\mathcal{G}_\gamma$ from \eqref{Ggradient} at $(x^k, y^k, z^k)$ and apply Taylor's theorem at $y^k$ to the mapping $y \mapsto \nabla_{x,y} \mathcal{L}(x^k, y, z^k)$. Proceeding analogously to the derivation in Lemma~\ref{lem:approximate_kkt_normal_cone_free}, but substituting $\rho^k$ with $\rho_1^k$, we obtain \eqref{eq:slack-gap-gradient-combined}. Here, $\omega^k := -\rho_1^k(\theta_k^* - y^k)$ and $\eta^k := \rho_1^k(\lambda_k^* - z^k) + \zeta^k$, while $r^k$ represents the Taylor remainder satisfying \(r^k=0\) whenever \(\theta_k^*=y^k\), and \(\|r^k\|/|\theta_k^*-y^k\|\to0\) otherwise, as $k \to \infty$.
	
	Another difference is that the normal cone operating on the $(z, s)$-variables is the limiting normal cone $\mathcal{N}_{\mathcal{C}}$, rather than $\mathcal{N}_{\mathbb{R}_+^q}$. From the gradient formula
	\[
	\nabla_z \mathcal{G}_\gamma(x^k, y^k, z^k)
	=
	\frac{\lambda_k^* - z^k}{\gamma_2}
	-
	g(x^k, \theta_k^*),
	\]
	the $(z, s)$-component of \eqref{approximate_kkt-s} yields
	\[
	\left(
	\varepsilon_z^k - \rho_1^k \left( \frac{\lambda_k^* - z^k}{\gamma_2} - g(x^k, \theta_k^*) \right),
	\varepsilon_s^k - \zeta^k
	\right)
	\in
	\mathcal{N}_{\mathcal{C}}(z^k, s^k).
	\]
	For each scalar complementarity set $\mathcal{C}_i = \{(z_i, s_i) \in \mathbb{R}_+^2 \mid s_i z_i = 0\}$, by Lemma \ref{lem:normal_cone_complementarity}, the limiting normal cone is explicitly given by
	\[
	\mathcal{N}_{\mathcal{C}_i}(z_i, s_i) =
	\begin{cases}
		\{0\} \times \mathbb{R}, & z_i > 0, \ s_i = 0,\\
		\mathbb{R} \times \{0\}, & z_i = 0, \ s_i > 0,\\
		(\mathbb{R}_- \times \mathbb{R}_-) \cup (\{0\} \times \mathbb{R}) \cup (\mathbb{R} \times \{0\}), & z_i = s_i = 0.
	\end{cases}
	\]
	Applying this componentwise formula to the preceding normal-cone inclusion gives \eqref{eq:approximate_kkt_slack_zs}. Indeed, if $z_i^k > 0$, then $s_i^k = 0$ and the $z_i$-component of the normal cone is zero. Hence the first component of the above inclusion vanishes, which gives the condition displayed for $i \in I_+(z^k)$ in the first line of \eqref{eq:approximate_kkt_slack_zs}. Similarly, if $s_i^k > 0$, then $z_i^k = 0$ and the $s_i$-component of the normal cone is zero. The second component of the inclusion therefore yields $\zeta_i^k = \varepsilon_{s,i}^k$, which is the condition for $i \in I_+(s^k)$ in the first line of \eqref{eq:approximate_kkt_slack_zs}. Finally, if $z_i^k = s_i^k = 0$, the three branches of $\mathcal{N}_{\mathcal{C}_i}(0, 0)$ give the second line of \eqref{eq:approximate_kkt_slack_zs}. This completes the proof.
\end{proof}

The preceding lemma plays the same role for \((\mathrm{GP})_{\mathcal C}\) as Lemma~\ref{lem:approximate_kkt_normal_cone_free} does for the single-parameter formulation, it rewrites the approximate KKT condition for \((\mathrm{GP})_{\mathcal C}\) as a multiplier system for the MPCC reformulation \eqref{eq:kkt-reform}. The slack variable (s) and the penalty parameter \(\rho_2\) provide additional control over the feasibility residual \(g(x,y)+s\), leading to a stronger limiting stationarity result.

\begin{theorem}\label{thm:two_parameter_M_stationarity}
	Suppose that $(\bar{x},\bar{y})$ satisfies the approximate KKT condition for $\mathrm{(GP)}_{\mathcal C}$. Let $\{(x^k,y^k,z^k,s^k)\}\subset\mathbb R^{n+m+q+q}$ and $\{(\rho_1^k,\rho_2^k)\}\subset\mathbb R_{++}^2$ satisfy \eqref{approximate_kkt-s}. Assume that the MFCQ holds at $\bar y$ for the lower-level constraint system $\{y\mid g(\bar x,y)\le0\}$, and that the MFCQ holds at $(\bar x,\bar y)$ for the upper-level inequality system $\{(x,y)\mid G(x,y)\le0\}$.  Let $\bar z\in\mathbb R_+^q$ be any accumulation point of the sequence $\{z^k\}$. Then the following stationarity conditions hold for $\bar w:=(\bar x,\bar y,\bar z)$:
	\begin{enumerate}
		\item If both $\{\rho_1^k\}$ and $\{\rho_2^k\}$ are bounded, then $\bar w$ is S-stationary for \eqref{eq:kkt-reform}.
		\item If $\rho_1^k\to+\infty$, $\rho_2^k\to+\infty$, and $\rho_2^k/\rho_1^k\to+\infty$, and we further assume that the MPCC-MFCQ holds at $\bar w$ for the MPCC reformulation \eqref{eq:kkt-reform}, then $\bar w$ is M-stationary for \eqref{eq:kkt-reform}.
	\end{enumerate}
\end{theorem}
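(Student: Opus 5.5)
The proof will follow the pattern of Theorem~\ref{thm:akkt_to_stationarity}, now starting from Lemma~\ref{lem:approximate_kkt_slack_normal_cone_free}. By that lemma, $\{z^k\}$ and $\{s^k\}$ are bounded. Passing to a subsequence, we assume $(z^k,s^k)\to(\bar z,\bar s)$, and $\bar w$ is MPCC-feasible. We also have the representation \eqref{eq:slack-gap-gradient-combined} with $\omega^k=-\rho_1^k(\theta_k^*-y^k)$ and $\eta^k=\rho_1^k(\lambda_k^*-z^k)+\zeta^k$, together with the componentwise relations \eqref{eq:approximate_kkt_slack_zs}. As before, set $\nu_i^k:=\rho_1^k(g_i(x^k,y^k)-g_i(x^k,\theta_k^*))$, so that the Taylor estimate gives $\nu_i^k=\nabla_y g_i(x^k,y^k)^\top\omega^k+\rho_1^k r_{g,i}^k$.

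\textbf{Part 1: bounded penalties.} This case is identical to Case 1 of Theorem~\ref{thm:akkt_to_stationarity}. Since $\omega^k\to0$, $\rho_1^k(\lambda_k^*-z^k)\to0$ and $\zeta^k=2\rho_2^k(g+s^k)\to0$, the upper-level MFCQ bounds $\{\mu^k\}$. The limit then gives S-stationarity with $\bar\omega=0$ and $\bar\eta=0$.

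\textbf{Part 2: support relations.} First I establish the analogue of Claim 1: $\mu_i^k=0$ for $i\notin I_G$, and $\eta_i^k\to0$ (or vanishes eventually) for $i\notin I_g$, and $\nu_i^k\to0$ for $i\in\mathcal I_{+0}$.

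For $i\notin I_g$, we have $s_i^k\to\bar s_i>0$, hence $z_i^k=0$ and $\zeta_i^k=\varepsilon_{s,i}^k\to0$. Moreover, $\lambda_{k,i}^*=0$ eventually, so $\eta_i^k=\zeta_i^k\to0$. This suffices for the limit.

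For $i\in\mathcal I_{+0}$, we have $z_i^k>0$. The first relation of \eqref{eq:approximate_kkt_slack_zs} combined with $\lambda_{k,i}^*=z_i^k+\gamma_2g_i$ gives $\nu_i^k=\varepsilon_{z,i}^k\to0$.

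\textbf{Part 2: boundedness.} Next I show boundedness of $(\mu^k,\omega^k,\eta^k,\nu^k)$ by normalization. The normalized limits satisfy \eqref{eq:mpcc_mfcq_xy_component_system}–\eqref{eq:mpcc_mfcq_y_gradient_component_system}, so Lemma~\ref{lem:mpcc_mfcq_no_abnormal_multiplier} yields a contradiction, exactly as in Claim 2. Passing to the limit produces the weak-stationarity system \eqref{eq:mpcc-stationarity} with $\bar\nu_i=\nabla_yg_i(\bar x,\bar y)^\top\bar\omega$.

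\textbf{Part 2: the biactive sign condition (main obstacle).} It remains to show, for $i\in\mathcal I_{00}$, that either $\bar\eta_i\bar\nu_i=0$ or both are positive. The key is the domination $\rho_2^k/\rho_1^k\to\infty$.

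I split $\eta_i^k=\rho_1^k(\lambda_{k,i}^*-z_i^k)+\zeta_i^k$ and look at the three branches of \eqref{eq:approximate_kkt_slack_zs} along a subsequence on which one branch (or one of $z_i^k>0$, $s_i^k>0$) occurs infinitely often.

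\emph{Case $s_i^k>0$ or the branch $\zeta_i^k=\varepsilon_{s,i}^k$.} Then $\zeta_i^k\to0$. Also $z_i^k=0$ (in the branch case $z_i^k=s_i^k=0$), so $g_i(x^k,y^k)=\zeta_i^k/(2\rho_2^k)-s_i^k\le \zeta_i^k/(2\rho_2^k)$. Hence $\rho_1^k\lambda_{k,i}^*=\rho_1^k[\gamma_2 g_i]_+\le \gamma_2\rho_1^k|\zeta_i^k|/(2\rho_2^k)\to0$ by domination. Therefore $\eta_i^k\to0$, giving $\bar\eta_i=0$.

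\emph{Case $z_i^k>0$ or the equality branch in $g_i(x^k,\theta_k^*)$.} Then $s_i^k=0$ (or both are zero), and the $z$-relation holds with equality. Writing $\lambda_{k,i}^*=[z_i^k+\gamma_2 g_i]_+$ with $g_i=\zeta_i^k/(2\rho_2^k)$, a short computation gives $\nu_i^k=\varepsilon_{z,i}^k+\rho_1^k\bigl(g_i-(\lambda_{k,i}^*-z_i^k)/\gamma_2\bigr)$. The term $g_i-(\lambda_{k,i}^*-z_i^k)/\gamma_2$ vanishes when the projection is inactive. When it is active, it equals $g_i+z_i^k/\gamma_2$ with $z_i^k+\gamma_2g_i\le 0$. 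Since $\rho_1^k|g_i|\le\rho_1^k|\zeta_i^k|/(2\rho_2^k)\to0$ once $\zeta^k$ is bounded, I expect to show $\rho_1^kz_i^k$ is controlled and hence $\bar\nu_i=0$. This needs care: I would first try to show that $\zeta^k$ is bounded (it is part of $\eta^k$ minus a term bounded via $\lambda^*$ arguments), then conclude $\nu_i^k\to0$.

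\emph{Remaining branch with both inequalities.} Here $z_i^k=s_i^k=0$ and $\zeta_i^k\ge\varepsilon_{s,i}^k$, so $g_i\ge o(1)/\rho_2^k$. Then $\lambda_{k,i}^*=\gamma_2[g_i]_+$ and $\eta_i^k\ge -o(1)$, giving $\bar\eta_i\ge0$. The inequality in $g(x^k,\theta_k^*)$ gives $\nu_i^k\ge\varepsilon_{z,i}^k+\rho_1^k(g_i-[g_i]_+)\ge \varepsilon_{z,i}^k-o(1)\rho_1^k/\rho_2^k$, hence $\bar\nu_i\ge0$.

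Combining the three cases yields M-stationarity. The delicate step is the second case, where the domination must be used to kill $\rho_1^k$-scaled feasibility residuals. It also requires boundedness of $\zeta^k$, which I would obtain by incorporating $\zeta^k$ into the normalization argument if necessary.
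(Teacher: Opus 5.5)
\textbf{Verdict: one genuine gap.} Most of your outline is correct and follows the paper's proof: Case~1 via upper-level MFCQ, the support relations, boundedness of $(\mu^k,\omega^k,\eta^k,\nu^k)$ via Lemma~\ref{lem:mpcc_mfcq_no_abnormal_multiplier}, and a biactive case split driven by $\rho_1^k/\rho_2^k\to0$. Your case ``$s_i^k>0$ or $\zeta_i^k=\varepsilon_{s,i}^k$'' is correct. So is your ``both inequalities'' branch, where you neatly use $g_i(x^k,y^k)=\zeta_i^k/(2\rho_2^k)\ge\varepsilon_{s,i}^k/(2\rho_2^k)$ instead of the paper's appeal to boundedness of $\eta_i^k$.

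\textbf{Where the gap is.} It sits in your middle case, in the sub-case $z_i^k>0$, $s_i^k=0$, $z_i^k+\gamma_2 g_i(x^k,y^k)\le0$ (so $\lambda_{k,i}^*=0$). There $\nu_i^k=\varepsilon_{z,i}^k+\rho_1^k\bigl(g_i(x^k,y^k)+z_i^k/\gamma_2\bigr)$. To get $\bar\nu_i=0$ you need $\rho_1^k g_i(x^k,y^k)\to0$, i.e.\ boundedness of $\zeta_i^k=2\rho_2^k g_i(x^k,y^k)$, and you leave this open.

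\textbf{Why your suggested routes fail.} Neither one delivers the bound. There is no a priori bound on the other summand $\rho_1^k(\lambda_{k,i}^*-z_i^k)=-\rho_1^k z_i^k$ ``via $\lambda^*$ arguments.'' Adding $\zeta^k$ to the normalization does not help either. $\zeta^k$ enters \eqref{eq:slack-gap-gradient-combined} only through $\eta^k$, so MPCC-MFCQ cannot exclude a normalized limit with $\hat\zeta\neq0$ but $(\hat\mu,\hat\omega,\hat\eta,\hat\nu)=0$. That is exactly the scenario where $\rho_1^k(\lambda_k^*-z^k)$ and $\zeta^k$ blow up and cancel.

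\textbf{The missing idea.} It is a sign observation, and it is what the paper uses. In this sub-case $g_i(x^k,y^k)\le -z_i^k/\gamma_2<0$, so $\eta_i^k=-\rho_1^k z_i^k+2\rho_2^k g_i(x^k,y^k)$ is a sum of two negative terms. Since $\{\eta_i^k\}$ is already bounded by your normalization step:
\begin{itemize}
\item $2\rho_2^k|g_i(x^k,y^k)|\le|\eta_i^k|$;
\item hence $\rho_1^k|g_i(x^k,y^k)|\to0$ by domination;
\item then $0<\rho_1^k z_i^k\le\gamma_2\rho_1^k|g_i(x^k,y^k)|\to0$, and $\nu_i^k\to0$.
\end{itemize}
In the equality branch with $z_i^k=s_i^k=0$ and $g_i(x^k,y^k)\le0$ it is simpler still, since $\eta_i^k=\zeta_i^k$.

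More generally, because $(z^k,s^k)\in\mathcal C$, the terms $\rho_1^k(\lambda_{k,i}^*-z_i^k)$ and $\zeta_i^k$ never have opposite signs, for every $i$ and $k$. Hence $|\zeta_i^k|\le|\eta_i^k|$ always. This is precisely the estimate your middle case needs, and it would also make your normalization fallback valid.
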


\begin{proof}
	Let $\{(x^k, y^k, z^k, s^k)\} \subset \mathbb{R}^{n+m+q+q}$, $\{ \varepsilon^k \} \subset \mathbb{R}^{n+m+q+q}$, and $\{(\rho_1^k,\rho_2^k)\} \subset \mathbb{R}_{++}^2$ satisfy \eqref{approximate_kkt-s}.
	By Lemma~\ref{lem:approximate_kkt_slack_normal_cone_free}, the sequences $\{z^k\}$ and $\{s^k\}$ are bounded, and conditions \eqref{eq:slack-gap-gradient-combined} and \eqref{eq:approximate_kkt_slack_zs} hold with $\{\mu^k\}$ satisfying $\mu_i^k G_i(x^k,y^k)=0$ for $i=1,\ldots,p$.
	Let $(\bar{z},\bar{s}) \in \mathbb{R}_+^q\times\mathbb{R}_+^q$ be any accumulation point of the sequence $\{(z^k,s^k)\}$. It follows that $g(\bar{x},\bar{y})+\bar{s}=0$, $(\bar{z},\bar{s})\in\mathcal C$, and $\bar{z}_ig_i(\bar{x},\bar{y})=0$ for $i=1,\ldots,q$. Consequently, $\bar{u}:=(\bar{x},\bar{y},\bar{z},\bar{s})$ is a feasible point for $\mathrm{(GP)}_{\mathcal C}$ in \eqref{reformulation_slack}, and $\bar{w}:=(\bar{x},\bar{y},\bar{z})$ is a feasible point for the MPCC reformulation in \eqref{eq:kkt-reform}.
	In the remainder of the proof, by passing to a subsequence if necessary, we assume without loss of generality that $(x^k, y^k, z^k, s^k) \to \bar{u}:=(\bar{x},\bar{y},\bar{z},\bar{s})$.
	
	Since $\lim_{k\to\infty}\mathcal{G}_\gamma(x^k,y^k,z^k) = 0$, Lemma~\ref{lem:vanishing_gap_residuals} ensures that $\theta_k^* - y^k \to 0$ and $\lambda_k^* - z^k \to 0$, where
	$ \theta_k^*:=\theta^*(x^k,y^k,z^k)$, and $ 
	\lambda_k^*:=\lambda^*(x^k,y^k,z^k) = \operatorname{Proj}_{\mathbb R_+^q}\left(z^k+\gamma_2g(x^k,y^k)\right)$. 
	We also recall the notation $\omega^k:=-\rho_1^k(\theta_k^*-y^k)$, $\zeta^k:=2\rho_2^k\bigl(g(x^k,y^k)+s^k\bigr)$,  $\eta^k:=\rho_1^k(\lambda_k^*-z^k)+\zeta^k$, and  $\nu^k:=\rho_1^k(g(x^k,y^k)-g(x^k,\theta_k^*))$.
	
	\textbf{Case 1: Both $\{\rho_1^k\}$ and $\{\rho_2^k\}$ are bounded.}
	
	The argument proceeds similarly to Case 1 in the proof of Theorem~\ref{thm:akkt_to_stationarity}, with $\rho^k$ replaced by $\rho_1^k$ and \eqref{eq:old-gap-gradient-combined} replaced by \eqref{eq:slack-gap-gradient-combined}. The only additional term is the slack-penalty multiplier $\zeta^k$. Since $g(x^k,y^k)+s^k\to0$ and $\{\rho_2^k\}$ is bounded, it follows that $\zeta^k\to0$. Thus, $\omega^k\to0$, $\eta^k\to0$, and $\rho_1^kr^k\to0$.
	Repeating the MFCQ contradiction argument utilized in Case 1 of Theorem~\ref{thm:akkt_to_stationarity} establishes the boundedness of $\{\mu^k\}$. Passing to the limit in \eqref{eq:slack-gap-gradient-combined} yields
	$$ 0=\nabla F(\bar{x},\bar{y})+\nabla G(\bar{x},\bar{y})^\top\bar\mu,\qquad
	\bar\mu\ge0,\qquad
	\bar\mu^\top G(\bar{x},\bar{y})=0. $$
	Therefore, \eqref{eq:mpcc-stationarity} holds with $\bar\omega:=0$, $\bar\eta:=0$, and $\bar\nu:=0$. In particular, since $\bar\eta_i=0\ge0$ and $\bar\nu_i=0\ge0$ for all $i\in\mathcal I_{00}(\bar w)$, $\bar w$ is S-stationary for \eqref{eq:kkt-reform}.
	
	\textbf{Case 2: $\rho_1^k\to+\infty$, $\rho_2^k\to+\infty$, and $\rho_2^k/\rho_1^k\to+\infty$.}
	
	We first establish the following result regarding the multiplier sequences. 
	
	\textit{Claim 1: For all sufficiently large $k$, $\mu_i^k=0$ for $i\notin I_G(\bar x,\bar y)$ and $z_i^k=0$ for $i\in\mathcal I_{0+}(\bar w)$. Moreover, $\eta_i^k\to0$ for $i\in\mathcal I_{0+}(\bar w)$ and $\nu_i^k:=\rho_1^k\bigl(g_i(x^k,y^k)-g_i(x^k,\theta_k^*)\bigr)\to0$ for $i\in\mathcal I_{+0}(\bar w)$ as $k\to\infty$.}
	
	If $i\notin I_G(\bar x,\bar y)$, then $G_i(\bar x,\bar y)<0$. By the continuity of $G_i$ and the convergence $(x^k,y^k)\to(\bar x,\bar y)$, the complementarity relation $\mu_i^kG_i(x^k,y^k)=0$ ensures that $\mu_i^k=0$ for all sufficiently large $k$.
	
	Let $i\in\mathcal I_{0+}(\bar w)$. Then $g_i(\bar x,\bar y)<0$, $\bar z_i=0$, and $\bar s_i=-g_i(\bar x,\bar y)>0$. Consequently, $s_i^k>0$ for all sufficiently large $k$. Since $(z^k,s^k)\in\mathcal C$, this eventually necessitates $z_i^k=0$. The $s_i$-component equality for $i \in I_+(s^k)$ in \eqref{eq:approximate_kkt_slack_zs} dictates that $\zeta_i^k=\varepsilon_{s,i}^k\to0$. Furthermore, because $g_i(x^k,y^k)<0$ for all sufficiently large $k$, we obtain $\lambda_{k,i}^*= [\gamma_2g_i(x^k,y^k)]_+=0$, which in turn implies $\eta_i^k=\rho_1^k(\lambda_{k,i}^*-z_i^k)+\zeta_i^k=\zeta_i^k\to0$.
	
	Let $i\in\mathcal I_{+0}(\bar w)$. Then $\bar z_i>0$ and $g_i(\bar x,\bar y)=0$. Hence, for all sufficiently large $k$, we observe $z_i^k>0$ and $z_i^k+\gamma_2g_i(x^k,y^k)>0$, which leads to $\lambda_{k,i}^*=z_i^k+\gamma_2g_i(x^k,y^k)$.
	It follows that $g_i(x^k,y^k)=(\lambda_{k,i}^*-z_i^k)/\gamma_2$. Concurrently, since $z_i^k>0$, the $z_i$-component equality for $i\in I_+(z^k)$ in \eqref{eq:approximate_kkt_slack_zs} gives
	$ g_i(x^k,\theta_k^*)=
	(\lambda_{k,i}^*-z_i^k)/\gamma_2
	-{\varepsilon_{z,i}^k}/{\rho_1^k} $.
	Combining these two identities proves Claim~1, since
	\begin{equation}\label{eq:nuik}
		\nu_i^k
		=\rho_1^k\bigl(g_i(x^k,y^k)-g_i(x^k,\theta_k^*)\bigr)
		=\varepsilon_{z,i}^k\to0.
	\end{equation}
	
	\textit{Claim 2: The multiplier sequences $\{\mu^k\}$, $\{\omega^k\}$, $\{\eta^k\}$, and $\{\nu^k\}$ are bounded.}
	
	We now verify the boundedness of $\{\mu^k\}, \{\omega^k\}, \{\eta^k\}$, and $\{\nu^k\}$. Claim 1 established that $\nu_i^k\to0$ for $i\in\mathcal I_{+0}(\bar w)$, ensuring these components remain bounded. For the remaining components, assume for the sake of contradiction that
	$$ \hat M_k:=\max\{\|\mu^k\|,\|\omega^k\|,\|\eta^k\|,\|(\nu_i^k)_{i\in I_0(\bar z)}\|\}\to+\infty, $$
	where $I_0(\bar z):=\{i\mid \bar z_i=0\}=\mathcal I_{0+}(\bar w)\cup\mathcal I_{00}(\bar w)$. Extracting a subsequence if necessary, we may assume that
	$$ \frac{(\mu^k,\omega^k,\eta^k,\nu^k)}{\hat M_k}\to(\hat\mu,\hat\omega,\hat\eta,\hat\nu)\neq0. $$
	By Claim 1, $\hat\mu_i=0$ for $i\notin I_G(\bar x,\bar y)$, and $\hat\eta_i=0$ for $i\notin I_g(\bar x,\bar y)$ (since $i\notin I_g(\bar x,\bar y)$ is equivalent to $i\in\mathcal I_{0+}(\bar w)$), while $\hat\nu_i=0$ for $i\in\mathcal I_{+0}(\bar w)$. Additionally, $\hat\mu_i\ge0$ for $i\in I_G(\bar x,\bar y)$.
	
	The remainder of the argument mirrors Claim 2 in Theorem~\ref{thm:akkt_to_stationarity}, substituting $\rho_1^k$ for $\rho^k$ and \eqref{eq:slack-gap-gradient-combined} for \eqref{eq:old-gap-gradient-combined}. Specifically, given $\omega^k=-\rho_1^k(\theta_k^*-y^k)$ and \(r^k=0\) whenever \(\theta_k^*=y^k\), and \(\|r^k\|/|\theta_k^*-y^k\|\to0\) otherwise, we deduce $\rho_1^k\|r^k\|/\hat M_k\to0$. Dividing \eqref{eq:slack-gap-gradient-combined} by $\hat M_k$ recovers the identical limiting $(x,y)$-stationarity relation in \eqref{limit_xy_stationarity}. Similarly, Taylor expansion yields
	$$ \nu_i^k=\nabla_y g_i(x^k,y^k)^\top\omega^k+\rho_1^kr_{g,i}^k,\qquad
	\frac{\rho_1^k r_{g,i}^k}{\hat M_k}\to0, $$
	which recovers the identical limiting $g$-gradient relation as \eqref{limit_Izero}. Applying the assumption that MPCC-MFCQ holds at \(\bar{w}\) and  Lemma~\ref{lem:mpcc_mfcq_no_abnormal_multiplier} then contradicts the normalization condition\linebreak[4] $(\hat\mu,\hat\omega,\hat\eta,\hat\nu)\neq0$. Therefore, $\hat M_k$ is bounded, and consequently, the sequences $\{\mu^k\}$, $\{\omega^k\}$, $\{\eta^k\}$, and $\{\nu^k\}$ are bounded. This proves Claim 2.
	
	We now prove that $\bar w$ is M-stationary for \eqref{eq:kkt-reform}.
	
	Because the sequences $\{\mu^k\}, \{\omega^k\}, \{\eta^k\},$ and $\{\nu^k\}$ are bounded, we may extract a further subsequence, if necessary, to assume that 
	$$ (\mu^k,\omega^k,\eta^k,\nu^k)\to(\bar\mu,\bar\omega,\bar\eta,\bar\nu). $$
	Applying the same limit transition as in Theorem~\ref{thm:akkt_to_stationarity} to \eqref{eq:slack-gap-gradient-combined} and the Taylor expansion above yields the MPCC stationarity system \eqref{eq:mpcc-stationarity}; in particular,
	$$ \bar\eta_i=0\quad (i\in\mathcal I_{0+}(\bar w)),\qquad
	\bar\nu_i=0\quad (i\in\mathcal I_{+0}(\bar w)). $$
	It remains only to verify the stronger M-stationarity sign condition on the biactive set $\mathcal I_{00}(\bar w)$.
	
	Fix $i\in\mathcal I_{00}(\bar w)$. Since $(z_i^k,s_i^k)\in\{(a,b)\in\mathbb R_+^2\mid ab=0\}$, exactly one of the following three alternatives must hold for each $k$:
	$$ z_i^k>0,\ s_i^k=0;\qquad
	z_i^k=0,\ s_i^k>0;\qquad
	z_i^k=s_i^k=0. $$
	With only finitely many alternatives, at least one must hold along an infinite subsequence. Passing to this subsequence and relabeling it if necessary, we may assume a single alternative holds for all $k$. Since $(\eta_i^k,\nu_i^k)\to(\bar\eta_i,\bar\nu_i)$, this refinement does not alter the limiting multipliers.
	
	\textit{First alternative: $z_i^k>0$ and $s_i^k=0$ for all $k$.}
	We will show that $\bar\nu_i=0$. If $\lambda_{k,i}^*=z_i^k+\gamma_2g_i(x^k,y^k)$ holds along an infinite subsequence, we may pass to this subsequence (relabeling if necessary) and assume this relation holds for all $k$. Utilizing the same logic as in the derivation of \eqref{eq:nuik}, the $z_i$-component equality for $i\in I_+(z^k)$ in \eqref{eq:approximate_kkt_slack_zs} gives $\nu_i^k = \rho_1^k\bigl(g_i(x^k,y^k)-g_i(x^k,\theta_k^*)\bigr) = \varepsilon_{z,i}^k \to 0$. 
	
	Alternatively, if $\lambda_{k,i}^*=0$ along an infinite subsequence, passing to this subsequence and relabeling it allows us to assume $\lambda_{k,i}^*=0$ and $z_i^k+\gamma_2g_i(x^k,y^k)\le0$ for all $k$. In this scenario,
	$$ \eta_i^k=-\rho_1^kz_i^k+2\rho_2^kg_i(x^k,y^k). $$
	The boundedness of $\{\eta_i^k\}$, coupled with $-\rho_1^kz_i^k < 0$ and $g_i(x^k,y^k)\le-z_i^k/\gamma_2<0$, implies that $\rho_2^k|g_i(x^k,y^k)|$ is bounded. Since $\rho_1^k/\rho_2^k\to0$, it subsequently follows that $\rho_1^k|g_i(x^k,y^k)|\to0$. Consequently, $0 < \rho_1^kz_i^k \le - \gamma_2\rho_1^kg_i(x^k,y^k) \to 0$. Because the $z_i$-component equality for $i\in I_+(z^k)$ in \eqref{eq:approximate_kkt_slack_zs} gives
	$$ g_i(x^k,\theta_k^*)= -\frac{z_i^k}{\gamma_2}-\frac{\varepsilon_{z,i}^k}{\rho_1^k}, $$
	we obtain
	$$ \nu_i^k=\rho_1^k\bigl(g_i(x^k,y^k)-g_i(x^k,\theta_k^*)\bigr) = \rho_1^k\left(g_i(x^k,y^k)+\frac{z_i^k}{\gamma_2}\right)+\varepsilon_{z,i}^k\to0. $$
	Thus, $\bar\nu_i=0$ under the first alternative.
	
	\textit{Second alternative: $z_i^k=0$ and $s_i^k>0$ for all $k$.}
	We will show that $\bar\eta_i=0$. The $s_i$-component equality for $i \in I_+(s^k)$ in \eqref{eq:approximate_kkt_slack_zs} provides $\zeta_i^k=\varepsilon_{s,i}^k\to0$. Furthermore,
	$$ \eta_i^k=\rho_1^k\lambda_{k,i}^*+\zeta_i^k
	=\gamma_2\rho_1^k[g_i(x^k,y^k)]_++\zeta_i^k. $$
	If $g_i(x^k,y^k)<0$, then $\lambda_{k,i}^*=0$ and $\eta_i^k=\zeta_i^k\to0$. If $g_i(x^k,y^k)\ge0$, then $\zeta_i^k=2\rho_2^k(g_i(x^k,y^k)+s_i^k)\ge2\rho_2^kg_i(x^k,y^k)$, and hence, leveraging $\rho_1^k/\rho_2^k\to0$, we have
	$$ 0\le \rho_1^k[g_i(x^k,y^k)]_+
	\le \frac{\rho_1^k}{2\rho_2^k}\zeta_i^k\to0. $$
	Thus $\eta_i^k\to0$, meaning $\bar\eta_i=0$ under the second alternative.
	
	\textit{Third alternative: $z_i^k=s_i^k=0$ for all $k$.}
	In this scenario, $\lambda_{k,i}^*=\gamma_2[g_i(x^k,y^k)]_+$ and $\zeta_i^k=2\rho_2^kg_i(x^k,y^k)$. The limiting-normal inclusion in \eqref{eq:approximate_kkt_slack_zs} separates into three branches. By passing, if necessary, to a further infinite subsequence and relabeling it, we may assume a single branch holds for all $k$.
	
	On the regular-normal branch,
	$$ g_i(x^k,\theta_k^*)\le [g_i(x^k,y^k)]_+-\frac{\varepsilon_{z,i}^k}{\rho_1^k},
	\qquad
	\zeta_i^k\ge\varepsilon_{s,i}^k. $$
	The second inequality ensures $\bar\eta_i\ge0$, because $\eta_i^k=\gamma_2\rho_1^k[g_i(x^k,y^k)]_++\zeta_i^k\ge\varepsilon_{s,i}^k \to 0$, while the first yields
	$$ \nu_i^k
	=\rho_1^k\bigl(g_i(x^k,y^k)-g_i(x^k,\theta_k^*)\bigr)
	\ge \rho_1^k\bigl(g_i(x^k,y^k)-[g_i(x^k,y^k)]_+\bigr)+\varepsilon_{z,i}^k. $$
	If $g_i(x^k,y^k)\ge0$, the right-hand side simplifies to $\varepsilon_{z,i}^k$, yielding $\nu_i^k \ge \varepsilon_{z,i}^k \to 0$. If $g_i(x^k,y^k)<0$, the boundedness of $\eta_i^k=\zeta_i^k=2\rho_2^kg_i(x^k,y^k)$ combined with $\rho_1^k/\rho_2^k\to0$ implies $\rho_1^kg_i(x^k,y^k)\to0$, thereby giving $\nu_i^k \ge \rho_1^kg_i(x^k,y^k) + \varepsilon_{z,i}^k \to 0$. Consequently, both $\bar\eta_i\ge0$ and $\bar\nu_i\ge0$ hold on this branch.
	
	On the $z_i$-component equality branch,
	$$ g_i(x^k,\theta_k^*)=[g_i(x^k,y^k)]_+-\frac{\varepsilon_{z,i}^k}{\rho_1^k}. $$
	Thus,
	$\nu_i^k
	=\rho_1^k\bigl(g_i(x^k,y^k)-[g_i(x^k,y^k)]_+\bigr)+\varepsilon_{z,i}^k. $
	If $g_i(x^k,y^k)\ge0$, then $\nu_i^k=\varepsilon_{z,i}^k\to0$. If $g_i(x^k,y^k)<0$, applying the same boundedness argument as above yields $\rho_1^kg_i(x^k,y^k)\to0$, meaning $\nu_i^k=\rho_1^kg_i(x^k,y^k) + \varepsilon_{z,i}^k\to0$. Hence, $\bar\nu_i=0$ on this branch.
	
	On the $s_i$-component equality branch, we have $\zeta_i^k=\varepsilon_{s,i}^k\to0$. If $g_i(x^k,y^k)<0$, then $[g_i(x^k,y^k)]_+=0$. If $g_i(x^k,y^k)\ge0$, then since $s_i^k=0$, the relation $\zeta_i^k=2\rho_2^k(g_i(x^k,y^k)+s_i^k)$ provides $\zeta_i^k=2\rho_2^kg_i(x^k,y^k)$. Utilizing $\rho_1^k/\rho_2^k\to0$, this results in
	$$ 0\le \rho_1^k[g_i(x^k,y^k)]_+
	=\frac{\rho_1^k}{2\rho_2^k}\zeta_i^k\to0. $$
	Thus, regardless of the sign of $g_i(x^k,y^k)$, we find $\rho_1^k[g_i(x^k,y^k)]_+\to0$. Because $z_i^k=0$, we have $\lambda_{k,i}^*=\gamma_2[g_i(x^k,y^k)]_+$, giving
	$$ \eta_i^k=\rho_1^k\lambda_{k,i}^*+\zeta_i^k
	=\gamma_2\rho_1^k[g_i(x^k,y^k)]_++\zeta_i^k\to0. $$
	Consequently, $\bar\eta_i=0$ on this branch.
	
	Therefore, for every $i\in\mathcal I_{00}(\bar w)$, either $\bar\eta_i=0$, or $\bar\nu_i=0$, or both $\bar\eta_i\ge0$ and $\bar\nu_i\ge0$ hold. This implies
	$$ (\bar\eta_i>0\ \text{and}\ \bar\nu_i>0)
	\quad\text{or}\quad
	\bar\eta_i\bar\nu_i=0,
	\qquad i\in\mathcal I_{00}(\bar w). $$
	Thus, $\bar w$ is M-stationary for \eqref{eq:kkt-reform}. This completes the proof.
\end{proof}

\section{Algorithm and Convergence Analysis}\label{sec5}
Using the slack-based two-parameter formulation, we develop an inexact regularized gap-function bilevel slack-penalty method (iG-BSPM) for the slack reformulation \(\mathrm{(GP)}_{\mathcal C}\) in \eqref{reformulation_slack}. At outer iteration \(k\), the method approximately solves the following penalized subproblem:
\begin{equation}\label{penprob2}
	\min_{(x,y,z,s)\in\Omega\times \mathcal C}
	\psi_{\sigma_k}(x,y,z,s)
	:=
	\frac{1}{\sigma_{k,1}}F(x,y)+\mathcal G_\gamma(x,y,z)+\frac{\sigma_{k,2}}{2}\|g(x,y)+s\|^2,
\end{equation}
where $\mathcal C := \big\{ (z,s) \in \mathbb R_+^q\times\mathbb R_+^q \bigm| z_is_i=0,\ i=1,\ldots,q \big\}$ denotes the complementarity set, and $\sigma_k:=(\sigma_{k,1},\sigma_{k,2})$ is the penalty parameter at the outer iteration $k$.

For fixed \(\sigma_k\), the inner loop alternately updates \((x,y)\) and \((z,s)\), where \(z_i,s_i\le M_{z,k}\) are imposed for the convergence analysis for $i=1,\ldots,q$. The outer loop updates the penalty and bound parameters and applies a feasibility correction when needed. Throughout this section, we assume that \(\Omega\) is bounded and the gradient \(\nabla F\) is Lipschitz continuous on \(\Omega\).

\begin{assumption}\label{asup:bound}
	The set $\Omega= \big\{ (x,y) \in \mathbb R^n\times\mathbb R^m \bigm| G(x,y) \le 0 \big\}$ is nonempty, closed, convex, and bounded. 
\end{assumption}

\begin{assumption}\label{asup:F_lis_cts}
	The gradient \(\nabla F\) is \(L_F\)-Lipschitz continuous on \(\Omega\) for some \(L_F>0\).
\end{assumption}

\subsection{Algorithm Design}

The gradient of $\mathcal G_\gamma$ contains $\theta^*(x,y,z)$ from \eqref{eq:deftheta}.  Solving this subproblem exactly at each iteration may be expensive. We therefore compute $\theta^*(x,y,z)$ inexactly and update this approximation during the inner loop. The indices $k$ and $\ell$ denote the outer and inner iterations. Given $(x^k, y^k, z^k, s^k) \in \Omega\times\mathcal{C}$, the feasibility-corrected iterate $(x^k, \tilde{y}^k, \tilde{z}^k, \tilde{s}^k) \in \Omega\times\mathcal{C}$, and $\tilde{\theta}^k \in \mathbb R^m$, we initialize the inner loop as $(x^{k,0}, y^{k,0}, z^{k,0}, s^{k,0}, \theta^{k,0}) := (x^k, \tilde y^k, \tilde z^k, \tilde s^k, \tilde\theta^k)$. To keep the inner iterates bounded, $(z,s)$ is restricted to
\[
\mathcal C_{k} := \big\{ (z,s) \in \mathbb R_+^q \times \mathbb R_+^q \bigm| z_i s_i = 0,\ z_i \le M_{z,k},\ s_i \le M_{z,k},\ i=1,\ldots,q \big\}.
\]
The projection onto $\mathcal C_{k}$ admits a closed-form formula, and $M_{z,k}$ is fixed during the $k$-th inner loop.

\paragraph*{1. The Inner Loop: Alternating Projected Gradient Steps}

For a fixed $k$, we omit the superscript $k$ when there is no ambiguity. Thus, $(x^{k,\ell}, y^{k,\ell}, z^{k,\ell}, s^{k,\ell})$ is written as $(x^{\ell}, y^{\ell}, z^{\ell}, s^{\ell})$, and similarly for $\theta^\ell$, $\theta^{\ell+1/2}$, $d_{xy}^\ell$, $d_{zs}^\ell$, $\alpha_\ell$, and $\beta_\ell$.

\paragraph*{Update $(x,y)$ (Fixing $(z,s)$)}
We first update $(x,y)$ with $(z,s)$ fixed. The search direction is obtained from $\nabla_{x,y}\psi_{\sigma_k}$ at $(x^\ell,y^\ell,z^\ell,s^\ell)$, with $\theta^*(x^\ell,y^\ell,z^\ell)$ in \eqref{Ggradient} replaced by $\theta^\ell \in \mathbb R^m$. This gives
\begin{equation}\label{dxy_new}
	d_{xy}^{\ell} :=
	\frac{1}{\sigma_{k,1}}\nabla F(x^\ell, y^\ell)
	+
	\sigma_{k,2}(\nabla g(x^\ell,y^\ell))^\top
	\bigl(g(x^\ell,y^\ell)+s^\ell\bigr)
	+
	\begin{pmatrix}
		\nabla_x \mathcal L(x^\ell, y^\ell, \lambda^\ell) - \nabla_x \mathcal L(x^\ell, \theta^\ell, z^\ell) \\[4pt]
		\nabla_y \mathcal L(x^\ell, y^\ell, \lambda^\ell) - (y^{\ell}- \theta^{\ell}) / \gamma_1
	\end{pmatrix},
\end{equation}
where $\mathcal L(x,y,z):=f(x,y)+z^\top g(x,y)$ is the lower-level Lagrangian, and $\lambda^\ell$ is defined by
\[
\lambda^\ell
:=
\operatorname{Proj}_{\mathbb R_+^q}
\bigl( z^\ell + \gamma_2 g(x^\ell,y^\ell) \bigr).
\]
Given $d_{xy}^{\ell}$, the variables $(x,y)$ are updated via the projected gradient step:
\begin{equation}\label{update_xy_slack_text}
	(x^{\ell+1},y^{\ell+1})
	=
	\operatorname{Proj}_{\Omega}
	\bigl(
	(x^\ell,y^\ell) - \alpha_\ell d_{xy}^\ell
	\bigr),
\end{equation}
where $\alpha_\ell > 0$ is determined by the line search below. To avoid evaluating the exact Moreau envelope \(M_{\mathcal L}^{\gamma_1}(x,y,z^\ell)\) and the exact gap value \(\mathcal G_\gamma(x,y,z^\ell)\), or estimating their Lipschitz moduli, we employ a computable line search based on an adaptive descent condition. For any \(\theta\in\mathbb R^m\), define
\[
	\hat{\psi}_{\sigma_k}(x,y,z,s; \theta)
	:=
	\frac{1}{\sigma_{k,1}}F(x,y) + \hat{\mathcal G}_\gamma(x,y,z;\theta) + \frac{\sigma_{k,2}}{2}\|g(x,y)+s\|^2,
\]
where
\[
	\hat{\mathcal G}_\gamma(x,y,z;\theta)
	:=
	f(x,y)
	+ \mathcal P_{\gamma_2}(x,y,z)
	- f(x,\theta) - z^\top g(x,\theta)
	- \frac{1}{2\gamma_1}\|\theta-y\|^2.
\]
Then $d_{xy}^{\ell}=\nabla_{x,y}\hat{\psi}_{\sigma_k}(x^\ell,y^\ell,z^\ell,s^\ell;\theta^\ell)$. Let $0<\widehat L_{k,\ell}^{xy,0}\le \widehat{L}$, $c_\alpha > 0$, and $\kappa_L > 1$ be given. For $j = 0, 1, \ldots$, set $\widehat L_{k,\ell}^{xy,j} := \widehat L_{k,\ell}^{xy,0}\kappa_L^j$, $\alpha_\ell^j := 1/(\widehat L_{k,\ell}^{xy,j}+c_\alpha)$, and \((x^{\ell+1,j},y^{\ell+1,j}):=\operatorname{Proj}_{\Omega}\bigl((x^\ell,y^\ell)-\alpha_\ell^j d_{xy}^\ell\bigr)\). We accept the first index $j = j^{xy}_\ell$ for which the following descent condition is satisfied:
\begin{equation}\label{linesearch_x}
	\begin{aligned}
		&\quad \hat{\psi}_{\sigma_k}(x^{\ell+1,j},y^{\ell+1,j},z^\ell,s^\ell;\theta^\ell)
		+
		\frac{\gamma_1}{2} R(\theta^\ell;x^{\ell+1,j},y^{\ell+1,j},z^\ell)^2 \\
		&\le
		\hat{\psi}_{\sigma_k}(x^\ell,y^\ell,z^\ell,s^\ell;\theta^\ell)
		- c_\alpha \|(x^{\ell+1,j},y^{\ell+1,j})-(x^\ell,y^\ell)\|^2
		+
		\gamma_1 (\zeta^\theta_{k,\ell})^2,
	\end{aligned}
\end{equation}
where $\zeta^\theta_{k,\ell}$ controls the solution's inexactness. The stepsize in \eqref{update_xy_slack_text} is then set to \(\alpha_\ell:=\alpha_\ell^{j_\ell^{xy}}\). Here \(R\) denotes the first-order stationarity residual of the Moreau-envelope subproblem \eqref{def_moreau_envelope}, given by
\[
	R(\theta; x, y, z)
	:=
	\Big\|
	\nabla_y\mathcal L(x,\theta,z)
	+
	\frac{1}{\gamma_1}(\theta-y)
	\Big\|.
\]
Due to the strong convexity of \eqref{def_moreau_envelope} with respect to $\theta$, $R(\theta; x, y, z) = 0$ holds if and only if $\theta = \theta^*(x, y, z)$. 

\paragraph*{Update $(z,s)$ (Fixing $(x,y)$)}
After obtaining $(x^{\ell+1},y^{\ell+1})$, we update $(z,s)$ over the bounded complementarity set $\mathcal C_{k}$. For the prescribed tolerance \(\zeta^\theta_{k,\ell}\), we first compute \(\theta^{\ell+1/2}\) satisfying
\begin{equation}\label{theta_slack_half}
	R(\theta^{\ell+1/2}; x^{\ell+1}, y^{\ell+1}, z^\ell) \le \zeta^\theta_{k,\ell}.
\end{equation}

Replacing \(\theta^*(x^{\ell+1},y^{\ell+1},z^\ell)\) in \eqref{Ggradient} with \(\theta^{\ell+1/2}\), we define the computable direction
\begin{equation}\label{d_zs}
	d_{zs}^\ell
	:=
	\left(
	\nabla_z\mathcal P_{\gamma_2}(x^{\ell+1},y^{\ell+1},z^\ell) - g(x^{\ell+1},\theta^{\ell+1/2}),
	\;
	\sigma_{k,2}\bigl(g(x^{\ell+1},y^{\ell+1})+s^\ell\bigr)
	\right).
\end{equation}
The variables \((z,s)\) are updated by the projected-gradient step
\begin{equation}\label{update_zs_full}
	(z^{\ell+1},s^{\ell+1})
	=
	\operatorname{Proj}_{\mathcal C_k}
	\bigl(
	(z^\ell,s^\ell)-\beta_\ell d_{zs}^\ell
	\bigr),
\end{equation}
where $\beta_\ell > 0$ is determined by backtracking. Since $\mathcal C_k$ is nonconvex, its Euclidean projection may be set-valued. Throughout, $\operatorname{Proj}_{\mathcal C_k}$ denotes the Euclidean projection specified below; when the two candidate projections have the same distance, the first branch is selected. For any \((z,s)\in\mathbb R^q\times\mathbb R^q\), let $[t]_{[0,M_{z,k}]}:=\min\{M_{z,k},[t]_+\}$. Then for any $i=1,\ldots,q$, the \(i\)-th component pair of \(\operatorname{Proj}_{\mathcal C_k}(z,s)\) is given by
\[
\bigl(\operatorname{Proj}_{\mathcal C_k}(z,s)\bigr)_i
=
\begin{cases}
	\bigl([z_i]_{[0,M_{z,k}]},0\bigr),
	&
	\bigl(z_i-[z_i]_{[0,M_{z,k}]}\bigr)^2+s_i^2
	\le
	z_i^2+\bigl(s_i-[s_i]_{[0,M_{z,k}]}\bigr)^2,
	\\
	\bigl(0,[s_i]_{[0,M_{z,k}]}\bigr),
	&
	\text{otherwise},
\end{cases}.
\]
Let $0<\widehat L_{k,\ell}^{zs,0}\le \widehat{L}$, $c_\beta > 0$, and $\kappa_L > 1$ be given. For $j = 0, 1, \ldots$, set $\widehat L_{k,\ell}^{zs,j} := \widehat L_{k,\ell}^{zs,0}\kappa_L^j$, $\beta_\ell^j := 1/(\widehat L_{k,\ell}^{zs,j}+c_\beta)$, and \((z^{\ell+1,j},s^{\ell+1,j}):=\operatorname{Proj}_{\mathcal C_k}\bigl((z^\ell,s^\ell)-\beta_\ell^j d_{zs}^\ell\bigr)\). We accept the first index $j = j^{zs}_\ell$ for which the following condition holds:
\begin{equation}\label{linesearch_zs}
	\begin{aligned}
		&\quad \hat{\psi}_{\sigma_k}(x^{\ell+1},y^{\ell+1},z^{\ell+1,j},s^{\ell+1,j};\theta^{\ell+1/2})
		+
		\frac{\gamma_1}{2} R(\theta^{\ell+1/2};x^{\ell+1},y^{\ell+1},z^{\ell+1,j})^2 \\
		&\le
		\hat{\psi}_{\sigma_k}(x^{\ell+1},y^{\ell+1},z^\ell,s^\ell;\theta^{\ell+1/2})
		- c_\beta \|(z^{\ell+1,j},s^{\ell+1,j})-(z^\ell,s^\ell)\|^2
		+
		\gamma_1 (\zeta^\theta_{k,\ell})^2.
	\end{aligned}
\end{equation}
The stepsize in \eqref{update_zs_full} is then set to \(\beta_\ell:=\beta_\ell^{j_\ell^{zs}}\). We then choose an approximation $\theta^{\ell+1} \approx \theta^*(x^{\ell+1},y^{\ell+1},z^{\ell+1})$ satisfying the inner-iteration inexactness condition
\begin{equation}\label{theta_slack}
	R(\theta^{\ell+1}; x^{\ell+1}, y^{\ell+1}, z^{\ell+1}) \le \zeta^\theta_{k,\ell+1}.
\end{equation} 

\paragraph*{Inner Loop Termination}
The inner loop terminates at iteration $\ell$ once an approximate stationary point of the penalized subproblem \eqref{penprob2} is attained, certified by the following conditions:
\begin{equation}\label{inner_stop_slack}
	\begin{aligned}
		\|(x^{k,\ell+1},y^{k,\ell+1},z^{k,\ell+1},s^{k,\ell+1}) - (x^{k,\ell},y^{k,\ell},z^{k,\ell},s^{k,\ell})\| &\le \frac{\tau_k}{\sigma_{k,1}\sigma_{k,2}M_{z,k}},\\
		\sigma_{k,1} M_{z,k} \max\{\zeta^\theta_{k,\ell},\zeta^\theta_{k,\ell+1}\} &\le \tau_k,
	\end{aligned}
\end{equation}
where $\tau_k > 0$ is a prescribed tolerance parameter controlling the accuracy of the approximate stationarity and satisfies \(\tau_k\to0\) as \(k\to\infty\). Upon termination, we update the outer iterates by setting \((x^{k+1},y^{k+1},z^{k+1},s^{k+1}):=(x^{k,\ell+1},y^{k,\ell+1},z^{k,\ell+1},s^{k,\ell+1})\), \(\theta^{k+1}:=\theta^{k,\ell+1}\), and \(\zeta^\theta_{k+1,0}:=\zeta^\theta_{k,\ell+1}\).

\paragraph*{2. The Outer Loop: Penalty Update and Feasibility Correction}

\paragraph*{Penalty Parameter Update}
To avoid solving \eqref{def_moreau_envelope} when updating \(\sigma_k\), we estimate the constraint violation using \(\hat{\mathcal G}_\gamma(x,y,z;\theta)\), which replaces \(\theta^*(x,y,z)\) in \eqref{vg_eq2} with the available point \(\theta\):
\begin{equation}\label{def_t_slack}
	t^{k+1} :=
	\hat{\mathcal G}_\gamma(x^{k+1},y^{k+1},z^{k+1};\theta^{k+1}) + \frac{\sigma_{k,2}}{2} \|g(x^{k+1},y^{k+1})+s^{k+1}\|^2.
\end{equation}
The penalty residual test is subsequently evaluated as:
\begin{equation}\label{penalty_residual_test_slack}
	t^{k+1} \le c_\rho\tau_k,
\end{equation}
where \(c_\rho>0\). Let \(\varrho_\rho>0\). Based on this test, the penalty parameters are updated according to
\begin{equation}\label{update_rho_slack}
	\sigma_{k+1}
	:=
	\begin{cases}
		\sigma_k,
		&
		\text{if \eqref{penalty_residual_test_slack} holds},
		\\[3pt]
		(\sigma_{k,1}+\varrho_\rho,\sigma_{k,2}+\varrho_\rho),
		&
		\text{otherwise}.
	\end{cases}
\end{equation}

\paragraph*{Feasibility Correction Procedure}
When the penalty residual test \eqref{penalty_residual_test_slack} fails and \(\sigma_k\) is increased, a feasibility correction is applied to avoid convergence to an infeasible stationary point. 

Given a prescribed constant $\tilde c > 0$, we first check whether the current iterate satisfies
\begin{equation}\label{correction_not_needed_slack}
	\hat{\mathcal G}_\gamma(x^{k+1},y^{k+1},z^{k+1};\theta^{k+1}) + \frac{\sigma_{k,2}}{2}\|g(x^{k+1},y^{k+1})+s^{k+1}\|^2
	\le \frac{\tilde c}{\sigma_{k,1}}.
\end{equation}
If \eqref{correction_not_needed_slack} holds, no correction is performed. Otherwise, with \(x=x^{k+1}\) fixed, we approximately solve the lower-level problem \eqref{LL} subject additionally to the upper-level feasibility constraint \(G(x^{k+1},y)\le 0\):
\begin{equation}\label{inexacty_yz_subproblem}
	\min_{y \in \mathbb R^m} \quad f(x^{k+1},y) \quad 
	\mathrm{s.t.} \quad
	g(x^{k+1},y) \le 0, \quad G(x^{k+1},y) \le 0.
\end{equation}

Specifically, we compute an approximate pair $(\tilde y^{k+1}, \tilde z^{k+1})$, along with an approximation $\tilde{\theta}^{k+1} \approx \theta^*(x^{k+1}, \tilde y^{k+1}, \tilde z^{k+1})$, such that $(x^{k+1}, \tilde y^{k+1},\tilde z^{k+1}) \in \Omega\times \mathbb R_+^q$, $R(\tilde\theta^{k+1}; x^{k+1},\tilde y^{k+1},\tilde z^{k+1}) \le \zeta^\theta_{k+1,0}$, and the following inexactness criterion holds:
\begin{equation}\label{inexacty_yz_slack}
	\begin{aligned}
		&\quad \hat{\mathcal G}_\gamma(x^{k+1},\tilde y^{k+1},\tilde z^{k+1};\tilde \theta^{k+1}) 
		+ \frac{\gamma_1}{2}R(\tilde \theta^{k+1};x^{k+1},\tilde y^{k+1},\tilde z^{k+1})^2 \\
		&+\frac{\sigma_{k,2}}{2} \left(
		\|g_{I_+(\tilde z^{k+1})}
		(x^{k+1},\tilde y^{k+1})\|^2
		+
		\|[g_{I_0(\tilde z^{k+1})}
		(x^{k+1},\tilde y^{k+1})]_+\|^2
		\right)
		\le
		\frac{\tilde c}{\sigma_{k,1}},
	\end{aligned}
\end{equation}
where $I_+(z):=\{i\mid z_i>0\}$, $I_0(z):=\{i\mid z_i=0\}$.

To ensure that the correction procedure is well defined, we require that, for every \(x\in X\), there exists an upper-level feasible lower-level solution admitting a lower-level KKT multiplier.

\begin{assumption}\label{asup4}
	For every $x \in X$, there exist $y_x \in S(x) \cap \{y \in \mathbb{R}^m \mid (x,y) \in \Omega\}$ and $z_x\in\mathcal M(x,y_x)$.
\end{assumption}

To see this, fix \(x=x^{k+1}\) and choose \((y_x,z_x)\) as in Assumption~\ref{asup4}. Since \(z_x\in\mathcal M(x,y_x)\), optimality and complementarity give \(\theta^*(x,y_x,z_x)=y_x\), \(R(y_x;x,y_x,z_x)=0\), \(g_{I_+(z_x)}(x,y_x)=0\), and \([g_{I_0(z_x)}(x,y_x)]_+=0\), while Lemma~\ref{lem:gap_nonnegative_exact} gives \(\hat{\mathcal G}_\gamma(x,y_x,z_x;y_x)=\mathcal G_\gamma(x,y_x,z_x)=0\). Thus, the left-hand side of \eqref{inexacty_yz_slack} vanishes at \((\tilde y^{k+1},\tilde z^{k+1},\tilde\theta^{k+1})=(y_x,z_x,y_x)\), proving that the correction procedure is well defined.

 We then construct \(\tilde s^{k+1}\) component-wise as follows:
\[
\tilde{s}_i^{k+1} := 	
\begin{cases}
	0,
	&
	\text{if } \tilde z_i^{k+1} > 0,
	\\[3pt]
	[-g_i(x^{k+1},\tilde y^{k+1})]_+,
	&
	\text{otherwise},
\end{cases}
\qquad \forall\ i = 1, \ldots, q.
\]
The corrected tuple \((x^{k+1},\tilde y^{k+1},\tilde z^{k+1},\tilde s^{k+1})\) initializes the inner loop at outer iteration \(k+1\).

\paragraph*{Truncation-Bound Update}
At the conclusion of each outer iteration, update
\begin{equation}\label{update_Mzk_slack}
	M_{z,k+1}
	:=
	\max\left\{
	M_{z,k}+\Delta_z,\,
	\|\tilde z^{k+1}\|,\,
	\|\tilde s^{k+1}\|
	\right\},
\end{equation}
where \(\Delta_z>0\) is a prescribed constant. By \eqref{update_Mzk_slack}, \(M_{z,k+1}\ge M_{z,k}+\Delta_z\), and hence $M_{z,k} \to +\infty$ as $k \to \infty$.

Algorithm~\ref{alg2} summarizes the projected slack-penalty method.

\begin{algorithm}[!htb]
	\caption{Inexact Regularized Gap-Function Bilevel Slack-Penalty Method (iG-BSPM)}
	\label{alg2}
	\small
	\begin{algorithmic}[1]
		\STATE{\textbf{Input}: Initial iterates \((x^0,\tilde y^0,\tilde z^0,\tilde s^0,\tilde\theta^0)\) with \((x^0,\tilde y^0)\in\Omega\), \((\tilde z^0,\tilde s^0)\in\mathcal C_0\), \(\tilde\theta^0\in\mathbb R^m\), and \(R(\tilde\theta^0;x^0,\tilde y^0,\tilde z^0)\le \zeta^\theta_{0,0}\), penalty parameters \(\sigma_0=(\sigma_{0,1},\sigma_{0,2})\in\mathbb R_{++}^2\), constants \(\varrho_\rho>0\), \(c_\rho>0\), and \(\tilde c>0\), truncation parameters \(M_{z,0}>0\) and \(\Delta_z>0\), regularization parameters \(\gamma_1,\gamma_2>0\), line-search parameters \(c_\alpha,c_\beta>0\), \(\kappa_L>1\), and \(\widehat L>0\), together with a rule producing initial line-search estimates \(0<\widehat L_{k,\ell}^{xy,0}\le\widehat L\) and \(0<\widehat L_{k,\ell}^{zs,0}\le\widehat L\) for all \(k,\ell\), a sequence \(\{\tau_k\}\subset\mathbb R_{++}\) satisfying \(\tau_k\to0\), an initial tolerance \(\zeta^\theta_{0,0}\in\mathbb R_+\), a tail inexactness array \(\{\zeta^\theta_{k,\ell}\}_{k\ge0,\,\ell\ge1}\subset\mathbb R_+\) satisfying \(\sum_{\ell=1}^{\infty}(\zeta^\theta_{k,\ell})^2<+\infty\) for every \(k\) and \(\lim_{k\to\infty}\sum_{\ell=1}^{\infty}(\zeta^\theta_{k,\ell})^2=0\), and a stopping tolerance \(\varepsilon_{\mathrm{stop}}>0\).}
		\FOR{$k=0,\ 1,\cdots$}
		\STATE{Initialize the inner loop with \((x^{k,0},y^{k,0},z^{k,0},s^{k,0},\theta^{k,0}):=(x^k,\tilde y^k,\tilde z^k,\tilde s^k,\tilde\theta^k)\).}
		\STATE{\alglabel{Inner loop}}
		\FOR{$\ell=0,1,\ldots$}
		\STATE{Construct \(d_{xy}^{k,\ell}\) according to \eqref{dxy_new}; choose \(\alpha_{k,\ell}\) by the computable line search \eqref{linesearch_x}, and update \((x^{k,\ell+1},y^{k,\ell+1})\in\Omega\) by \eqref{update_xy_slack_text}.}
		\STATE{Find an inexact approximation \(\theta^{k,\ell+1/2}\in\mathbb R^m\) satisfying \eqref{theta_slack_half}.}
		\STATE{Construct \(d_{zs}^{k,\ell}\) according to \eqref{d_zs}; choose \(\beta_{k,\ell}\) by the computable line search \eqref{linesearch_zs}, and update \((z^{k,\ell+1},s^{k,\ell+1})\in\mathcal C_k\) by \eqref{update_zs_full}.}
		\STATE{Find an inexact approximation \(\theta^{k,\ell+1}\in\mathbb R^m\) satisfying \eqref{theta_slack}.}
		\IF{$\|(x^{k,\ell+1},y^{k,\ell+1},z^{k,\ell+1},s^{k,\ell+1})-(x^{k,\ell},y^{k,\ell},z^{k,\ell},s^{k,\ell})\|\le \tau_k/(\sigma_{k,1}\sigma_{k,2}M_{z,k})$ and $\sigma_{k,1}M_{z,k}\max\{\zeta^\theta_{k,\ell},\zeta^\theta_{k,\ell+1}\}\le\tau_k$}
		\STATE{\textbf{break}}
		\ENDIF
		\ENDFOR
		\STATE{Set \((x^{k+1},y^{k+1},z^{k+1},s^{k+1}):=(x^{k,\ell+1},y^{k,\ell+1},z^{k,\ell+1},s^{k,\ell+1})\), \(\theta^{k+1}:=\theta^{k,\ell+1}\), and \(\zeta^\theta_{k+1,0}:=\zeta^\theta_{k,\ell+1}\).}
		\STATE{Compute \(t^{k+1}\) by \eqref{def_t_slack}. }
		\STATE{If \(\max\{\tau_k,t^{k+1},\|g(x^{k+1},y^{k+1})+s^{k+1}\|\}\le \varepsilon_{\mathrm{stop}}\), set \((\tilde y^{k+1},\tilde z^{k+1},\tilde s^{k+1}):=(y^{k+1},z^{k+1},s^{k+1})\) and stop.}
		\STATE{\alglabel{Penalty update and correction step}}
		\STATE{\textbf{Update penalty parameters and apply feasibility correction:}}
		\STATE{\textbf{Case 1:} If \eqref{penalty_residual_test_slack} holds, set \(\sigma_{k+1}:=\sigma_k\) and \((\tilde y^{k+1},\tilde z^{k+1},\tilde s^{k+1},\tilde\theta^{k+1}):=(y^{k+1},z^{k+1},s^{k+1},\theta^{k+1})\).}
		\STATE{\textbf{Case 2:} If \eqref{penalty_residual_test_slack} fails but \eqref{correction_not_needed_slack} holds, set \(\sigma_{k+1}:=(\sigma_{k,1}+\varrho_\rho,\sigma_{k,2}+\varrho_\rho)\), \((\tilde y^{k+1},\tilde z^{k+1},\tilde s^{k+1},\tilde\theta^{k+1}):=(y^{k+1},z^{k+1},s^{k+1},\theta^{k+1})\).}
		\STATE{\textbf{Case 3:} If both \eqref{penalty_residual_test_slack} and \eqref{correction_not_needed_slack} fail, set \(\sigma_{k+1}:=(\sigma_{k,1}+\varrho_\rho,\sigma_{k,2}+\varrho_\rho)\); compute \((\tilde y^{k+1},\tilde z^{k+1},\tilde\theta^{k+1})\) such that $(x^{k+1},\tilde y^{k+1},\tilde z^{k+1})\in\Omega\times\mathbb R_+^q$, $R(\tilde\theta^{k+1};x^{k+1},\tilde y^{k+1},\tilde z^{k+1})
		\le \zeta^\theta_{k+1,0}$,	and \eqref{inexacty_yz_slack} holds, and construct \(\tilde s^{k+1}\) by the componentwise rule in the feasibility correction procedure. }	
		\STATE{Update \(M_{z,k+1}\) by \eqref{update_Mzk_slack}.}
		\ENDFOR
		\RETURN \((x^{k+1},\tilde y^{k+1},\tilde z^{k+1},\tilde s^{k+1})\).
	\end{algorithmic}
\end{algorithm}

\subsection{Preliminaries for Convergence Analysis}

We use the following constants in this subsection. Since \(\Omega\) is compact under Assumption~\ref{asup:bound}, let \(L_f\) and \(L_g\) be Lipschitz moduli of \(\nabla f\) and \(\nabla g\) on \(\Omega\), respectively, and let \(M_g:=\sup_{(x,y)\in\Omega}\|\nabla g(x,y)\|<+\infty\). Then \(M_g\) is also a Lipschitz modulus of \(g\) on \(\Omega\).

We first record that \(\theta^*(x,y,z)\) and the inexact points satisfying the residual bound \(R(\theta;x,y,z)\le\zeta^\theta\) remain uniformly bounded whenever \(z\) is restricted to a bounded set. 

\begin{lemma}\label{lem:theta-bounded}
	Suppose that Assumptions~\ref{asup_stationarity_lower} and~\ref{asup:bound} hold. Let \(Z\subset\mathbb R_+^q\) be a bounded set. Then, for any \(\zeta^\theta>0\), there exists \(M_\theta>0\) such that, for any \((x,y,z)\in\Omega\times Z\) and any \(\theta\in\mathbb R^m\) satisfying \(R(\theta;x,y,z)\le \zeta^\theta\), we have \(\|\theta\|\le M_\theta\).
\end{lemma}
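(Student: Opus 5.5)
The plan is to split the bound into $\|\theta-\theta^*(x,y,z)\|$ plus $\|\theta^*(x,y,z)\|$. The first term is handled by the error bound in Lemma~\ref{error_bound}, and the second by a compactness argument for the exact proximal point $\theta^*$ over the bounded parameter set $\Omega\times Z$. By Lemma~\ref{error_bound}, any $\theta$ with $R(\theta;x,y,z)\le\zeta^\theta$ satisfies $\|\theta-\theta^*(x,y,z)\|\le\gamma_1\zeta^\theta$. Once a uniform bound $\|\theta^*(x,y,z)\|\le M^*$ over $(x,y,z)\in\Omega\times Z$ is available, we can take $M_\theta:=M^*+\gamma_1\zeta^\theta$.

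To bound $\theta^*$ uniformly, I would use that $y$ itself is a competitor in the strongly convex problem \eqref{def_moreau_envelope}. The $1/\gamma_1$-strong convexity of $h(\theta):=\mathcal L(x,\theta,z)+\|\theta-y\|^2/(2\gamma_1)$ (valid since $x\in X$ and $z\ge0$) gives
\[
\frac{1}{2\gamma_1}\|y-\theta^*\|^2\le h(y)-h(\theta^*)\le \langle\nabla h(y),y-\theta^*\rangle=\langle\nabla_y\mathcal L(x,y,z),y-\theta^*\rangle .
\]
Cauchy--Schwarz then yields $\|\theta^*-y\|\le 2\gamma_1\|\nabla_y f(x,y)+\nabla_y g(x,y)^\top z\|$. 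Here $\nabla_y f$ and $\nabla_y g$ are continuous (Assumption~\ref{asup_stationarity_lower}) on the compact set $\Omega$ (Assumption~\ref{asup:bound}), and $Z$ is bounded, so the right-hand side is bounded by a constant $C$. Together with $\|y\|\le\sup_{\Omega}\|(x,y)\|$, this gives $M^*:=\sup_\Omega\|(x,y)\|+2\gamma_1 C$. An even shorter route to the same constant is to apply Lemma~\ref{error_bound} directly with $\theta=y$, which gives $\|y-\theta^*\|\le\gamma_1 R(y;x,y,z)=\gamma_1\|\nabla_y\mathcal L(x,y,z)\|$; I would use that form.

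The main point needing care is that the first-order identities hold at points $(x,y)\in\Omega$, where $x\in X$. This is so because the projection $(x,y)\in\Omega$ gives $x\in X$, so Assumption~\ref{asup_stationarity_lower} provides convexity of $f(x,\cdot)$ and $g(x,\cdot)$ and the $C^2$ regularity on a neighborhood of $X\times\mathbb R^m$. Beyond this, the argument needs only $Z\subset\mathbb R_+^q$, the boundedness of $Z$, and the compactness of $\Omega$; no further obstacle is expected.
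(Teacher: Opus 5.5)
Your proposal is correct and follows the paper's route: you bound $\|\theta\|\le\|\theta-\theta^*(x,y,z)\|+\|\theta^*(x,y,z)-y\|+\|y\|$, control the first term by $\gamma_1\zeta^\theta$ via Lemma~\ref{error_bound}, and control the second by $\gamma_1\sup_{\Omega\times Z}\|\nabla_y\mathcal L(x,y,z)\|$ using compactness of $\Omega$ and boundedness of $Z$, ending with the paper's constant. The paper gets the middle estimate from strong monotonicity of the gradient at $\theta^*$ and $y$, which is what your shortcut (Lemma~\ref{error_bound} with $\theta=y$) encodes; that shortcut gives the factor $\gamma_1$, sharper than the $2\gamma_1$ from your function-value argument, so the two are not literally ``the same constant,'' but this does not affect the proof.
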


\begin{proof}
	Fix \(\zeta^\theta>0\). For any \((x,y,z)\in\Omega\times Z\), let \(\theta^*=\theta^*(x,y,z)\). By Assumption~\ref{asup_stationarity_lower} and \(z\in\mathbb R_+^q\), the mapping \(\theta\mapsto\mathcal L(x,\theta,z)+\|\theta-y\|^2/(2\gamma_1)\) is \(1/\gamma_1\)-strongly convex. Hence \(\theta^*\) is uniquely defined and satisfies
	\[
		\nabla_y\mathcal L(x,\theta^*,z)+\frac{1}{\gamma_1}(\theta^*-y)=0.
	\]
	Applying the \(1/\gamma_1\)-strong monotonicity of its gradient at \(\theta^*\) and \(y\), we obtain
	\[
		\frac{1}{\gamma_1}\|\theta^*-y\|^2\le-\left\langle\nabla_y\mathcal L(x,y,z),\theta^*-y\right\rangle\le\|\nabla_y\mathcal L(x,y,z)\|\|\theta^*-y\|.
	\]
	Since \(\Omega\times Z\) is bounded and the relevant gradients are continuous, there exist \(M_y,M_{\mathcal L}>0\) such that \(\|y\|\le M_y\) and \(\|\nabla_y\mathcal L(x,y,z)\|\le M_{\mathcal L}\) on \(\Omega\times Z\). Hence \(\|\theta^*(x,y,z)\|\le M_y+\gamma_1M_{\mathcal L}\). If \(R(\theta;x,y,z)\le\zeta^\theta\), Lemma~\ref{error_bound} gives \(\|\theta-\theta^*(x,y,z)\|\le\gamma_1R(\theta;x,y,z)\le\gamma_1\zeta^\theta\). Therefore \(\|\theta\|\le M_y+\gamma_1M_{\mathcal L}+\gamma_1\zeta^\theta\), which proves the assertion.
\end{proof}

The next lemma establishes the Lipschitz continuity of the solution mapping \(z\mapsto\theta^*(x,y,z)\) on bounded subsets of \(\mathbb R_+^q\),
uniformly with respect to \((x,y)\in\Omega\).

\begin{lemma}\label{lem:theta-z-lipschitz}
	Suppose that Assumptions~\ref{asup_stationarity_lower} and~\ref{asup:bound} hold. Let \(Z\subset\mathbb R_+^q\) be a bounded set. Then, there exists $L_{\theta,z} \ge 0$ such that, for any fixed \((x,y)\in\Omega\) and any \(z_1,z_2\in Z\), 
	\[
	\|\theta^*(x,y,z_1)-\theta^*(x,y,z_2)\|
	\le
	L_{\theta,z}  \|z_1-z_2\|.
	\]
\end{lemma}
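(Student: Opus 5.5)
We will use the strong monotonicity argument already employed in Lemma~\ref{error_bound} and Lemma~\ref{lem:theta-bounded}. Fix \((x,y)\in\Omega\) and \(z_1,z_2\in Z\), and write \(\theta_j:=\theta^*(x,y,z_j)\), \(j=1,2\). Each \(\theta_j\) is the unique minimizer of the \(1/\gamma_1\)-strongly convex function \(\theta\mapsto\mathcal L(x,\theta,z_j)+\|\theta-y\|^2/(2\gamma_1)\). It is therefore characterized by
\[
\nabla_y f(x,\theta_j)+\nabla_y g(x,\theta_j)^\top z_j+\frac{1}{\gamma_1}(\theta_j-y)=0 .
\]

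The first step is to evaluate the gradient of the \(z_1\)-problem at \(\theta_2\). Using the optimality condition for \(\theta_2\), we get
\[
\nabla_y\mathcal L(x,\theta_2,z_1)+\frac{1}{\gamma_1}(\theta_2-y)=\nabla_y g(x,\theta_2)^\top(z_1-z_2).
\]
The left-hand side is exactly the residual whose norm is \(R(\theta_2;x,y,z_1)\). Lemma~\ref{error_bound}, applied with \(z=z_1\) and \(\theta=\theta_2\), then yields
\[
\|\theta_2-\theta_1\|\le\gamma_1\,\|\nabla_y g(x,\theta_2)\|\,\|z_1-z_2\|.
\]
Equivalently, one can argue directly: strong monotonicity of \(\theta\mapsto\nabla_y\mathcal L(x,\theta,z_1)+(\theta-y)/\gamma_1\), tested at \(\theta_1\) and \(\theta_2\), followed by Cauchy--Schwarz, gives the same bound.

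The remaining step, and the only one that needs any care, is a uniform bound on \(\|\nabla_y g(x,\theta_2)\|\). Note that \(M_g\) is defined only as a supremum over \(\Omega\), whereas \((x,\theta_2)\) need not lie in \(\Omega\). We handle this with Lemma~\ref{lem:theta-bounded}, applied with residual zero (or any \(\zeta^\theta>0\)). It shows that \(\|\theta^*(x,y,z)\|\le M_\theta\) for all \((x,y,z)\in\Omega\times Z\). Since \(\Omega\) is compact by Assumption~\ref{asup:bound}, the \(x\)-components range over a compact set \(\Omega_x\). The pairs \((x,\theta_2)\) therefore lie in the compact set \(K:=\Omega_x\times M_\theta\mathbb B\).

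To bound the Jacobian on \(K\), we use continuity of \(\nabla g\). Every \(x\) in \(\Omega_x\) belongs to \(X\), so Assumption~\ref{asup_stationarity_lower} guarantees that \(g\) is \(C^2\) on an open set containing \(X\times\mathbb R^m\supset K\). Hence \(\nabla_y g\) is continuous on \(K\), and \(M_K:=\max_{K}\|\nabla_y g\|<\infty\). Setting \(L_{\theta,z}:=\gamma_1M_K\) completes the argument. This constant is independent of \((x,y)\in\Omega\) and of \(z_1,z_2\in Z\), which is the uniformity the statement asks for.
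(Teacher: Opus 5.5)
Your proof is correct and takes the same approach as the paper. The paper applies the $1/\gamma_1$-strong monotonicity of $\theta\mapsto\nabla_y\mathcal L(x,\theta,z_1)+(\theta-y)/\gamma_1$ at $\theta_1,\theta_2$, which is your direct route; citing Lemma~\ref{error_bound} with the residual $\nabla_y g(x,\theta_2)^\top(z_1-z_2)$ gives the same estimate. Like you, it bounds $\|\nabla_y g(x,\theta_2)\|$ uniformly via Lemma~\ref{lem:theta-bounded}, obtaining $L_{\theta,z}=\gamma_1 M_{g,Z}$, and your compactness argument on $\Omega_x\times M_\theta\mathbb B\subset X\times\mathbb R^m$ spells out that bound more carefully than the paper does.
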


\begin{proof}
	By Lemma~\ref{lem:theta-bounded}, the points \(\theta^*(x,y,z)\), \((x,y,z)\in\Omega\times Z\), are bounded. Hence there exists \(M_{g,Z}>0\) such that \(\|\nabla_y g(x,\theta)\|\le M_{g,Z}\) for all relevant \((x,\theta)\). Fix \((x,y)\in\Omega\) and \(z_1,z_2\in Z\), and set \(\theta_i:=\theta^*(x,y,z_i)\), \(i=1,2\). The first-order optimality conditions give
	\[
		\nabla_y\mathcal L(x,\theta_i,z_i)+\frac{1}{\gamma_1}(\theta_i-y)=0,
		\qquad i=1,2.
	\]
	Applying the \(1/\gamma_1\)-strong monotonicity of this gradient at \(\theta_1\) and \(\theta_2\) yields
	\[
	\frac{1}{\gamma_1}\|\theta_1-\theta_2\|^2
	\le
	-\left\langle
	\nabla_y g(x,\theta_2)^\top(z_1-z_2),
	\theta_1-\theta_2
	\right\rangle
	\le
	M_{g,Z}\|z_1-z_2\|\|\theta_1-\theta_2\|.
	\]
	This implies \(\|\theta_1-\theta_2\|\le\gamma_1M_{g,Z}\|z_1-z_2\|\). Hence the assertion holds with \(L_{\theta,z}:=\gamma_1M_{g,Z}\).
\end{proof}

The next lemma gives a separate estimate for how the \((x,y)\)-gradient of the exact regularized gap function changes when only \(z\) is perturbed.

\begin{lemma}\label{lem:Gxy-z-lip}
	Suppose that Assumptions~\ref{asup_stationarity_lower} and~\ref{asup:bound} hold. Let \(Z\subset\mathbb R_+^q\) be a bounded set. Then, there exists \(L_{\nabla_{xy}\mathcal G}^{z}>0\) such that, for any fixed \((\bar x,\bar y)\in\Omega\) and any \(z_1,z_2\in Z\),
	\begin{equation}\label{lem9_eq5}
		\begin{aligned}
			\|\nabla_{x,y}\mathcal G_\gamma(\bar x,\bar y,z_1)
			-\nabla_{x,y}\mathcal G_\gamma(\bar x,\bar y,z_2)\|
			\le L_{\nabla_{xy}\mathcal G}^{z}\|z_1-z_2\|.
		\end{aligned}
	\end{equation}
\end{lemma}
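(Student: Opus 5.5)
The plan is to start from the explicit gradient formula \eqref{Ggradient} in Lemma~\ref{smooth_G} and estimate the difference term by term. Fix \((\bar x,\bar y)\in\Omega\) and \(z_1,z_2\in Z\). Write \(\theta_j:=\theta^*(\bar x,\bar y,z_j)\) and \(\lambda_j:=\lambda^*(\bar x,\bar y,z_j)=\operatorname{Proj}_{\mathbb R_+^q}(z_j+\gamma_2 g(\bar x,\bar y))\) for \(j=1,2\). The \((x,y)\)-part of \eqref{Ggradient} splits into two pieces. The first is
\[
\nabla_{x,y}f(\bar x,\bar y)+\nabla g(\bar x,\bar y)^\top\lambda_j .
\]
The second consists of \(\nabla_x f(\bar x,\theta_j)+\nabla_x g(\bar x,\theta_j)^\top z_j\) in the \(x\)-block and \((\bar y-\theta_j)/\gamma_1\) in the \(y\)-block. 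I will bound the change in each piece separately.

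For the first piece, the difference equals \(\nabla g(\bar x,\bar y)^\top(\lambda_1-\lambda_2)\). The projection onto \(\mathbb R_+^q\) is nonexpansive, so \(\|\lambda_1-\lambda_2\|\le\|z_1-z_2\|\). Since \(\|\nabla g\|\le M_g\) on the compact set \(\Omega\), this piece changes by at most \(M_g\|z_1-z_2\|\).

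For the \(y\)-block of the second piece, the difference is \((\theta_2-\theta_1)/\gamma_1\). Lemma~\ref{lem:theta-z-lipschitz} bounds its norm by \((L_{\theta,z}/\gamma_1)\|z_1-z_2\|\).

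The \(x\)-block is the only step that needs some care; I do not expect a real obstacle there. I will add and subtract \(\nabla_x g(\bar x,\theta_2)^\top z_1\) to get
\[
\bigl[\nabla_x f(\bar x,\theta_1)-\nabla_x f(\bar x,\theta_2)\bigr]+\bigl[\nabla_x g(\bar x,\theta_1)-\nabla_x g(\bar x,\theta_2)\bigr]^\top z_1+\nabla_x g(\bar x,\theta_2)^\top(z_1-z_2).
\]
The difficulty is that the \(\theta_j\) need not lie in \(\Omega\), so the constants \(L_f,L_g,M_g\) do not apply directly. Instead, Lemma~\ref{lem:theta-bounded} (applied with the exact minimizer, which is admissible for any \(\zeta^\theta>0\)) shows that all \(\theta^*(x,y,z)\) with \((x,y,z)\in\Omega\times Z\) lie in a ball \(M_\theta\mathbb B\). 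Let \(K\) be the compact set \(\{(x,\theta)\mid x\in\operatorname{proj}_x\Omega,\ \|\theta\|\le M_\theta\}\), which lies in \(X\times\mathbb R^m\). On \(K\), Assumption~\ref{asup_stationarity_lower} (\(C^2\) data) makes \(\nabla_x f\) and \(\nabla_x g\) Lipschitz in \(\theta\) with some modulus \(L_K\), because the derivatives are continuous on a compact set and \(K\) is convex in \(\theta\). Also \(\|\nabla_x g\|\le M_K\) on \(K\), and \(\|z\|\le M_Z\) for \(z\in Z\).

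Combining these bounds with Lemma~\ref{lem:theta-z-lipschitz}, the \(x\)-block changes by at most
\[
\bigl(L_K(1+M_Z)L_{\theta,z}+M_K\bigr)\|z_1-z_2\|.
\]
Adding the three estimates gives \eqref{lem9_eq5} with
\[
L^{z}_{\nabla_{xy}\mathcal G}:=M_g+\frac{L_{\theta,z}}{\gamma_1}+L_K(1+M_Z)L_{\theta,z}+M_K .
\]
This constant is independent of \((\bar x,\bar y)\) and of \(z_1,z_2\), since every constant used depends only on \(\Omega\), \(Z\) and \(\gamma\).
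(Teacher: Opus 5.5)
Your proposal is correct and follows essentially the same route as the paper: nonexpansiveness of $\operatorname{Proj}_{\mathbb R_+^q}$ handles the $\lambda^*$-terms, Lemma~\ref{lem:theta-z-lipschitz} handles the $\theta^*$-terms, and Lemma~\ref{lem:theta-bounded} confines $\theta^*$ to the compact set $\{(x,\theta)\mid x\in X,\ \|\theta\|\le M_\theta\}$, on which $\nabla_x f$ and $\nabla_x g$ are bounded and Lipschitz; the $x$-block is treated with the same add-and-subtract of $\nabla_x g(\bar x,\theta_2)^\top z_1$. The differences are cosmetic: you bound $\nabla g(\bar x,\bar y)^\top(\lambda_1-\lambda_2)$ jointly in $(x,y)$ rather than blockwise, and you explain why $C^2$ data together with convexity of the $\theta$-ball yields the Lipschitz modulus, which the paper simply asserts.
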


\begin{proof}
	Let \(M_Z:=\sup_{z\in Z}\|z\|<+\infty\). By Lemma~\ref{lem:theta-bounded} applied to \(Z\), there exists \(M_{\theta,Z}>0\) such that \(\|\theta^*(x,y,z)\|\le M_{\theta,Z}\) for all \((x,y,z)\in\Omega\times Z\). Define
	\[
	\widehat\Omega_{\theta,Z}:=
	\left\{(x,\theta)\in\mathbb R^{n+m}\ \middle|\ 
	(x,y)\in\Omega\ \text{for some }y,\ \|\theta\|\le M_{\theta,Z}\right\}.
	\]
	Then \(\widehat\Omega_{\theta,Z}\) is bounded. Let \(M_{g,\theta,Z}\) be a uniform bound on \(\|\nabla g\|\) over \(\widehat\Omega_{\theta,Z}\), and let \(L_{\mathcal L,Z}>0\) be a Lipschitz modulus of \(\nabla_x\mathcal L(\cdot,\cdot,z)\), uniform for \(z\in Z\), on \(\widehat\Omega_{\theta,Z}\).
	
	Fix \((\bar x,\bar y)\in\Omega\) and \(z_1,z_2\in Z\). Set \(\theta_i^*:=\theta^*(\bar x,\bar y,z_i)\) and \(\lambda_i^*:=\lambda^*(\bar x,\bar y,z_i)\), \(i=1,2\). By the projection formula for \(\lambda^*\) and the nonexpansiveness of the projection onto \(\mathbb R_+^q\),
	\[
	\|\lambda_1^*-\lambda_2^*\|
	\le
	\|(z_1+\gamma_2g(\bar x,\bar y))-(z_2+\gamma_2g(\bar x,\bar y))\|
	\le
	\|z_1-z_2\|.
	\]
	Moreover, Lemma~\ref{lem:theta-z-lipschitz} gives the existence of \(L_{\theta,z}\) such that
	$
	\|\theta_1^*-\theta_2^*\|\le L_{\theta,z}\|z_1-z_2\|.
	$
	Using the \(x\)-component of the gradient formula \eqref{Ggradient}, we have
	\[
	\begin{aligned}
		&\quad \nabla_x\mathcal G_\gamma(\bar x,\bar y,z_1)-\nabla_x\mathcal G_\gamma(\bar x,\bar y,z_2)\\
		&=
		\nabla_xg(\bar x,\bar y)^\top(\lambda_1^*-\lambda_2^*)
		-
		\Bigl(
		\nabla_x\mathcal L(\bar x,\theta_1^*,z_1)
		-
		\nabla_x\mathcal L(\bar x,\theta_2^*,z_1)
		\Bigr)
		-
		\nabla_xg(\bar x,\theta_2^*)^\top(z_1-z_2).
	\end{aligned}
	\]
	Combining this identity with \(\|\lambda_1^*-\lambda_2^*\|\le\|z_1-z_2\|\) and \(\|\theta_1^*-\theta_2^*\|\le L_{\theta,z}\|z_1-z_2\|\) gives
	\begin{equation}\label{lem9_x_lip}
		\|\nabla_x\mathcal G_\gamma(\bar x,\bar y,z_1)-\nabla_x\mathcal G_\gamma(\bar x,\bar y,z_2)\|\le	\left(M_g+M_{g,\theta,Z}+L_{\mathcal L,Z}L_{\theta,z}\right)\|z_1-z_2\|.
	\end{equation}
	Similarly, the \(y\)-component of \eqref{Ggradient} gives
	\begin{equation}\label{lem9_y_lip}
		\|\nabla_y\mathcal G_\gamma(\bar x,\bar y,z_1)-\nabla_y\mathcal G_\gamma(\bar x,\bar y,z_2)\|\le M_g \|\lambda_1^*-\lambda_2^*\| + \frac{L_{\theta,z}}{\gamma_1}\|z_1-z_2\|\le \left(M_g+ \frac{L_{\theta,z}}{\gamma_1}\right)\|z_1-z_2\|.
	\end{equation}
	Combining \eqref{lem9_x_lip} and \eqref{lem9_y_lip} yields \eqref{lem9_eq5} for some constant \(L_{\nabla_{xy}\mathcal G}^{z}>0\).
\end{proof}

We now prepare the quadratic upper estimates for the computable function used in the inner loop.

\begin{lemma}\label{lem10}
	Suppose that Assumptions~\ref{asup_stationarity_lower}, \ref{asup:bound}, and~\ref{asup:F_lis_cts} hold. Let $\sigma = (\sigma_1, \sigma_2) > 0$, $\zeta^\theta \ge 0$, and let $Z \subset \mathbb{R}_+^q$ be a bounded set. Then, there exists a constant $L_{\hat\psi}^{xy} > 0$ such that, for all $(x, y), (\bar{x}, \bar{y}) \in \Omega$, all $\bar{z}, \bar{s} \in Z$, and any $\theta \in \mathbb{R}^m$ satisfying $R(\theta; \bar{x}, \bar{y}, \bar{z}) \le \zeta^\theta$, we have
	\begin{equation}\label{lem10_eq1}
		\hat{\psi}_{\sigma}(x, y, \bar{z}, \bar{s}; \theta)
		\le
		\hat{\psi}_{\sigma}(\bar{x}, \bar{y}, \bar{z}, \bar{s}; \theta)
		+
		\left\langle
		\nabla_{x,y} \hat{\psi}_{\sigma}(\bar{x}, \bar{y}, \bar{z}, \bar{s}; \theta), (x, y) - (\bar{x}, \bar{y})
		\right\rangle
		+ \frac{L_{\hat\psi}^{xy}}{2} \|(x, y) - (\bar{x}, \bar{y})\|^2.
	\end{equation}
	Furthermore, there exists a constant $L_{\hat\psi}^{zs} > 0$ such that, for all $(\bar{x}, \bar{y}) \in \Omega$, all $z, s, \bar{z}, \bar{s} \in Z$, and any $\theta \in \mathbb{R}^m$ satisfying $R(\theta; \bar{x}, \bar{y}, \bar{z}) \le \zeta^\theta$, we have
	\begin{equation}\label{lem10_eq2}
		\hat{\psi}_{\sigma}(\bar{x}, \bar{y}, z, s; \theta)
		\le
		\hat{\psi}_{\sigma}(\bar{x}, \bar{y}, \bar{z}, \bar{s}; \theta)
		+
		\left\langle
		\nabla_{z,s} \hat{\psi}_{\sigma}(\bar{x}, \bar{y}, \bar{z}, \bar{s}; \theta), (z, s) - (\bar{z}, \bar{s})
		\right\rangle
		+ \frac{L_{\hat\psi}^{zs}}{2} \|(z, s) - (\bar{z}, \bar{s})\|^2.
	\end{equation}
\end{lemma}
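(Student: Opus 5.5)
The plan is to split $\hat\psi_\sigma$ into its summands and prove the descent-lemma-type bound term by term. Two facts drive everything. First, $\Omega\times Z$ is compact. Second, $\theta$ stays in a fixed bounded set: since $R(\theta;\bar x,\bar y,\bar z)\le\zeta^\theta$, Lemma~\ref{lem:theta-bounded} gives $\|\theta\|\le M_\theta$ uniformly over $(\bar x,\bar y,\bar z)\in\Omega\times Z$. This matters because $\theta$ is held fixed while $(x,y)$ or $(z,s)$ vary, so $\theta$ enters only as a bounded parameter.

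For \eqref{lem10_eq1} we fix $\theta,\bar z,\bar s$ and view $\hat\psi_\sigma$ as a function of $(x,y)$. It is a sum of five pieces:
\begin{itemize}
\item $F/\sigma_1$, whose gradient is $L_F/\sigma_1$-Lipschitz by Assumption~\ref{asup:F_lis_cts};
\item $f(x,y)$, whose gradient is Lipschitz with modulus $L_f$ on $\Omega$;
\item $\mathcal P_{\gamma_2}(x,y,\bar z)$, with $\nabla_{x,y}\mathcal P_{\gamma_2}=\nabla g(x,y)^\top[\bar z+\gamma_2 g(x,y)]_+$;
\item $-f(x,\theta)-\bar z^\top g(x,\theta)-\|\theta-y\|^2/(2\gamma_1)$;
\item $\frac{\sigma_2}{2}\|g(x,y)+\bar s\|^2$, with gradient $\sigma_2\nabla g^\top(g+\bar s)$.
\end{itemize}

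For the third piece, the map $t\mapsto[t]_+$ is nonexpansive and $g$ is $M_g$-Lipschitz, so the product $\nabla g(x,y)^\top[\bar z+\gamma_2 g(x,y)]_+$ is Lipschitz. Its modulus is $L_g(M_Z+\gamma_2\sup_\Omega\|g\|)+M_g\gamma_2M_g$, where $M_Z$ bounds $Z$.

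For the fourth piece, $(x,\theta)$ ranges over the bounded set $\widehat\Omega_{\theta,Z}$ from the proof of Lemma~\ref{lem:Gxy-z-lip}, with $M_\theta$ in place of $M_{\theta,Z}$. Since $f,g\in C^2$, the functions $\nabla_x f(\cdot,\theta)$ and $\nabla_x g(\cdot,\theta)$ are Lipschitz in $x$ uniformly in $\theta$ on that set. The $y$-part of this piece is quadratic with Hessian $-I/\gamma_1$.

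The fifth piece is handled like the third, using the bound on $\bar s\in Z$.

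One technical point remains. $\Omega$ is convex by Assumption~\ref{asup:bound}, but the projection of $\Omega$ onto the $x$-coordinates together with the ball $\|\theta\|\le M_\theta$ should be taken convex. We will use its convex hull, which is still compact, so that the segment between any two points of $\Omega$ stays in the region where the Lipschitz moduli apply. Summing the moduli gives a Lipschitz constant $L_{\hat\psi}^{xy}$ for $\nabla_{x,y}\hat\psi_\sigma(\cdot,\cdot,\bar z,\bar s;\theta)$ on $\Omega$. The standard descent lemma, i.e.\ integrating the gradient along the segment from $(\bar x,\bar y)$ to $(x,y)$ inside the convex set $\Omega$, then yields \eqref{lem10_eq1}.

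For \eqref{lem10_eq2} we fix $(\bar x,\bar y,\theta)$ and vary $(z,s)\in Z\times Z$. Here the function splits cleanly into
\[
\frac{1}{2\gamma_2}\Bigl(\|[z+\gamma_2 g(\bar x,\bar y)]_+\|^2-\|z\|^2\Bigr)-z^\top g(\bar x,\theta)+\frac{\sigma_2}{2}\|g(\bar x,\bar y)+s\|^2
\]
plus terms independent of $(z,s)$.
\begin{itemize}
\item The $z$-gradient of the first part is $\bigl([z+\gamma_2g]_+-z\bigr)/\gamma_2$, which is $2/\gamma_2$-Lipschitz.
\item The term $-z^\top g(\bar x,\theta)$ is linear in $z$ and contributes nothing to the modulus.
\item The $s$-part is exactly quadratic, with modulus $\sigma_2$.
\end{itemize}
The descent lemma applies on all of $\mathbb R^{2q}$, so convexity of $Z$ is not even needed. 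We can take $L_{\hat\psi}^{zs}=\max\{2/\gamma_2,\sigma_2\}$.

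The main obstacle is the uniformity in $\theta$ in the $(x,y)$ estimate. The Lipschitz constants of $\nabla_x f(x,\theta)$ and $\nabla_x g(x,\theta)$ must not depend on the particular $\theta$, which is why the bounded-$\theta$ lemma and a compact convex enclosing set are invoked first. Note also that the hypothesis $R(\theta;\bar x,\bar y,\bar z)\le\zeta^\theta$ is used only to bound $\|\theta\|$.
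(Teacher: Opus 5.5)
Your proof is correct. For \eqref{lem10_eq1} it follows the paper's proof: you bound $\|\theta\|$ via Lemma~\ref{lem:theta-bounded}, obtain a uniform Lipschitz modulus for $\nabla_{x,y}\hat\psi_\sigma(\cdot,\cdot,\bar z,\bar s;\theta)$ term by term, and apply the descent lemma on the convex set $\Omega$. Your convex-hull precaution is unnecessary. $X$ is the projection of the convex set $\Omega$, so it is already convex, and $X\times\{\|\theta\|\le M_\theta\}$ is convex and compact as it stands.

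For \eqref{lem10_eq2} you take a different route. You use that $z\mapsto\nabla_z\mathcal P_{\gamma_2}(\bar x,\bar y,z)=\bigl([z+\gamma_2 g(\bar x,\bar y)]_+-z\bigr)/\gamma_2$ is globally Lipschitz, which gives $L_{\hat\psi}^{zs}=\max\{2/\gamma_2,\sigma_2\}$. In fact $1/\gamma_2$ already suffices, since $t\mapsto[t+c]_+-t=\max\{c,-t\}$ is $1$-Lipschitz.

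The paper instead writes $\mathcal P_{\gamma_2}(\bar x,\bar y,z)$ as a maximum over $\lambda\in\mathbb R_+^q$ of functions jointly concave in $(\lambda,z)$. It concludes that $z\mapsto\mathcal P_{\gamma_2}(\bar x,\bar y,z)$ is concave, so the $z$-part admits a purely linear upper bound. Only the quadratic $s$-term contributes, giving $L_{\hat\psi}^{zs}=\sigma_2$.

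The paper's route exploits the PHR/dual structure and yields a constant depending only on $\sigma_2$. That constant is what is quoted later in the proof of Theorem~\ref{thm:slack_algorithm_approximate_kkt} to obtain $1/\beta_\ell\le C_{zs}\sigma_{k,2}$. Your more elementary constant works equally well there. Since $\sigma_{k,2}\ge\sigma_{0,2}>0$, one has $\max\{2/\gamma_2,\sigma_{k,2}\}\le\bigl(1+2/(\gamma_2\sigma_{0,2})\bigr)\sigma_{k,2}$, so nothing downstream changes beyond the value of $C_{zs}$.
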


\begin{proof}
	By Lemma~\ref{lem:theta-bounded}, all points \(\theta\) satisfying the prescribed residual bound lie in a bounded set, uniformly over \((\bar x,\bar y,\bar z)\in\Omega\times Z\). We show that, uniformly over all such \(\bar z,\bar s\in Z\) and \(\theta\), the mapping \((x,y)\mapsto\hat\psi_\sigma(x,y,\bar z,\bar s;\theta)\) has a Lipschitz continuous gradient on \(\Omega\).
	Recall that
	\[
		\hat{\psi}_{\sigma}(x,y, z, s;\theta)
	=
	\frac{1}{\sigma_1}F(x,y)
	+
	f(x,y)
	+
	\mathcal{P}_{\gamma_2}(x,y, z)-\mathcal L(x,\theta,z)-
	\frac{1}{2\gamma_1}\|\theta-y\|^2
		+
		\frac{\sigma_2}{2}\|g(x,y)+ s\|^2.
	\]
	
	We first consider \(\mathcal P_{\gamma_2}\). Let \(M_\lambda:=\sup_{\bar z\in Z,(x,y)\in\Omega}\|(\bar z+\gamma_2g(x,y))_+\|<+\infty\). For any \((x_i,y_i)\in\Omega\), set \(\lambda_i:=(\bar z+\gamma_2g(x_i,y_i))_+\), \(i=1,2\). The nonexpansiveness of the projection onto \(\mathbb R_+^q\) gives
	\[
	\|\lambda_1 - \lambda_2\| \le \gamma_2 \|g(x_1,y_1) - g(x_2,y_2)\| \le \gamma_2 M_g \|(x_1,y_1) - (x_2,y_2)\|.
	\]
	Using the chain rule, the gradient is given by $
	\nabla_{x,y}\mathcal{P}_{\gamma_2}(x,y,\bar{z}) = \nabla g(x,y)^\top(\bar{z} + \gamma_2 g(x,y))_+.
	$
	Consequently,
	\[
	\begin{aligned}
		\left\| \nabla_{x,y}\mathcal{P}_{\gamma_2}(x_1,y_1,\bar{z}) - \nabla_{x,y}\mathcal{P}_{\gamma_2}(x_2,y_2,\bar{z}) \right\| 
		&\le \|\nabla g(x_1,y_1) - \nabla g(x_2,y_2)\| \|\lambda_1\| + \|\nabla g(x_2,y_2)\| \|\lambda_1 - \lambda_2\| \\
		&\le \left(L_g M_\lambda + \gamma_2 M_g^2\right) \|(x_1,y_1) - (x_2,y_2)\|.
	\end{aligned}
	\]
	Next, let \(L_{\mathcal L}^x>0\) be a uniform Lipschitz modulus of \(x\mapsto\nabla_x\mathcal L(x,\theta,\bar z)\) for admissible \(\theta\) and \(\bar z\in Z\). The function \(-\|\theta-y\|^2/(2\gamma_1)\) has a Lipschitz continuous gradient with respect to $y$, with modulus $1/\gamma_1$. 
	
	For the slack penalty, let \(M_Z:=\sup_{s\in Z}\|s\|\) and \(M_0:=\sup_{(x,y)\in\Omega}\|g(x,y)\|\). The same estimate as above, with \((\bar z+\gamma_2g(x,y))_+\) replaced by \(g(x,y)+\bar s\), gives
	\[
	\begin{aligned}
		&\quad \left\| \nabla_{x,y}\left(\frac{1}{2}\|g(x_1,y_1) + \bar{s}\|^2\right) - \nabla_{x,y}\left(\frac{1}{2}\|g(x_2,y_2) + \bar{s}\|^2\right) \right\|\\
		&\le \left(L_g(M_0 + M_Z) + M_g^2\right) \|(x_1,y_1) - (x_2,y_2)\|.
	\end{aligned}
	\]
	Combining the preceding estimates, for fixed \(\sigma=(\sigma_1,\sigma_2)\) we may take
	\begin{equation}\label{def_L^{xy}}
		L_{\hat\psi}^{xy}:=\frac{L_F}{\sigma_1}+L_f+
		\left(L_gM_\lambda+\gamma_2M_g^2\right)
		+
		L_{\mathcal L}^{x}
		+
		\frac{1}{\gamma_1}+
		\sigma_2\left(L_g(M_0+M_Z)+M_g^2\right).
	\end{equation}
	The descent lemma \cite[Lemma~5.7]{beck2017first} then yields \eqref{lem10_eq1} with Lipschitz modulus \(L_{\hat\psi}^{xy}\).

	We next prove \eqref{lem10_eq2}. For fixed \((\bar x,\bar y)\), the terms depending on \((z,s)\) are \(\mathcal P_{\gamma_2}(\bar x,\bar y,z)-z^\top g(\bar x,\theta)+\sigma_2\|g(\bar x,\bar y)+s\|^2/2\). Using the maximization representation
	\[
	\mathcal{P}_{\gamma_2}(\bar{x}, \bar{y}, z)
	=
	\max_{\lambda \in \mathbb{R}_+^q}
	\left\{
	\lambda^\top g(\bar{x}, \bar{y})
	-
	\frac{1}{2\gamma_2}\|\lambda - z\|^2
	\right\},
	\]
	since $-\mathcal{P}_{\gamma_2}$ is the partial minimum over $\lambda \in \mathbb{R}_+^q$ of a jointly convex function in $(\lambda, z)$, the function $z \mapsto \mathcal{P}_{\gamma_2}(\bar{x}, \bar{y}, z)$ is concave on $\mathbb{R}_+^q$. Consequently, the first-order concavity condition yields
	\[
	\mathcal{P}_{\gamma_2}(\bar{x}, \bar{y}, z) - z^\top g(\bar{x}, \theta)
	\le  \mathcal{P}_{\gamma_2}(\bar{x}, \bar{y}, \bar{z}) - \bar{z}^\top g(\bar{x}, \theta)+ \left\langle \nabla_z\mathcal{P}_{\gamma_2}(\bar{x}, \bar{y}, \bar{z}) - g(\bar{x}, \theta), z - \bar{z} \right\rangle.
	\]
	Next, the $s$-dependent term is a simple quadratic function, which satisfies the exact identity
	\[
	\frac{\sigma_2}{2}\|g(\bar{x}, \bar{y}) + s\|^2 - \frac{\sigma_2}{2}\|g(\bar{x}, \bar{y}) + \bar{s}\|^2
	= \sigma_2 \left\langle g(\bar{x}, \bar{y}) + \bar{s}, s - \bar{s} \right\rangle + \frac{\sigma_2}{2}\|s - \bar{s}\|^2.
	\]
	
	Combining the preceding two estimates and applying the definition of $\nabla_{z,s}\hat{\psi}_{\sigma}$, we obtain
	\[
	\hat{\psi}_{\sigma}(\bar{x}, \bar{y}, z, s; \theta) - \hat{\psi}_{\sigma}(\bar{x}, \bar{y}, \bar{z}, \bar{s}; \theta)
	\le \left\langle \nabla_{z,s}\hat{\psi}_{\sigma}(\bar{x}, \bar{y}, \bar{z}, \bar{s}; \theta), (z, s) - (\bar{z}, \bar{s}) \right\rangle  + \frac{\sigma_2}{2}\|s - \bar{s}\|^2.
	\]
	Then, the desired estimate \eqref{lem10_eq2} follows by choosing constant $L_{\hat\psi}^{zs}=\sigma_2$. 
\end{proof}

\subsection{Convergence Analysis of the Inner Loop}
In this subsection, we analyze the inner loop performed during a fixed outer iteration. Let the outer index $k$ be fixed. For notational simplicity, throughout this subsection, we write $(x^\ell,y^\ell,z^\ell,s^\ell):=(x^{k,\ell},y^{k,\ell},z^{k,\ell},s^{k,\ell})$.

The preceding local estimates imply finite termination of line searches and the following descent estimate.

\begin{lemma}\label{lem_descent}
	Suppose that Assumptions~\ref{asup_stationarity_lower}, \ref{asup:bound}, and~\ref{asup:F_lis_cts} hold. Fix \(k\ge0\), and let\linebreak[4] \(\{(x^{k,\ell},y^{k,\ell},z^{k,\ell},s^{k,\ell},\theta^{k,\ell})\}_{\ell\ge0}\), together with the intermediate points \(\{\theta^{k,\ell+1/2}\}_{\ell\ge0}\), be generated by the inner loop of Algorithm~\ref{alg2} with fixed parameters \(\sigma_k\) and \(M_{z,k}\). Assume further that \(\sum_{\ell=0}^{\infty} (\zeta^\theta_{k,\ell})^2 < \infty\). Then, the line search procedures for determining the step sizes $\alpha_\ell$ and $\beta_\ell$ are well-defined; that is, the inequalities \eqref{linesearch_x} and \eqref{linesearch_zs} are satisfied for the finite indices $j=j_\ell^{xy}$ and $j=j_\ell^{zs}$, respectively. Moreover, for every inner iteration $\ell$,
	\begin{equation}\label{eq:psi_descent_combined}
		\begin{aligned}
			&\quad\psi_{\sigma_k}(x^{\ell+1},y^{\ell+1},z^{\ell+1},s^{\ell+1}) + c_\alpha \|(x^{\ell+1},y^{\ell+1})-(x^\ell,y^\ell)\|^2 + c_\beta \|(z^{\ell+1},s^{\ell+1})-(z^\ell,s^\ell)\|^2  \\
			&\le \psi_{\sigma_k}(x^\ell,y^\ell,z^\ell,s^\ell) + 2 \gamma_1 (\zeta^\theta_{k,\ell})^2.
		\end{aligned}
	\end{equation}
\end{lemma}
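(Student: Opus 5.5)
The plan is to prove the lemma in three steps: a uniform-boundedness setup, finite termination of the two backtracking searches, and a chaining argument that converts the two computable descent inequalities into the stated descent of the exact merit function $\psi_{\sigma_k}$.

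\textbf{Setup and finite termination.} For fixed $k$, all inner iterates satisfy $(x^\ell,y^\ell)\in\Omega$ and $(z^\ell,s^\ell)\in\mathcal C_k$. Hence they lie in the bounded set $Z:=[0,M_{z,k}]^q$. The tolerances $\zeta^\theta_{k,\ell}$ are square summable, so they are bounded by some $\bar\zeta$. Lemma~\ref{lem:theta-bounded} then keeps every $\theta^\ell,\theta^{\ell+1/2}$ in a fixed bounded set, and Lemma~\ref{lem10} applies with constants $L_{\hat\psi}^{xy}$ and $L_{\hat\psi}^{zs}=\sigma_{k,2}$ that are uniform in $\ell$.

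For the $(x,y)$-search, I would use the standard projected-gradient argument. Projection optimality onto the convex set $\Omega$ gives
\[
\langle d^\ell_{xy},p-(x^\ell,y^\ell)\rangle\le-\tfrac1{\alpha}\|p-(x^\ell,y^\ell)\|^2 .
\]
Combined with \eqref{lem10_eq1}, this yields decrease of $\hat\psi$ by $(1/\alpha-L_{\hat\psi}^{xy}/2)\|p-(x^\ell,y^\ell)\|^2$. Once $\widehat L^{xy,j}_{k,\ell}\ge L_{\hat\psi}^{xy}$, this is at least $c_\alpha\|\cdot\|^2$. The remaining task is the extra term $\frac{\gamma_1}{2}R(\theta^\ell;x^{\ell+1,j},y^{\ell+1,j},z^\ell)^2$ on the left of \eqref{linesearch_x}. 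Since $R$ is Lipschitz in $(x,y)$ uniformly on bounded sets (only the term $-y/\gamma_1$ and $\nabla_y\mathcal L$ depend on $(x,y)$), we get
\[
R(\cdot)^2\le 2(\zeta^\theta_{k,\ell})^2+2L_R^2\|p-(x^\ell,y^\ell)\|^2 .
\]
The first part is absorbed by the budget $\gamma_1(\zeta^\theta_{k,\ell})^2$, and the second by enlarging $\widehat L$ further, i.e.\ by requiring $\widehat L^{xy,j}\ge L_{\hat\psi}^{xy}+2\gamma_1L_R^2$. Since $\kappa_L>1$, this threshold is reached after finitely many $j$.

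The $(z,s)$-search is handled the same way, using \eqref{lem10_eq2}. The set $\mathcal C_k$ is nonconvex, but the projection is still a global minimizer of distance. Comparing with $(z^\ell,s^\ell)\in\mathcal C_k$ gives
\[
\langle d,p-u^\ell\rangle\le-\tfrac1{2\beta}\|p-u^\ell\|^2 ,
\]
which suffices. The residual term involves $R(\theta^{\ell+1/2};\cdot,z)$. It is controlled via \eqref{theta_slack_half} together with Lipschitzness of $R$ in $z$, with constant bounded by $\sup\|\nabla_y g\|$.

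\textbf{Chaining into exact $\psi$.} This is the step I expect to be the main obstacle. The key identity, from \eqref{vg_eq2} and Lemma~\ref{error_bound}, is that for every $\theta$
\[
\psi_{\sigma}(u)\le\hat\psi_\sigma(u;\theta)+\tfrac{\gamma_1}{2}R(\theta;x,y,z)^2,
\qquad
\hat\psi_\sigma(u;\theta)\le\psi_\sigma(u),
\]
where the second inequality holds because $M^{\gamma_1}_{\mathcal L}$ is a minimum over $\theta$.

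The chaining then runs in four moves:
\begin{enumerate}
\item Start from $\psi(u^\ell)\ge\hat\psi(u^\ell;\theta^\ell)$.
\item Apply \eqref{linesearch_x} to pass to $(x^{\ell+1},y^{\ell+1},z^\ell,s^\ell)$. Its left side dominates $\psi$ at that point, hence dominates $\hat\psi(\,\cdot\,;\theta^{\ell+1/2})$ there.
\item Apply \eqref{linesearch_zs} starting from this $\theta^{\ell+1/2}$-value.
\item Bound $\psi(u^{\ell+1})$ by the left side of \eqref{linesearch_zs}.
\end{enumerate}
Summing the two steps gives decrease $c_\alpha\|\Delta_{xy}\|^2+c_\beta\|\Delta_{zs}\|^2$, with error $\gamma_1(\zeta^\theta_{k,\ell})^2$ incurred twice, i.e.\ $2\gamma_1(\zeta^\theta_{k,\ell})^2$. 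This is exactly \eqref{eq:psi_descent_combined}. The care needed is to make sure each inequality uses the same point and the correct $\theta$. This is precisely why the line-search conditions include the $R^2$ term on the new point: it makes the upper bound on $\psi$ at the new point available without evaluating $\theta^*$.
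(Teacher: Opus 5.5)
Your proposal is correct and follows essentially the same route as the paper. Both arguments use uniform bounds from Lemma~\ref{lem:theta-bounded} and Lemma~\ref{lem10}, the projection inequalities (with the factor $1/(2\beta)$ for the nonconvex $\mathcal C_k$), and a Lipschitz bound on $R$ so that the backtracking terminates. Both then chain through $\psi_{\sigma_k}$ at the intermediate point $(x^{\ell+1},y^{\ell+1},z^\ell,s^\ell)$ via $\hat\psi_{\sigma_k}\le\psi_{\sigma_k}\le\hat\psi_{\sigma_k}+\frac{\gamma_1}{2}R^2$. The only difference is that your $(x,y)$ threshold $L_{\hat\psi}^{xy}+2\gamma_1L_R^2$ is more conservative than the paper's $L_{\hat\psi}^{xy}/2+\gamma_1(L_R^{xy})^2$, which does not affect the argument.
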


\begin{proof}
	Fix an inner iteration \(\ell\) and a candidate backtracking index \(j\). To simplify notation in the line-search analysis, we temporarily write the trial points \((x^{\ell+1,j},y^{\ell+1,j})\) and \((z^{\ell+1,j},s^{\ell+1,j})\) as \((x^{\ell+1},y^{\ell+1})\) and \((z^{\ell+1},s^{\ell+1})\), respectively. These points belong to \(\Omega\times\mathcal C_k\). Since $R(\theta^\ell;x^\ell,y^\ell,z^\ell)\le \zeta^\theta_{k,\ell}$, $R(\theta^{\ell+1/2};x^{\ell+1},y^{\ell+1},z^\ell)\le \zeta^\theta_{k,\ell}$, and \(\sup_{\ell\ge0}\zeta^\theta_{k,\ell}<+\infty\), Lemma~\ref{lem:theta-bounded} gives a bound, independent of \(\ell\) and the trial step sizes, for \(\theta^\ell\) and \(\theta^{\ell+1/2}\). Hence, Lemma~\ref{lem10}, applied with \(\sigma=\sigma_k\) and $
	\{ z\in\mathbb R_+^q \mid 0\le z_i\le M_{z,k},\ i=1,\ldots,q\}$, provides constants \(L_{\hat\psi}^{xy},L_{\hat\psi}^{zs}>0\), independent of \(\ell\) and the trial step sizes.
	
	For the \((x,y)\)-update, Lemma~\ref{lem10} and the definition of \(d_{xy}^\ell\) in \eqref{dxy_new} give 
	\begin{equation}\label{eq:xy_hatpsi_upper}
		\begin{aligned}
			&\quad\hat\psi_{\sigma_k}
			(x^{\ell+1},y^{\ell+1},z^\ell,s^\ell;\theta^\ell)
			-
			\hat\psi_{\sigma_k}
			(x^\ell,y^\ell,z^\ell,s^\ell;\theta^\ell)
			\\
			&\le
			\left\langle
			d_{xy}^\ell,
			(x^{\ell+1},y^{\ell+1})-(x^\ell,y^\ell)
			\right\rangle
			+
			\frac{L_{\hat\psi}^{xy}}{2}
			\left\|
			(x^{\ell+1},y^{\ell+1})-(x^\ell,y^\ell)
			\right\|^2.
		\end{aligned}
	\end{equation}
	The projection formula \eqref{update_xy_slack_text} and the convexity of $\Omega$ imply the inequality
	\begin{equation}\label{xy_descent_prelim}
		\left\langle d_{xy}^\ell, (x^{\ell+1},y^{\ell+1})-(x^\ell,y^\ell) \right\rangle + \frac{1}{\alpha_\ell} \|(x^{\ell+1},y^{\ell+1})-(x^\ell,y^\ell)\|^2 \le 0.
	\end{equation}
	
	Combining \eqref{eq:xy_hatpsi_upper} with \eqref{xy_descent_prelim}, we obtain
	\begin{equation}\label{xy_combined_descent}
		\hat{\psi}_{\sigma_k}(x^{\ell+1},y^{\ell+1},z^\ell,s^\ell;\theta^\ell) - \hat{\psi}_{\sigma_k}(x^\ell,y^\ell,z^\ell,s^\ell;\theta^\ell)
		\le - \left( \frac{1}{\alpha_\ell} - \frac{L_{\hat\psi}^{xy}}{2} \right) \|(x^{\ell+1},y^{\ell+1})-(x^\ell,y^\ell)\|^2.
	\end{equation}
	
	Since \(\Omega\) is compact and \(\{z^\ell\}\) and \(\{\theta^\ell\}\) are bounded, the twice continuous differentiability of \(f\) and \(g_i\), ($i=1,\ldots,q$), implies that, for some \(L_R^{xy}>0\) independent of \(\ell\), the mapping $(x,y) \mapsto \nabla_y\mathcal L(x,\theta,z) + (\theta-y)/\gamma_1$ is uniformly Lipschitz continuous on \(\Omega\) over all admissible \((\theta,z)\). Using the definition of $R$, the reverse triangle inequality $\big|\|a\|-\|b\|\big| \le \|a-b\|$, and the Lipschitz estimate above, we obtain
	\begin{equation}\label{eq:theta_residual_step_bound}
		\begin{aligned}
			\frac{\gamma_1}{2}R(\theta^\ell;x^{\ell+1},y^{\ell+1},z^\ell)^2
			&\le
			\frac{\gamma_1}{2}
			\Bigl(
			R(\theta^\ell;x^\ell,y^\ell,z^\ell)
			+
			L_R^{xy}\|(x^{\ell+1},y^{\ell+1})-(x^\ell,y^\ell)\|
			\Bigr)^2  \\
			&\le
			\gamma_1R(\theta^\ell;x^\ell,y^\ell,z^\ell)^2
			+
			\gamma_1(L_R^{xy})^2
			\|(x^{\ell+1},y^{\ell+1})-(x^\ell,y^\ell)\|^2 ,
		\end{aligned}
	\end{equation}
	where the second inequality follows from $(a+b)^2\le 2a^2+2b^2$. Adding \eqref{xy_combined_descent} to \eqref{eq:theta_residual_step_bound}, and since $R(\theta^\ell; x^\ell, y^\ell, z^\ell) \le \zeta^\theta_{k,\ell}$, we have
	\begin{equation}\label{eq:xy_linesearch_sufficient_bound}
		\begin{aligned}
			&\quad\hat{\psi}_{\sigma_k}(x^{\ell+1},y^{\ell+1},z^\ell,s^\ell;\theta^\ell) + \frac{\gamma_1}{2} R(\theta^\ell;x^{\ell+1},y^{\ell+1},z^\ell)^2 \\
			&\le \hat{\psi}_{\sigma_k}(x^\ell,y^\ell,z^\ell,s^\ell;\theta^\ell) - \Big( \frac{1}{\alpha_\ell} - \frac{L_{\hat\psi}^{xy}}{2} - \gamma_1(L_R^{xy})^2 \Big) \|(x^{\ell+1},y^{\ell+1})-(x^\ell,y^\ell)\|^2 + \gamma_1 (\zeta^\theta_{k,\ell})^2.
		\end{aligned}
	\end{equation}
	
	Let \(L_\alpha^*:=L_{\hat\psi}^{xy}/2+\gamma_1(L_R^{xy})^2\). If \(\widehat L_{k,\ell}^{xy,j}\ge L_\alpha^*\), then \(1/\alpha_\ell^j-L_\alpha^*=\widehat L_{k,\ell}^{xy,j}+c_\alpha-L_\alpha^*\ge c_\alpha\), so \eqref{eq:xy_linesearch_sufficient_bound} implies \eqref{linesearch_x}. Since \(\widehat L_{k,\ell}^{xy,j}=\widehat L_{k,\ell}^{xy,0}\kappa_L^j\) with \(\kappa_L>1\), the \((x,y)\)-line search terminates finitely. For the accepted step, \eqref{linesearch_x}, Lemma~\ref{error_bound}, and \(\hat{\psi}_{\sigma_k}(x^\ell,y^\ell,z^\ell,s^\ell;\theta^\ell)\le\psi_{\sigma_k}(x^\ell,y^\ell,z^\ell,s^\ell)\) yield
	\begin{equation}\label{eq:xy_psi_descent_accepted}
		\psi_{\sigma_k}(x^{\ell+1},y^{\ell+1},z^\ell,s^\ell) \le \psi_{\sigma_k}(x^\ell,y^\ell,z^\ell,s^\ell) - c_\alpha \|(x^{\ell+1},y^{\ell+1})-(x^\ell,y^\ell)\|^2 + \gamma_1 (\zeta^\theta_{k,\ell})^2.
	\end{equation}
	
	For the \((z,s)\)-update, a similar argument applies. Since the current point \((z^\ell,s^\ell)\) belongs to \(\mathcal C_k\) and the trial point is a selected global Euclidean projection, the projection inequality gives
	\begin{equation}\label{zs_projection_descent_prelim}
		\left\langle d_{zs}^\ell,
		(z^{\ell+1},s^{\ell+1})-(z^\ell,s^\ell)\right\rangle
		+
		\frac{1}{2\beta_\ell}
		\|(z^{\ell+1},s^{\ell+1})-(z^\ell,s^\ell)\|^2
		\le0.
	\end{equation}
	Combining \eqref{zs_projection_descent_prelim} with Lemma~\ref{lem10}, \eqref{d_zs}, and \eqref{update_zs_full}, we obtain
	\begin{equation}\label{eq:zs_hatpsi_descent}
		\begin{aligned}
			&\quad \hat{\psi}_{\sigma_k}(x^{\ell+1},y^{\ell+1},z^{\ell+1},s^{\ell+1};\theta^{\ell+1/2})\\
			&\le \hat{\psi}_{\sigma_k}(x^{\ell+1},y^{\ell+1},z^\ell,s^\ell;\theta^{\ell+1/2})-\left(\frac{1}{2\beta_\ell}-\frac{L_{\hat\psi}^{zs}}{2}\right)\|(z^{\ell+1},s^{\ell+1})-(z^\ell,s^\ell)\|^2.
		\end{aligned}
	\end{equation}
	Moreover, since $\nabla_y\mathcal L(x^{\ell+1},\theta^{\ell+1/2},z^{\ell+1})-\nabla_y\mathcal L(x^{\ell+1},\theta^{\ell+1/2},z^\ell)
	=\nabla_y g(x^{\ell+1},\theta^{\ell+1/2})^\top(z^{\ell+1}-z^\ell)$, the boundedness established above gives \(L_R^z>0\), independent of \(\ell\), such that
	\begin{equation}\label{eq:zs_residual_trial_bound}
		\frac{\gamma_1}{2}R(\theta^{\ell+1/2};x^{\ell+1},y^{\ell+1},z^{\ell+1})^2
		\le\gamma_1(\zeta^\theta_{k,\ell})^2+\gamma_1(L_R^z)^2\|(z^{\ell+1},s^{\ell+1})-(z^\ell,s^\ell)\|^2.
	\end{equation}
	Combining \eqref{eq:zs_hatpsi_descent} and \eqref{eq:zs_residual_trial_bound} yields
	\begin{equation}\label{eq:zs_linesearch_sufficient_bound}
		\begin{aligned}
			&\quad \hat{\psi}_{\sigma_k}(x^{\ell+1},y^{\ell+1},z^{\ell+1},s^{\ell+1};\theta^{\ell+1/2})
			+\frac{\gamma_1}{2}R(\theta^{\ell+1/2};x^{\ell+1},y^{\ell+1},z^{\ell+1})^2\\
			&\le\hat{\psi}_{\sigma_k}(x^{\ell+1},y^{\ell+1},z^\ell,s^\ell;\theta^{\ell+1/2})
			-\left(\frac{1}{2\beta_\ell}-\frac{L_{\hat\psi}^{zs}}{2}-\gamma_1(L_R^z)^2\right)
			\|(z^{\ell+1},s^{\ell+1})-(z^\ell,s^\ell)\|^2+\gamma_1(\zeta^\theta_{k,\ell})^2.
		\end{aligned}
	\end{equation}
	Let \(L_\beta^*:=L_{\hat\psi}^{zs}+2\gamma_1(L_R^z)^2+c_\beta\). By \eqref{eq:zs_linesearch_sufficient_bound}, \eqref{linesearch_zs} holds whenever \(\widehat L_{k,\ell}^{zs,j}\ge L_\beta^*\); hence, the \((z,s)\)-line search terminates finitely. For the accepted step, \eqref{linesearch_zs}, \(\hat{\psi}_{\sigma_k}(\cdot;\theta^{\ell+1/2})\le\psi_{\sigma_k}\), and Lemma~\ref{error_bound} yield
	\begin{equation}\label{eq:zs_psi_descent_accepted}
		\psi_{\sigma_k}(x^{\ell+1},y^{\ell+1},z^{\ell+1},s^{\ell+1})
		\le
		\psi_{\sigma_k}(x^{\ell+1},y^{\ell+1},z^\ell,s^\ell)
		-
		c_\beta\|(z^{\ell+1},s^{\ell+1})-(z^\ell,s^\ell)\|^2
		+
		\gamma_1(\zeta^\theta_{k,\ell})^2 .
	\end{equation}
	Adding \eqref{eq:xy_psi_descent_accepted} and \eqref{eq:zs_psi_descent_accepted} gives \eqref{eq:psi_descent_combined}.
\end{proof}

The descent estimate implies finite termination of the inner loop.
\begin{lemma}\label{lem_inner_finite_slack}
	Suppose that Assumptions~\ref{asup_stationarity_lower}, \ref{asup:bound}, and~\ref{asup:F_lis_cts} hold. Fix \(k\ge0\), and let\linebreak[4] \(\{(x^{k,\ell},y^{k,\ell},z^{k,\ell},s^{k,\ell},\theta^{k,\ell})\}_{\ell\ge0}\), together with the intermediate points \(\{\theta^{k,\ell+1/2}\}_{\ell\ge0}\), be generated by the inner loop of Algorithm~\ref{alg2} with fixed parameters \(\sigma_k\) and \(M_{z,k}\). Assume further that \(\sigma_k\in\mathbb R_{++}^2\), \(\tau_k>0\), \(M_{z,k}>0\), and
	$
	\sum_{\ell=0}^{\infty}(\zeta^\theta_{k,\ell})^2<+\infty .
	$
	Then the inner-loop termination criterion \eqref{inner_stop_slack} is satisfied after finitely many inner iterations. Consequently, the termination index of the \(k\)-th inner loop in Algorithm~\ref{alg2} is well defined and finite.
\end{lemma}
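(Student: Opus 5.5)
We will argue by telescoping the descent estimate of Lemma~\ref{lem_descent} together with a lower bound on \(\psi_{\sigma_k}\) over \(\Omega\times\mathcal C_k\). First, \(\psi_{\sigma_k}\) is bounded below along the inner iterates. Indeed, \(\mathcal G_\gamma\ge0\) by Lemma~\ref{lem:gap_nonnegative_exact}. Note that \(x^{k,\ell}\in X\), since \((x^{k,\ell},y^{k,\ell})\in\Omega\), and \(z^{k,\ell}\in\mathbb R_+^q\). The slack penalty is nonnegative, and \(F\) is continuous on the compact set \(\Omega\) (Assumption~\ref{asup:bound}). Hence
\[
\psi_{\sigma_k}(x^{k,\ell},y^{k,\ell},z^{k,\ell},s^{k,\ell})\ge \frac{1}{\sigma_{k,1}}\min_{\Omega}F=:\underline\psi_k>-\infty .
\]
The initial value \(\psi_{\sigma_k}(x^{k,0},y^{k,0},z^{k,0},s^{k,0})\) is finite because \(\mathcal G_\gamma\) is continuous (Lemma~\ref{smooth_G}).

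Next, we sum \eqref{eq:psi_descent_combined} over \(\ell=0,\dots,L\). This gives
\[
\sum_{\ell=0}^{L}\Big(c_\alpha\|(x^{\ell+1},y^{\ell+1})-(x^\ell,y^\ell)\|^2+c_\beta\|(z^{\ell+1},s^{\ell+1})-(z^\ell,s^\ell)\|^2\Big)
\le \psi_{\sigma_k}(x^{0},y^{0},z^{0},s^{0})-\underline\psi_k+2\gamma_1\sum_{\ell=0}^{\infty}(\zeta^\theta_{k,\ell})^2 .
\]
The right-hand side is finite by the summability assumption. Letting \(L\to\infty\) and setting \(c:=\min\{c_\alpha,c_\beta\}>0\), we obtain
\[
\sum_{\ell}\|(x^{\ell+1},y^{\ell+1},z^{\ell+1},s^{\ell+1})-(x^\ell,y^\ell,z^\ell,s^\ell)\|^2<\infty .
\]
Consequently the successive differences tend to zero. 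Summability also forces \(\zeta^\theta_{k,\ell}\to0\) as \(\ell\to\infty\), and hence \(\max\{\zeta^\theta_{k,\ell},\zeta^\theta_{k,\ell+1}\}\to0\).

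Since \(\sigma_k\), \(M_{z,k}\) and \(\tau_k\) are fixed positive numbers during the \(k\)-th inner loop, both thresholds in \eqref{inner_stop_slack} are fixed positive constants, namely \(\tau_k/(\sigma_{k,1}\sigma_{k,2}M_{z,k})\) and \(\tau_k/(\sigma_{k,1}M_{z,k})\). Each of the two quantities being tested tends to zero. Therefore there is an index \(\ell_0\) such that both inequalities in \eqref{inner_stop_slack} hold for every \(\ell\ge\ell_0\), and in particular at some finite \(\ell\). The termination index is then the first such \(\ell\), which is finite. Each inner iteration is itself well defined: the line searches terminate finitely by Lemma~\ref{lem_descent}, and the inexact points \(\theta^{\ell+1/2}\) and \(\theta^{\ell+1}\) exist since \(\theta^*\) satisfies \(R=0\).

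The main subtle step is the lower bound on \(\psi_{\sigma_k}\). Lemma~\ref{lem_descent} is stated for the exact \(\psi_{\sigma_k}\), so we only need \(\mathcal G_\gamma\ge0\), and this holds on \(X\times\mathbb R^m\times\mathbb R_+^q\). We must check that the iterates stay in this set, which follows from the projections onto \(\Omega\) and \(\mathcal C_k\). Apart from this, the argument is routine.
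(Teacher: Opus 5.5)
Your proposal is correct and follows the paper's proof essentially step for step. You telescope \eqref{eq:psi_descent_combined} against a lower bound on \(\psi_{\sigma_k}\), deduce square-summable (hence vanishing) successive differences and \(\zeta^\theta_{k,\ell}\to0\), and conclude because both thresholds in \eqref{inner_stop_slack} are fixed positive constants. The only cosmetic difference is how the lower bound is obtained: you use \(\mathcal G_\gamma\ge0\) and \(\min_\Omega F\), whereas the paper takes the minimum of the continuous function \(\psi_{\sigma_k}\) over the compact set \(\Omega\times\mathcal C_k\); both are valid.
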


\begin{proof}
	By Lemma~\ref{lem_descent}, the line searches terminate finitely and \eqref{eq:psi_descent_combined} holds. Since \(\psi_{\sigma_k}\) is continuous on the compact set \(\Omega\times\mathcal C_k\), define \(\underline\psi_k:=\min_{\Omega\times\mathcal C_k}\psi_{\sigma_k}>-\infty\). Summing \eqref{eq:psi_descent_combined} over \(\ell=0,\ldots,N-1\) yields
	\[
	\begin{aligned}
		&\quad\sum_{\ell=0}^{N-1}
		\left[
		c_\alpha
		\|(x^{\ell+1},y^{\ell+1})-(x^\ell,y^\ell)\|^2
		+
		c_\beta
		\|(z^{\ell+1},s^{\ell+1})-(z^\ell,s^\ell)\|^2
		\right]
		\\
		&\le
		\psi_{\sigma_k}(x^0,y^0,z^0,s^0)
		-
		\psi_{\sigma_k}(x^N,y^N,z^N,s^N)
		+
		2\gamma_1
		\sum_{\ell=0}^{N-1}(\zeta^\theta_{k,\ell})^2\\
		&\le
		\psi_{\sigma_k}(x^0,y^0,z^0,s^0)
		-
		\underline\psi_k
		+
		2\gamma_1
		\sum_{\ell=0}^{\infty}(\zeta^\theta_{k,\ell})^2
		<+\infty .
	\end{aligned}
	\]
	Letting \(N\to\infty\) and using \(c_\alpha>0\) and \(c_\beta>0\), we obtain
	\[
	\sum_{\ell=0}^{\infty}
	\|(x^{\ell+1},y^{\ell+1},z^{\ell+1},s^{\ell+1})-(x^\ell,y^\ell,z^\ell,s^\ell)\|^2
	<+\infty,\ \|(x^{\ell+1},y^{\ell+1},z^{\ell+1},s^{\ell+1})-(x^\ell,y^\ell,z^\ell,s^\ell)\|\to0 .
	\]
	Also \(\zeta^\theta_{k,\ell}\to0\). Since \(\sigma_k\), \(M_{z,k}\), and \(\tau_k\) are fixed during the \(k\)-th inner loop, both conditions in \eqref{inner_stop_slack} hold for all sufficiently large \(\ell\). Hence the inner loop terminates finitely.
\end{proof}

\subsection{Convergence to Approximate KKT Points}

We next prove feasibility of accumulation points generated by Algorithm~\ref{alg2}. 

\begin{proposition}\label{prop:slack_feasibility} 
	Suppose that Assumptions~\ref{asup_stationarity_lower}, \ref{asup:bound}, \ref{asup:F_lis_cts}, and~\ref{asup4} hold. Let \(\{(x^k,y^k,z^k,s^k)\}\) be generated by Algorithm~\ref{alg2}. Assume further that $\tau_k \to 0$ as $k \to \infty$, and that the sequence of inexactness parameters $\{\zeta^\theta_{k,\ell}\}$ satisfies
	\[
	\sum_{\ell} (\zeta^\theta_{k,\ell})^2 < \infty \quad \text{for all } k, \qquad \text{and} \qquad \lim_{k \to \infty} \sum_{\ell} (\zeta^\theta_{k,\ell})^2 = 0.
	\]
	Then, every accumulation point $(\bar x, \bar y, \bar z, \bar s)$ of $\{(x^k, y^k, z^k, s^k)\}$ is feasible for the slack reformulation \eqref{reformulation_slack}. That is, it satisfies $(\bar x, \bar y) \in \Omega$, $(\bar z, \bar s) \in \mathcal C$, $\mathcal G_\gamma(\bar x, \bar y, \bar z) = 0$, and $g(\bar x, \bar y) + \bar s = 0$. 
\end{proposition}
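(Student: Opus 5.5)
Membership $(\bar x,\bar y)\in\Omega$ and $(\bar z,\bar s)\in\mathcal C$ is immediate: every iterate lies in $\Omega\times\mathcal C_k\subset\Omega\times\mathcal C$, and both sets are closed. The real content is $\mathcal G_\gamma(\bar x,\bar y,\bar z)=0$ and $g(\bar x,\bar y)+\bar s=0$. By continuity of $\mathcal G_\gamma$ (Lemma~\ref{smooth_G}) and of $g$, it suffices to show, along the subsequence $K$ converging to the accumulation point,
\[
\mathcal G_\gamma(x^{k+1},y^{k+1},z^{k+1})\to0,\qquad \|g(x^{k+1},y^{k+1})+s^{k+1}\|\to0 .
\]
The natural quantity to control is the penalty merit $\Phi_k:=\mathcal G_\gamma(x^{k+1},y^{k+1},z^{k+1})+\tfrac{\sigma_{k,2}}{2}\|g(x^{k+1},y^{k+1})+s^{k+1}\|^2$. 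By Lemma~\ref{error_bound} and the inner-stop condition $\sigma_{k,1}M_{z,k}\zeta^\theta_{k,\ell+1}\le\tau_k$,
\[
\Phi_k\le t^{k+1}+\tfrac{\gamma_1}{2}(\zeta^\theta_{k,\ell+1})^2 .
\]
Since $\hat{\mathcal G}_\gamma\le\mathcal G_\gamma$ as well, $\Phi_k$ and $t^{k+1}$ are equivalent up to a vanishing error.

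I then split into two cases according to whether $\sigma_k$ is eventually constant.

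\textbf{Case A: \eqref{penalty_residual_test_slack} eventually always holds.} Then $\sigma_k\equiv\sigma$ from some index on. We have $t^{k+1}\le c_\rho\tau_k\to0$, so $\Phi_k\to0$. Because $\sigma_{k,2}$ is bounded below by $\sigma_{0,2}>0$, both $\mathcal G_\gamma\to0$ and $\|g+s\|\to0$ follow directly. This case is easy.

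\textbf{Case B: the test fails infinitely often.} Then $\sigma_{k,1},\sigma_{k,2}\to\infty$, since each failure adds $\varrho_\rho$ and $\sigma$ never decreases. The goal is $\Phi_k=O(1/\sigma_{k,1})+o(1)$, which I would obtain by comparing the inner-loop output with its starting point. Lemma~\ref{lem_descent}, summed over the inner loop (as in Lemma~\ref{lem_inner_finite_slack}), gives
\[
\psi_{\sigma_k}(x^{k+1},y^{k+1},z^{k+1},s^{k+1})\le\psi_{\sigma_k}(x^{k},\tilde y^{k},\tilde z^{k},\tilde s^{k})+2\gamma_1\sum_\ell(\zeta^\theta_{k,\ell})^2 .
\]
Multiplying out, $\Phi_k\le\psi_{\sigma_k}(\text{start})-F(x^{k+1},y^{k+1})/\sigma_{k,1}+o(1)$, and $F$ is bounded on the compact set $\Omega$. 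It therefore remains to bound $\psi_{\sigma_k}$ at the initial point by $O(1/\sigma_{k,1})$ plus vanishing terms. The start point arises from the previous outer step in one of three ways.

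\begin{enumerate}
\item Case 1: the test held. Then $t^{k}\le c_\rho\tau_{k-1}$ and $\sigma_k=\sigma_{k-1}$, so the bound transfers directly.
\item Case 2: the condition \eqref{correction_not_needed_slack} holds, giving $\hat{\mathcal G}+\tfrac{\sigma_{k-1,2}}{2}\|g+s\|^2\le\tilde c/\sigma_{k-1,1}$. The $\mathcal G$ part is handled by Lemma~\ref{error_bound}. The $\sigma_2$ part is inflated by the factor $\sigma_{k,2}/\sigma_{k-1,2}$, which is bounded (by at most $1+\varrho_\rho/\sigma_{0,2}$).
\item Case 3: the correction was applied, and \eqref{inexacty_yz_slack} holds. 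Here I check that the construction of $\tilde s$ yields $\|g(x,\tilde y)+\tilde s\|^2=\|g_{I_+}\|^2+\|[g_{I_0}]_+\|^2$, since $g_i+[-g_i]_+=[g_i]_+$. The same ratio argument then applies. I also need $(\tilde z,\tilde s)\in\mathcal C_k$, which is guaranteed by the update \eqref{update_Mzk_slack}.
\end{enumerate}

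In every sub-case $\psi_{\sigma_k}(\text{start})\le C/\sigma_{k,1}+C\tau_{k-1}+O\big((\zeta^\theta_{k,0})^2\big)$.

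Combining the two steps gives $\Phi_k\le C'/\sigma_{k,1}+o(1)\to0$. Hence $\mathcal G_\gamma\to0$, and also $\|g+s\|^2\le2\Phi_k/\sigma_{k,2}\to0$.

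I expect the main obstacle to be the bookkeeping in Case B. The one-step bound on the starting point must be uniform across the three correction branches. The index shift between $\sigma_{k-1}$ and $\sigma_k$ has to be absorbed, and the residual terms $\zeta^\theta$ must genuinely vanish; this is where the hypothesis $\lim_k\sum_\ell(\zeta^\theta_{k,\ell})^2=0$ is used.
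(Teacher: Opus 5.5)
Your proposal is correct and follows essentially the same route as the paper. The set memberships come from closedness, and the vanishing residuals come from splitting on bounded versus unbounded $\sigma_k$. In the bounded case you use $t^{k+1}\le c_\rho\tau_k$. In the unbounded case you combine the summed inner-loop descent estimate with a three-branch bound on the (possibly corrected) starting point, and the factor $\sigma_{k,2}/\sigma_{k-1,2}$ absorbs the index shift. The only cosmetic difference is that the paper works with the shifted merit $\tilde\psi_{\sigma_k}=\psi_{\sigma_k}-\underline F/\sigma_{k,1}$, whose terms are all nonnegative, instead of tracking the $F$-term separately. Your explicit checks that $(\tilde z^{k},\tilde s^{k})\in\mathcal C_{k}$ and that $g_i+[-g_i]_+=[g_i]_+$ fill in details the paper leaves implicit.
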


\begin{proof}
	Let \((\bar x,\bar y,\bar z,\bar s)\) be any accumulation point of $\{(x^{k+1}, y^{k+1},z^{k+1},s^{k+1})\}$, and let \(\ell_k\) be the finite termination index of the \(k\)-th inner loop, whose existence follows from Lemma~\ref{lem_inner_finite_slack}. Thus
	\[
	(x^{k+1},y^{k+1},z^{k+1},s^{k+1})
	=
	(x^{k,\ell_k+1},y^{k,\ell_k+1},z^{k,\ell_k+1},s^{k,\ell_k+1}).
	\]
	The closedness of \(\Omega\) and \(\mathcal C\), together with \((x^{k+1},y^{k+1})\in\Omega\) and \((z^{k+1},s^{k+1})\in\mathcal C_k\subseteq\mathcal C\), gives \((\bar x,\bar y,\bar z,\bar s)\in\Omega\times\mathcal C\).
	
	Under Assumption~\ref{asup:bound}, \(F\) is bounded below on \(\Omega\). Let $\underline F:=\inf_{(x,y)\in\Omega}F(x,y)>-\infty$, and define 
	\[
	\tilde{\psi}_{\sigma_k}(x,y,z,s)
	:=
	\psi_{\sigma_k}(x,y,z,s)-\frac{\underline F}{\sigma_{k,1}}
	=
	\frac{F(x,y)-\underline F}{\sigma_{k,1}}
	+\mathcal G_\gamma(x,y,z)
	+\frac{\sigma_{k,2}}{2}\|g(x,y)+s\|^2 .
	\]
	The three terms in \(\tilde{\psi}_{\sigma_k}\) are nonnegative on \(\Omega\times\mathcal C\), because \(\mathcal G_\gamma\ge0\) and \(\sigma_{k,1},\sigma_{k,2}>0\).
	
	We estimate the corrected point used to initialize the \(k\)-th inner loop. Let \(\ell_{k-1}\) be the termination index of the \((k-1)\)-st inner loop. If \eqref{penalty_residual_test_slack} holds at the end of the \((k-1)\)-st outer iteration, then
	\[
		\hat{\mathcal G}_\gamma(x^k,\tilde y^k,\tilde z^k;\tilde\theta^k)
		+
		\frac{\sigma_{k-1,2}}{2} \|g(x^k,\tilde y^k)+\tilde s^k\|^2
		=t^k
		\le c_\rho\tau_{k-1}.
	\]
	Using Lemma~\ref{error_bound} and \(R(\tilde\theta^k;x^k,\tilde y^k,\tilde z^k)\le\zeta^\theta_{k-1,\ell_{k-1}+1}\), we obtain 
	\begin{equation}\label{eq:penalty_test_exact_bound}
		\mathcal G_\gamma(x^k,\tilde y^k,\tilde z^k)
		+
		\frac{\sigma_{k-1,2}}{2}\|g(x^k,\tilde y^k)+\tilde s^k\|^2
		\le
		c_\rho\tau_{k-1}
		+
		\frac{\gamma_1}{2}(\zeta^\theta_{k-1,\ell_{k-1}+1})^2.
	\end{equation}
	If the penalty residual test \eqref{penalty_residual_test_slack} fails but the correction is not needed, then \eqref{correction_not_needed_slack} gives
	\[
		\hat{\mathcal G}_\gamma(x^k,\tilde y^k,\tilde z^k;\tilde\theta^k)
		+
		\frac{\sigma_{k-1,2}}{2}\|g(x^k,\tilde y^k)+\tilde s^k\|^2
		\le
		\frac{\tilde c}{\sigma_{k-1,1}},
	\]
	and Lemma~\ref{error_bound} gives
	\begin{equation}\label{eq:no_correction_exact_bound}
		\mathcal G_\gamma(x^k,\tilde y^k,\tilde z^k)
		+
		\frac{\sigma_{k-1,2}}{2}	\|g(x^k,\tilde y^k)+\tilde s^k\|^2
		\le
		\frac{\tilde c}{\sigma_{k-1,1}}
		+
		\frac{\gamma_1}{2}(\zeta^\theta_{k-1,\ell_{k-1}+1})^2.
	\end{equation}
	If the correction is performed, then \eqref{inexacty_yz_slack}, Lemma~\ref{error_bound}, and the definition of \(\tilde s^k\) imply
	\begin{equation}\label{eq:correction_exact_bound}
		\begin{aligned}
			&\quad	\mathcal G_\gamma(x^k,\tilde y^k,\tilde z^k) +
			\frac{\sigma_{k-1,2}}{2}	\|g(x^k,\tilde y^k)+\tilde s^k\|^2 \\ &\le	\hat{\mathcal G}_\gamma(x^k,\tilde y^k,\tilde z^k;\tilde\theta^k) +  \frac{\gamma_1}{2} R(\tilde\theta^k;x^k,\tilde y^k,\tilde z^k)^2 +
			\frac{\sigma_{k-1,2}}{2}	\|g(x^k,\tilde y^k)+\tilde s^k\|^2 
			\le
			\frac{\tilde c}{\sigma_{k-1,1}}.
		\end{aligned}
	\end{equation}
	
	If \(\{\sigma_k\}\) is bounded, then \eqref{update_rho_slack} implies that \eqref{penalty_residual_test_slack} fails only finitely many times. Hence, for all sufficiently large \(k\), the penalty residual test holds. Applying \eqref{eq:penalty_test_exact_bound} with its index \(k\) replaced by \(k+1\) yields
	\begin{equation}\label{eq:bounded_penalty_terminal_residual}
		\mathcal G_\gamma(x^{k+1},y^{k+1},z^{k+1})
		+
		\frac{\sigma_{k,2}}{2}	\|g(x^{k+1},y^{k+1})+s^{k+1}\|^2
		\le
		c_\rho\tau_k
		+
		\frac{\gamma_1}{2}(\zeta^\theta_{k,\ell_k+1})^2
		\to0 .
	\end{equation}
	
	It remains to consider the case in which \(\{\sigma_k\}\) is unbounded. By  \eqref{update_rho_slack}, this is equivalent to \(\sigma_{k,1}\to+\infty\) and \(\sigma_{k,2}\to+\infty\). We claim that $\tilde{\psi}_{\sigma_k}(x^k,\tilde y^k,\tilde z^k,\tilde s^k)\to0$. Indeed, by boundedness of \(F\) on \(\Omega\), there exists \(M_F>0\) such that
	\(F(x,y)-\underline F\le M_F\) for all \((x,y)\in\Omega\). In all possible branches,
	\eqref{eq:penalty_test_exact_bound}, \eqref{eq:no_correction_exact_bound}, and
	\eqref{eq:correction_exact_bound} imply
		\[
	\tilde{\psi}_{\sigma_k}(x^k,\tilde y^k,\tilde z^k,\tilde s^k)
	\le
	\frac{M_F}{\sigma_{k,1}}
	+
	\frac{\sigma_{k,2}}{\sigma_{k-1,2}}
	\left[
	\max\left\{
	c_\rho\tau_{k-1},
	\frac{\tilde c}{\sigma_{k-1,1}}
	\right\}
	+
	\frac{\gamma_1}{2}
	(\zeta^\theta_{k-1,\ell_{k-1}+1})^2
	\right].
	\]
	By the penalty update rule, $1\le\sigma_{k,2}/\sigma_{k-1,2}\le1+\varrho_\rho/\sigma_{k-1,2}\to1$. The right-hand side tends to zero because \(1/\sigma_{k-1,1}\to0\), \(\tau_k\to0\), and \(\sum_{\ell}(\zeta^\theta_{k,\ell})^2\to0\). This proves $\tilde{\psi}_{\sigma_k}(x^k,\tilde y^k,\tilde z^k,\tilde s^k)\to0$.
	
	Summing \eqref{eq:psi_descent_combined} in the \(k\)-th inner loop gives
	\begin{equation}\label{eq:terminal_merit_bound}
		\tilde{\psi}_{\sigma_k}(x^{k+1},y^{k+1},z^{k+1},s^{k+1})
		\le
		\tilde{\psi}_{\sigma_k}(x^k,\tilde y^k,\tilde z^k,\tilde s^k)
		+
		2\gamma_1
		\sum_{\ell=0}^{\ell_k}(\zeta^\theta_{k,\ell})^2.
	\end{equation}
	Combining $\tilde{\psi}_{\sigma_k}(x^k,\tilde y^k,\tilde z^k,\tilde s^k)\to0$, \eqref{eq:terminal_merit_bound}, and $\sum_{\ell}(\zeta^\theta_{k,\ell})^2 \to0$, as $k \to \infty$  yields\linebreak[4] $\tilde{\psi}_{\sigma_k}(x^{k+1},y^{k+1},z^{k+1},s^{k+1})\to0$. Since all terms in \(\tilde{\psi}_{\sigma_k}\) are nonnegative, it follows that
	\[
	\mathcal G_\gamma(x^{k+1},y^{k+1},z^{k+1})\to0,
	\qquad
	\|g(x^{k+1},y^{k+1})+s^{k+1}\|\to0 .
	\]
	Together with \eqref{eq:bounded_penalty_terminal_residual}, this gives the same vanishing residual property in both cases. Passing to the convergent subsequence and using the continuity of \(\mathcal G_\gamma\) and \(g\) gives \(\mathcal G_\gamma(\bar x,\bar y,\bar z)=0\) and \(g(\bar x,\bar y)+\bar s=0\).
\end{proof}

\begin{remark}
	A simple class of the sequence \(\zeta^\theta_{k,\ell}\) satisfying the above conditions is given by  \(\zeta^\theta_{k,\ell}=\tilde{\zeta}^\theta_k\hat{\zeta}^\theta_\ell\), where \(\tilde{\zeta}^\theta_k\to0\) and \(\sum_{\ell=0}^{\infty}(\hat{\zeta}^\theta_\ell)^2<\infty\). For instance, one may take \(\tilde{\zeta}^\theta_k=(k+1)^{-1}\) and \(\hat{\zeta}^\theta_\ell=2^{-\ell}\).
\end{remark}

The next theorem shows that the accumulation points of the iterates generated by Algorithm~\ref{alg2} satisfy the approximate KKT condition in Definition~\ref{def:two_parameter_approximate_kkt}.

\begin{theorem}\label{thm:slack_algorithm_approximate_kkt}
	Suppose that Assumptions~\ref{asup_stationarity_lower}, \ref{asup:bound}, \ref{asup:F_lis_cts}, and~\ref{asup4} hold, and let \(\{(x^k,y^k,z^k,s^k)\}\) be generated by Algorithm~\ref{alg2}. Assume that $\tau_k \to 0$ as $k \to \infty$, and that the sequence of inexactness parameters $\{\zeta^\theta_{k,\ell}\}$ satisfies
	\[
	\sum_{\ell} (\zeta^\theta_{k,\ell})^2 < \infty \quad \text{for all } k, \qquad \text{and} \qquad \lim_{k \to \infty} \sum_{\ell} (\zeta^\theta_{k,\ell})^2 = 0.
	\]
Then, for every accumulation point \((\bar x,\bar y,\bar z,\bar s)\) of the sequence of iterates $\{(x^k, y^k, z^k, s^k)\}$, the pair \((\bar x,\bar y)\) satisfies the approximate KKT condition for $\mathrm{(GP)}_{\mathcal C}$ in Definition~\ref{def:two_parameter_approximate_kkt}.

	Moreover, assume that the MFCQ holds at $\bar y$ for the lower-level constraint system $\{y\mid g(\bar x,y)\le0\}$, and that the MFCQ holds at $(\bar x,\bar y)$ for the upper-level inequality system $\{(x,y)\mid G(x,y)\le0\}$. Let $\bar w:=(\bar x,\bar y,\bar z)$. Then:
	\begin{itemize}
		\item If \(\{ \sigma_k \}\) is bounded, then \(\bar w\) is S-stationary for \eqref{eq:kkt-reform}.
		\item If \(\sigma_k\to+\infty\), and assume that the MPCC-MFCQ holds at \(\bar w\) for \eqref{eq:kkt-reform}, then \(\bar w\) is M-stationary for \eqref{eq:kkt-reform}.
	\end{itemize}
\end{theorem}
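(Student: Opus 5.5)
The plan is to exhibit, along the subsequence converging to $(\bar x,\bar y,\bar z,\bar s)$, explicit sequences satisfying \eqref{approximate_kkt-s} with $\rho_1^k:=\sigma_{k,1}$ and $\rho_2^k:=\sigma_{k,1}\sigma_{k,2}/2$, taking the point $(x^{k+1},y^{k+1},z^{k+1},s^{k+1})$. After that, the S-/M-stationarity conclusions follow directly from Theorem~\ref{thm:two_parameter_M_stationarity}.

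The feasibility-type requirements are immediate. Proposition~\ref{prop:slack_feasibility} gives $\mathcal G_\gamma(x^{k+1},y^{k+1},z^{k+1})\to0$ and $g(x^{k+1},y^{k+1})+s^{k+1}\to0$ along the subsequence. Membership in $\Omega\times\mathcal C$ holds by construction.

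The main work is the residual $\varepsilon^k$. Write $\ell=\ell_k$ for the termination index. The $(x,y)$-projection step gives
\[
0\in d_{xy}^\ell+\tfrac{1}{\alpha_\ell}\bigl((x^{\ell+1},y^{\ell+1})-(x^\ell,y^\ell)\bigr)+\mathcal N_\Omega(x^{\ell+1},y^{\ell+1}).
\]
I will compare $d_{xy}^\ell$ with the exact gradient $\nabla_{x,y}\psi_{\sigma_k}(x^{\ell+1},y^{\ell+1},z^{\ell+1},s^{\ell+1})$ in three pieces.
\begin{itemize}
\item Moving $(x^\ell,y^\ell)\to(x^{\ell+1},y^{\ell+1})$ costs at most $L_{\hat\psi}^{xy}$ times the step, using the Lipschitz bounds from Lemma~\ref{lem10}.
\item Replacing $\theta^\ell$ by $\theta^*(x^\ell,y^\ell,z^\ell)$ costs $O(\gamma_1\zeta^\theta_{k,\ell})$, by Lemma~\ref{error_bound} and the Lipschitz continuity of $\nabla\mathcal L$ on the bounded set from Lemma~\ref{lem:theta-bounded}.
\item Moving $z^\ell\to z^{\ell+1}$ costs $L^{z}_{\nabla_{xy}\mathcal G}$ times the step, by Lemma~\ref{lem:Gxy-z-lip}.
\end{itemize}
Similarly, the $(z,s)$-projection onto the nonconvex set $\mathcal C_k$ yields a proximal/Fréchet normal inclusion at $(z^{\ell+1},s^{\ell+1})$ in $\mathcal N_{\mathcal C_k}$. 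For $k$ large, every component of $(\bar z,\bar s)$ is below $M_{z,k}\to\infty$, so the box constraints are inactive and $\mathcal N_{\mathcal C_k}=\mathcal N_{\mathcal C}$ locally. The direction error from $\theta^{\ell+1/2}$ is $O(\zeta^\theta_{k,\ell})$, again via Lemma~\ref{error_bound}.

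Multiplying by $\sigma_{k,1}$ converts $\psi_{\sigma_k}$ into $F+\sigma_{k,1}\mathcal G_\gamma+\frac{\sigma_{k,1}\sigma_{k,2}}{2}\|g+s\|^2$. The residual then has the form
\[
\|\varepsilon^k\|\lesssim \sigma_{k,1}\Bigl(\tfrac1{\alpha_\ell}+\tfrac1{\beta_\ell}+L\Bigr)\|\Delta^\ell\|+\sigma_{k,1}C\max\{\zeta^\theta_{k,\ell},\zeta^\theta_{k,\ell+1}\},
\]
where $\Delta^\ell$ is the last inner step.

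\emph{Main obstacle.} The task is to show that these constants grow at most like $\sigma_{k,2}M_{z,k}$, so that \eqref{inner_stop_slack} gives $\|\varepsilon^k\|=O(\tau_k)\to0$. The backtracking gives $1/\alpha_\ell\le\kappa_L\max\{\widehat L,L^*_\alpha\}+c_\alpha$ and likewise for $\beta_\ell$. Inspecting \eqref{def_L^{xy}}, I expect $L^*_\alpha,L^*_\beta=O(\sigma_{k,2}M_{z,k})$: the $\sigma_2$ term carries $M_Z\le M_{z,k}$, and $L_{\mathcal L}^x$, $M_\lambda$, $L_R$ grow at most linearly in the $z$-bound. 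The $\theta$-bounds depend on $M_{z,k}$ too, but only through compact-set constants that scale linearly, which I would verify by tracking Lemma~\ref{lem:theta-bounded}. Similarly, the $\zeta$-error constants are $O(M_{z,k})$, since they come from gradients of $z^\top g$. These bounds match exactly the scaling in \eqref{inner_stop_slack}.

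\emph{Remaining conditions.} With the residual controlled, the regime conditions of Theorem~\ref{thm:two_parameter_M_stationarity} need checking. If $\sigma_k$ is bounded, so are $\rho_1^k,\rho_2^k$. If $\sigma_k\to\infty$, the update rule gives both components diverging, and then $\rho_2^k/\rho_1^k=\sigma_{k,2}/2\to\infty$. Finally, invoke Theorem~\ref{thm:two_parameter_M_stationarity} with $\bar z$ as the accumulation point.
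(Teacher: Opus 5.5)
Your overall architecture matches the paper: the choice $\rho_1=\sigma_{k,1}$, $\rho_2=\sigma_{k,1}\sigma_{k,2}/2$, the projection inclusions multiplied by $\sigma_{k,1}$, the inactivity of the truncation bound, and the final appeal to Theorem~\ref{thm:two_parameter_M_stationarity}. The gap is in how you resolve what you call the main obstacle. You propose to work on the whole box $[0,M_{z,k}]^q$ and to show that all constants grow at most linearly in $M_{z,k}$. These constants include $L^x_{\mathcal L}$, $L_R^{xy}$, $L_R^z$, $M_{g,Z}$, $L_{\theta,z}$, $L^z_{\nabla_{xy}\mathcal G}$ and the $\theta$-error constants. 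Tracking Lemma~\ref{lem:theta-bounded} on that box only tells you that the admissible $\theta$'s lie in a ball of radius $O(1+M_{z,k})$, since $\|\theta^*-y\|\le\gamma_1\|\nabla_y\mathcal L(x,y,z)\|$. The constants above are suprema of first and second derivatives of $f$ and $g$ over $\{(x,\theta)\mid x\in X,\ \|\theta\|\le C(1+M_{z,k})\}$. For general $C^2$ convex lower-level data these suprema are not linear in $M_{z,k}$; for instance, if a component of $g$ grows exponentially in $y$, the bound on $\nabla_y g$ over that ball grows exponentially. Since $M_{z,k}\ge M_{z,0}+k\Delta_z$ while $\tau_k\to0$ at an unrelated rate, an estimate of the form $C(M_{z,k})\,\tau_k/(\sigma_{k,2}M_{z,k})$ need not vanish. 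So $\varepsilon^k\to0$ does not follow from your plan.

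The missing idea is to localize along the convergent subsequence instead of tracking growth in $M_{z,k}$. The terminal iterate $(x^{k_j+1},y^{k_j+1},z^{k_j+1},s^{k_j+1})$ converges. The first test in \eqref{inner_stop_slack} bounds the last inner step by $\tau_k/(\sigma_{k,1}\sigma_{k,2}M_{z,k})\to0$, because the denominators are bounded below. Hence the penultimate iterate $(x^{k,\ell},y^{k,\ell},z^{k,\ell},s^{k,\ell})$ also lies in a fixed bounded set. Since $\zeta^\theta_{k,\ell},\zeta^\theta_{k,\ell+1}\to0$, Lemma~\ref{lem:theta-bounded} applied with a fixed bounded $Z$ then places $\theta^{k,\ell}$, $\theta^{k,\ell+1/2}$ and the relevant exact $\theta^*$'s in a $k$-independent bounded set. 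All the constants above become independent of $k$, apart from the explicit $\sigma_{k,2}$ factors from the slack-penalty term and from $L_{\hat\psi}^{zs}=\sigma_{k,2}$.

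The accepted step sizes satisfy $1/\alpha_\ell,1/\beta_\ell\le C\sigma_{k,2}$. This uses the same localized constants, because the acceptance thresholds depend only on the fixed $(z^\ell,s^\ell,\theta^\ell)$ or $\theta^{\ell+1/2}$, with trial points in compact $\Omega$ or controlled by the concavity bound. The factor $M_{z,k}$ in the stopping rule is then merely slack and does not need to match any growth of constants.

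A minor point: after moving $(x,y)$ with $\theta^\ell$ fixed, compare $\theta^\ell$ with $\theta^*(x^{\ell+1},y^{\ell+1},z^\ell)$, whose residual is at most $\zeta^\theta_{k,\ell}+L_R^{xy}\|\Delta^\ell\|$. Comparing with $\theta^*(x^\ell,y^\ell,z^\ell)$ instead would require a Lipschitz estimate for $\theta^*$ in $(x,y)$ that the available lemmas do not provide. You should also keep the term $\sigma_{k,2}\nabla g^\top(s^{\ell+1}-s^\ell)$ explicitly in the $(x,y)$ residual.
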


\begin{proof}
	Let \((\bar x,\bar y,\bar z,\bar s)\) be any accumulation point of the sequence \(\{(x^{k+1},y^{k+1},z^{k+1},s^{k+1})\}\) generated by the algorithm. Then there exists a subsequence \(\{k_j\}\), and write \(\ell_j:=\ell_{k_j}\), such that
	\[
	(x^{k_j+1},y^{k_j+1},z^{k_j+1},s^{k_j+1})
	:=(x^{k_j,\ell_j+1},y^{k_j,\ell_j+1},z^{k_j,\ell_j+1},s^{k_j,\ell_j+1})
	\to
	(\bar x,\bar y,\bar z,\bar s).
	\]	
	By Proposition~\ref{prop:slack_feasibility}, \((\bar x,\bar y,\bar z,\bar s)\) is feasible for \eqref{reformulation_slack}. Moreover, \(M_{z,k_j}\to+\infty\), whereas the convergent sequence \(\{(z^{k_j,\ell_j+1},s^{k_j,\ell_j+1})\}\) is bounded. Hence, for all sufficiently large \(j\), $z_i^{k_j,\ell_j+1}<M_{z,k_j}$, and $s_i^{k_j,\ell_j+1}<M_{z,k_j}$, for all $i=1,\ldots,q$. Therefore, the additional upper-bound constraints defining \(\mathcal C_{k_j}\) are inactive. Thus,
	\[
		\mathcal N_{\mathcal C_{k_j}}
		(z^{k_j,\ell_j+1},s^{k_j,\ell_j+1})
		=
		\mathcal N_{\mathcal C}
		(z^{k_j,\ell_j+1},s^{k_j,\ell_j+1}).
	\]

	We next derive the approximate first-order inclusion at the point \((x^{k,\ell_k+1},y^{k,\ell_k+1},z^{k,\ell_k+1},s^{k,\ell_k+1})\). For notational simplicity, write \(\ell:=\ell_k\). The stopping rule \eqref{inner_stop_slack} gives
	\[
	\begin{aligned}
		\left\|
		(x^{k,\ell+1},y^{k,\ell+1},z^{k,\ell+1},s^{k,\ell+1})
		-
		(x^{k,\ell},y^{k,\ell},z^{k,\ell},s^{k,\ell})
		\right\|
		&\le
		\frac{\tau_k}{\sigma_{k,1}\sigma_{k,2}M_{z,k}},\\
		\sigma_{k,1}M_{z,k}
		\max\{\zeta^\theta_{k,\ell},\zeta^\theta_{k,\ell+1}\}&\le\tau_k.
	\end{aligned}
	\]
	Along this subsequence,
	\((x^{k,\ell+1},y^{k,\ell+1},z^{k,\ell+1},s^{k,\ell+1})\)
	is bounded. Since \(\sigma_{k,1}\), \(\sigma_{k,2}\), and \(M_{z,k}\) have positive lower bounds and \(\tau_k\to0\), the first condition in \eqref{inner_stop_slack} implies that
	\[
	\left\|
	(x^{k,\ell+1},y^{k,\ell+1},z^{k,\ell+1},s^{k,\ell+1})
	-
	(x^{k,\ell},y^{k,\ell},z^{k,\ell},s^{k,\ell})
	\right\|\to0
	\]
	along this subsequence. Hence the preceding inner iterates
	\((x^{k,\ell},y^{k,\ell},z^{k,\ell},s^{k,\ell})\) are also bounded. Therefore, there exists a bounded set \(Z\subset\mathbb R_+^q\) containing \(z^{k,\ell}\), \(z^{k,\ell+1}\), \(s^{k,\ell}\), and \(s^{k,\ell+1}\) for all sufficiently large \(k\) along the subsequence. Moreover, \(\zeta^\theta_{k,\ell}\to0\) and \(\zeta^\theta_{k,\ell+1}\to0\). By Lemma~\ref{lem:theta-bounded}, for all sufficiently large \(k\), the inexact points \(\theta^{k,\ell}\), \(\theta^{k,\ell+1/2}\), and the exact points \(\theta^*(x^{k,\ell+1},y^{k,\ell+1},z^{k,\ell})\), \(\theta^*(x^{k,\ell+1},y^{k,\ell+1},z^{k,\ell+1})\) all belong to a common bounded set. Thus the relevant \(\theta\) of the subsequence lie in a bounded set, there exists a finite constant \(L_{\mathcal L}^{\theta}>0\) such that
	\[
	\|\nabla_x\mathcal L(x,\theta_1,z)-\nabla_x\mathcal L(x,\theta_2,z)\|
	\le
	L_{\mathcal L}^{\theta}\|\theta_1-\theta_2\|
	\]
	for all relevant \(x\), \(z\), and \(\theta_1,\theta_2\). Moreover, Lemmas~\ref{lem:theta-z-lipschitz}-\ref{lem_descent}, applied on the above bounded sets, provide finite constants \(L_R^{xy}\), \(L_{\nabla_{xy}\mathcal G}^{z}\), \(M_{g,Z}\), \(L_{\theta,z}\), and \(L_R^z\) all independent of \(k\) along the selected subsequence.
	
For each sufficiently large \(k\) along the selected subsequence, for notational simplicity, set \(\ell:=\ell_k\) and suppress the outer index \(k\) from all inner-loop quantities along the selected subsequence.
	
	The optimality condition for the \((x,y)\)-projection
	\eqref{update_xy_slack_text} gives
	\[
	-
	d_{xy}^{\ell}
	-
	\frac{1}{\alpha_{\ell}}
	\bigl((x^{\ell+1},y^{\ell+1})-(x^{\ell},y^{\ell})\bigr)
	\in
	{\mathcal N}_{\Omega}(x^{\ell+1},y^{\ell+1}).
	\]
	Multiplying this inclusion by \(\sigma_{k,1}\) and adding
	\(\sigma_{k,1}\nabla_{x,y}\psi_{\sigma_k}
	(x^{\ell+1},y^{\ell+1},z^{\ell+1},s^{\ell+1})\), we obtain
	\begin{equation}\label{eq:slack-xy-projected}
		\xi_{xy}^k\in
		\sigma_{k,1}\nabla_{x,y}\psi_{\sigma_k}
		(x^{\ell+1},y^{\ell+1},z^{\ell+1},s^{\ell+1})
		+
		{\mathcal N}_{\Omega}(x^{\ell+1},y^{\ell+1}),
	\end{equation}
	where
	\[
	\xi_{xy}^k
	:=
	\sigma_{k,1}
	\left(
	\nabla_{x,y}\psi_{\sigma_k}
	(x^{\ell+1},y^{\ell+1},z^{\ell+1},s^{\ell+1})
	-
	d_{xy}^{\ell}
	\right)
	-
	\frac{\sigma_{k,1}}{\alpha_{\ell}}
	\bigl((x^{\ell+1},y^{\ell+1})-(x^{\ell},y^{\ell})\bigr).
	\]
	
	Similarly, the projected \((z,s)\)-update \eqref{update_zs_full} gives
	\begin{equation}\label{eq:slack-zs-projected}
		\xi_{zs}^k\in
		\sigma_{k,1}\nabla_{z,s}\psi_{\sigma_k}
		(x^{\ell+1},y^{\ell+1},z^{\ell+1},s^{\ell+1})
		+
		{\mathcal N}_{\mathcal C_k}(z^{\ell+1},s^{\ell+1}),
	\end{equation}
	where
	\[
	\xi_{zs}^k
	:=
	\sigma_{k,1}
	\left(
	\nabla_{z,s}\psi_{\sigma_k}
	(x^{\ell+1},y^{\ell+1},z^{\ell+1},s^{\ell+1})
	-
	d_{zs}^{\ell}
	\right)
	-
	\frac{\sigma_{k,1}}{\beta_{\ell}}
	\bigl((z^{\ell+1},s^{\ell+1})-(z^{\ell},s^{\ell})\bigr).
	\]

	We now estimate \(\xi_{xy}^k\). The definition of \(\xi_{xy}^k\) gives
	\begin{equation}\label{th3_xy_err}
		\begin{aligned}
			\|\xi_{xy}^k\|
			&\le
			\sigma_{k,1}
			\Big\|
			\nabla_{x,y}\psi_{\sigma_k}
			(x^{\ell+1},y^{\ell+1},z^{\ell+1},s^{\ell+1})
			-
			\nabla_{x,y}\psi_{\sigma_k}
			(x^{\ell+1},y^{\ell+1},z^\ell,s^\ell)
			\Big\|\\
			&\quad+
			\sigma_{k,1}
			\Big\|
			\nabla_{x,y}\psi_{\sigma_k}
			(x^{\ell+1},y^{\ell+1},z^\ell,s^\ell)
			-
			\nabla_{x,y}\hat\psi_{\sigma_k}
			(x^{\ell+1},y^{\ell+1},z^\ell,s^\ell;\theta^\ell)
			\Big\|  \\
			&\quad+
			\sigma_{k,1}
			\Big\|
			\nabla_{x,y}\hat\psi_{\sigma_k}
			(x^{\ell+1},y^{\ell+1},z^\ell,s^\ell;\theta^\ell)
			-
			\nabla_{x,y}\hat\psi_{\sigma_k}
			(x^\ell,y^\ell,z^\ell,s^\ell;\theta^\ell)
			\Big\|\\
			&\quad +
			\frac{\sigma_{k,1}}{\alpha_\ell}
			\|(x^{\ell+1},y^{\ell+1})-(x^\ell,y^\ell)\| .
		\end{aligned}
	\end{equation}
	For the first term, the \(z\)-Lipschitz estimate for \(\nabla_{x,y}\mathcal G_\gamma\) in Lemma \ref{lem:Gxy-z-lip} and the boundedness of \(\nabla g\) yield 
	\begin{equation*}\label{th3_xy_err_1}
		\begin{aligned}
			&\quad \Big\|
			\nabla_{x,y}\psi_{\sigma_k}
			(x^{\ell+1},y^{\ell+1},z^{\ell+1},s^{\ell+1})
			-
			\nabla_{x,y}\psi_{\sigma_k}
			(x^{\ell+1},y^{\ell+1},z^\ell,s^\ell)
			\Big\|  \\
			&\le
			\Big\|
			\nabla_{x,y}\mathcal G_\gamma
			(x^{\ell+1},y^{\ell+1},z^{\ell+1})
			-
			\nabla_{x,y}\mathcal G_\gamma
			(x^{\ell+1},y^{\ell+1},z^\ell)\Big\|+
			\sigma_{k,2}
			\Big\|\nabla_{x,y}g(x^{\ell+1},y^{\ell+1})^\top
			(s^{\ell+1}-s^\ell)
			\Big\| \\
			&\le
			L_{\nabla_{xy}\mathcal G}^{z}
			\|z^{\ell+1}-z^\ell\|
			+
			\sigma_{k,2}M_g
			\|s^{\ell+1}-s^\ell\|.
		\end{aligned}
	\end{equation*}
	For the second term, using the constant \(L_{\mathcal L}^{\theta}\) defined above, Lemma~\ref{error_bound}, and \(R(\theta^\ell;x^\ell,y^\ell,z^\ell)\le \zeta^\theta_\ell\), we have
	\[
	\begin{aligned}
		&\quad \Big\|
		\nabla_{x,y}\psi_{\sigma_k}
		(x^{\ell+1},y^{\ell+1},z^\ell,s^\ell)
		-
		\nabla_{x,y}\hat\psi_{\sigma_k}
		(x^{\ell+1},y^{\ell+1},z^\ell,s^\ell;\theta^\ell)
		\Big\|  \\
		&\le
		\left(L_{\mathcal L}^{\theta}+\frac{1}{\gamma_1}\right)
		\left\|
		\theta^*(x^{\ell+1},y^{\ell+1},z^\ell)
		-
		\theta^\ell
		\right\| \\
		&\le
		(1+\gamma_1L_{\mathcal L}^{\theta})
		\left[
		\zeta^\theta_\ell
		+
		L_R^{xy}
		\|(x^{\ell+1},y^{\ell+1})-(x^\ell,y^\ell)\|
		\right].
	\end{aligned}
	\]
	For the third term, the \(L_{\hat\psi}^{xy}\)-Lipschitz continuity of \((x,y)\mapsto\nabla_{x,y}\hat\psi_{\sigma_k} (x,y,z^\ell,s^\ell;\theta^\ell)\) gives
	\begin{equation*}\label{th3_xy_err_3}
		\Big\|
			\nabla_{x,y}\hat\psi_{\sigma_k}
			(x^{\ell+1},y^{\ell+1},z^\ell,s^\ell;\theta^\ell)
			-
			\nabla_{x,y}\hat\psi_{\sigma_k}
			(x^\ell,y^\ell,z^\ell,s^\ell;\theta^\ell)
			\Big\|\le
			L_{\hat\psi}^{xy}
			\|(x^{\ell+1},y^{\ell+1})-(x^\ell,y^\ell)\|.
	\end{equation*}	
	Combining the preceding estimates with \eqref{th3_xy_err}, we obtain
	\begin{equation}\label{th3_xy_err_all}
		\begin{aligned}
			\|\xi_{xy}^k\|
			&\le
			\sigma_{k,1}L_{\nabla_{xy}\mathcal G}^{z}
			\|z^{\ell+1}-z^\ell\|
			+
			\sigma_{k,1}\sigma_{k,2}M_g
			\|s^{\ell+1}-s^\ell\|+
			\sigma_{k,1}(1+\gamma_1L_{\mathcal L}^{\theta})\zeta^\theta_\ell  \\
			&\quad +
			\sigma_{k,1}
			\left[
			(1+\gamma_1L_{\mathcal L}^{\theta})L_R^{xy}
			+
			L_{\hat\psi}^{xy}
			+
			\frac{1}{\alpha_\ell}
			\right]
			\|(x^{\ell+1},y^{\ell+1})-(x^\ell,y^\ell)\|.
		\end{aligned}
	\end{equation}

	By the definition of \(L_{\hat\psi}^{xy}\) in \eqref{def_L^{xy}}, there exists \(C_{xy}>0\) such that $L_{\hat\psi}^{xy}\le C_{xy}M_{z,k}(1+\sigma_{k,2})$. Moreover, the accepted backtracking index satisfies $\widehat L_{\ell}^{xy,j_{\ell}^{xy}}\le\max\{\widehat L_{\ell}^{xy,0},\kappa_L L_\alpha^*\}$, $L_\alpha^*:=L_{\hat\psi}^{xy}/2+\gamma_1(L_R^{xy})^2$. Since \(L_R^{xy}\) and the initial trial values are uniformly bounded along the selected subsequence, while \(\sigma_{k,2}\) and \(M_{z,k}\) are bounded away from zero, enlarging \(C_{xy}\) if necessary gives $(L_{\hat\psi}^{xy}+\alpha_\ell^{-1})=(L_{\hat\psi}^{xy}+\widehat L_{\ell}^{xy,j_{\ell}^{xy}}+c_\alpha)\le C_{xy}\sigma_{k,2}M_{z,k}$. Substituting this estimate and the stopping condition \eqref{inner_stop_slack} into \eqref{th3_xy_err_all}, we obtain
	\[
	\|\xi_{xy}^k\|\le \left(\frac{L_{\nabla_{xy}\mathcal G}^{z}}{\sigma_{k,2}M_{z,k}} + \frac{M_g}{M_{z,k}} +\frac{1+\gamma_1L_{\mathcal L}^{\theta}}{M_{z,k}}+\frac{(1+\gamma_1L_{\mathcal L}^{\theta})L_R^{xy}}{\sigma_{k,2}M_{z,k}} +C_{xy} \right)\tau_k\to 0.
	\]

	For the \((z,s)\)-part, using \eqref{Ggradient} and the definition of
	\(d_{zs}^{\ell}\), the triangle inequality gives
	\begin{equation}\label{th3_zs_err}
		\begin{aligned}
			\|\xi_{zs}^k\|
			&\le
			\sigma_{k,1}
			\Big\|
			\nabla_z\mathcal P_{\gamma_2}(x^{\ell+1},y^{\ell+1},z^{\ell+1})
			-
			\nabla_z\mathcal P_{\gamma_2}(x^{\ell+1},y^{\ell+1},z^\ell)
			\Big\|+
			\sigma_{k,1}\sigma_{k,2}\|s^{\ell+1}-s^\ell\|\\
			&\quad +
			\sigma_{k,1}
			\Big\|
			g(x^{\ell+1},\theta^*(x^{\ell+1},y^{\ell+1},z^{\ell+1}))
			-
			g(x^{\ell+1},\theta^{\ell+1/2})
			\Big\|+
			\frac{\sigma_{k,1}}{\beta_\ell}
			\|(z^{\ell+1},s^{\ell+1})-(z^\ell,s^\ell)\|.
		\end{aligned}
	\end{equation}
	For the first term, the \(2/\gamma_2\)-Lipschitz continuity of
	\(\nabla_z\mathcal P_{\gamma_2}\) gives
	\[
	\Big\|
	\nabla_z\mathcal P_{\gamma_2}(x^{\ell+1},y^{\ell+1},z^{\ell+1})
	-
	\nabla_z\mathcal P_{\gamma_2}(x^{\ell+1},y^{\ell+1},z^\ell)
	\Big\|
	\le
	\frac{2}{\gamma_2}\|z^{\ell+1}-z^\ell\|.
	\]
	For the third term, the \(M_{g,Z}\)-Lipschitz continuity of \(g(x,\cdot)\), Lemmas~\ref{error_bound} and~\ref{lem:theta-z-lipschitz}, and the residual \eqref{theta_slack_half} yield
	\[
		\Big\|
		g(x^{\ell+1},\theta^*(x^{\ell+1},y^{\ell+1},z^{\ell+1}))
		-
		g(x^{\ell+1},\theta^{\ell+1/2})
		\Big\|\le
		M_{g,Z}
		\left[
		L_{\theta,z}\|z^{\ell+1}-z^\ell\|
		+
		\gamma_1\zeta^\theta_\ell
		\right].
	\]
	Combining these estimates with \eqref{th3_zs_err}, we obtain
	\begin{equation}\label{th3_zs_err_all}
		\begin{aligned}
			\|\xi_{zs}^k\|
			&\le
			\sigma_{k,1}
			\left(
			\frac{2}{\gamma_2}
			+
			M_{g,Z}L_{\theta,z}
			\right)
			\|z^{\ell+1}-z^\ell\|+
			\sigma_{k,1}\gamma_1M_{g,Z}\zeta^\theta_\ell
			+
			\sigma_{k,1}\sigma_{k,2}\|s^{\ell+1}-s^\ell\|\\
			&\quad+
			\frac{\sigma_{k,1}}{\beta_\ell}
			\|(z^{\ell+1},s^{\ell+1})-(z^\ell,s^\ell)\|.
		\end{aligned}
	\end{equation}
	
	By Lemma~\ref{lem10}, the constant can be
	chosen as \(L_{\hat\psi}^{zs}=\sigma_{k,2}\), and the accepted
	backtracking index satisfies $\widehat L_{\ell}^{zs,j_{\ell}^{zs}}\le\max\{\widehat L_{\ell}^{zs,0},\kappa_LL_\beta^*\}$, $L_\beta^*:=\sigma_{k,2}+2\gamma_1(L_R^z)^2+c_\beta$. Along the selected subsequence, \(L_R^z\) and \(\widehat L_{\ell}^{zs,0}\) are bounded. Therefore, there exists \(C_{zs}>0\) such that $1/\beta_\ell=\widehat L_{\ell}^{zs,j_{\ell}^{zs}}+c_\beta	\le	C_{zs}\sigma_{k,2}$. Substituting this estimate and the stopping condition
	\eqref{inner_stop_slack} into \eqref{th3_zs_err_all}, we obtain
	\[
	\|\xi_{zs}^k\|
	\le
	\left(
	\frac{
		2/\gamma_2+M_{g,Z}L_{\theta,z}
	}{
		\sigma_{k,2}M_{z,k}
	}
	+
	\frac{
		1+\gamma_1M_{g,Z}+C_{zs}
	}{
		M_{z,k}
	}
	\right)\tau_k.
	\]
	Therefore the displayed bounds for all terms in \(\xi_{zs}^k\), together with \(\tau_k\to0\), yield \(\xi_{zs}^k\to0\).
	
	Set \(\varepsilon^j:=(\xi_{xy}^{k_j},\xi_{zs}^{k_j})\). Then
	\(\varepsilon^j\to0\). Combining the projected optimality conditions
	\eqref{eq:slack-xy-projected} and \eqref{eq:slack-zs-projected} yields
	\[
	\begin{aligned}
		\varepsilon^j&\in
		\nabla F(x^{k_j+1},y^{k_j+1})
		+
		\sigma_{k_j,1}\nabla \mathcal G_\gamma(x^{k_j+1},y^{k_j+1},z^{k_j+1})+
		\frac{\sigma_{k_j,1}\sigma_{k_j,2}}{2}\nabla \left( \|g(x^{k_j+1}, y^{k_j+1}) + s^{k_j+1}\|^2\right) \\
		&\quad +
		\mathcal N_{\Omega\times\mathcal C}
		(x^{k_j+1},y^{k_j+1},z^{k_j+1},s^{k_j+1}).
	\end{aligned}
	\]
	With these definitions
	\[
	(x^j,y^j,z^j,s^j):=(x^{k_j+1},y^{k_j+1},z^{k_j+1},s^{k_j+1}),
	\quad
	\rho^j:=(\rho_1^j,\rho_2^j)
	:=\left(\sigma_{k_j,1},\frac{\sigma_{k_j,1}\sigma_{k_j,2}}{2}\right),
	\]
	the convergence and feasibility relations established above, together with \(\varepsilon^j\to0\), verify every condition in Definition~\ref{def:two_parameter_approximate_kkt}. Therefore \((\bar x,\bar y)\) satisfies the two-parameter slack-penalty approximate KKT condition.

	It remains to prove the S- and M-stationarity conclusions. Under the additional MFCQ assumptions stated above, the approximate KKT condition just verified allows us to invoke Theorem~\ref{thm:two_parameter_M_stationarity} with \(\rho_1^j=\sigma_{k_j,1}\) and \(\rho_2^j=\sigma_{k_j,1}\sigma_{k_j,2}/2\). If \(\{\sigma_k\}\) is bounded, then both \(\{\rho_1^j\}\) and \(\{\rho_2^j\}\) are bounded, and Theorem~\ref{thm:two_parameter_M_stationarity} yields S-stationarity of \(\bar w\). If \(\sigma_k\to+\infty\), then, along the selected subsequence, \(\rho_1^j\to+\infty\), \(\rho_2^j\to+\infty\), and \(\rho_2^j/\rho_1^j=\sigma_{k_j,2}/2\to+\infty\). Therefore, under MPCC-MFCQ at \(\bar w\), the same theorem yields M-stationarity of \(\bar w\). This proves the stated stationarity conclusions.
\end{proof}

\section{Conclusion}

This paper studied the regularized gap-function reformulation of bilevel optimization, which belongs to the broader class of value-function-type reformulations. Such reformulations are intrinsically degenerate and generally fail to satisfy standard constraint qualifications. Consequently, the multiplier sequence associated with the regularized gap-function constraint may be unbounded, while the limiting stationarity in this regime has received relatively limited attention. We established that the resulting accumulation points are C-stationary for the associated KKT-based MPCC reformulation, provided that MFCQ holds for the upper- and lower-level constraint systems and MPCC-MFCQ holds at the limiting MPCC point. To recover M-stationarity, we introduced a slack-based two-parameter formulation that preserves exact multiplier--slack complementarity. Under the same constraint qualifications, the resulting accumulation points are M-stationary. Finally, we developed an inexact bilevel slack-penalty method and established M-stationarity of its accumulation points under the stated algorithmic assumptions.

\bibliographystyle{plain}
\bibliography{references}

\end{document}